\newcommand{\bbR}{\mathbb{R}}
\newcommand{\bbC}{\mathbb{C}}
\newcommand{\bbZ}{\mathbb{Z}}
\newcommand{\bbN}{\mathbb{N}}
\newcommand{\br}{\bm{r}}
\newcommand{\bh}{\bm{h}}
\newcommand{\bN}{\bm{N}}
\newcommand{\bk}{\bm{k}}
\newcommand{\bB}{\mathbf{B}}
\newcommand{\bI}{\mathbf{I}}
\newcommand{\calB}{\mathcal{B}}
\newcommand{\calS}{\mathcal{S}}
\newcommand{\calP}{\mathcal{P}}
\newcommand{\calM}{\mathcal{M}}
\newcommand{\calJ}{\mathcal{J}}
\newcommand{\calF}{\mathcal{F}}
\newcommand{\calI}{\mathcal{I}}
\newcommand{\hphi}{\hat{\phi}}
\newcommand{\hPhi}{\hat{\Phi}}
\newcommand{\vzero}{\mathbf{0}}
\newcommand{\tE}{\tilde{E}}
\newcommand{\tF}{\tilde{F}}
\newcommand{\tG}{\tilde{G}}
\newcommand{\tg}{\tilde{g}}
\newcommand{\tJ}{\tilde{\mathcal{J}}}
\newcommand{\la}{\langle}
\newcommand{\ra}{\rangle}
\newcommand{\dom}{\mathrm{dom}}
\newcommand{\dist}{\mathrm{dist}}
\DeclareMathOperator{\intdom}{\mathrm{int}\dom}
\newcommand{\cS}{\mathcal{S}}
\newcommand{\bzero}{\mathbf{0}}
\newcommand{\hPsi}{\hat{\Psi}}
\newtheorem{thm}{Theorem}[section]
\newtheorem{remark}[thm]{Remark}
\newtheorem{assumption}[thm]{Assumption}
\newcommand\tbbint{{-\mkern -16mu\int}}
\newcommand\dbbint{{-\mkern -19mu\int}}
\newcommand\bbint{
	{\mathchoice{\dbbint}{\tbbint}{\tbbint}{\tbbint}}
}
\DeclareMathOperator*{\argmin}{\mathrm{argmin}}
\crefname{hypothesis}{Hypothesis}{Hypotheses}
\title{Efficient numerical methods for computing the stationary states of phase
field crystal models
}
\author{Kai Jiang\thanks{School of Mathematics and Computational Science, 
 Hunan Key Laboratory for Computation and Simulation in Science and Engineering,
Xiangtan University, Xiangtan, Hunan, China, 411105.} \and Wei Si\footnotemark[1] \and Chang Chen\footnotemark[1] \and Chenglong Bao\thanks{Corresponding author. Yau Mathematical Sciences Center, Tsinghua University, Beijing, China, 100084.
		(\email{clbao@mail.tsinghua.edu.cn}).}
}
\newcommand*{\addFileDependency}[1]{
  \typeout{(#1)}
  \@addtofilelist{#1}
  \IfFileExists{#1}{}{\typeout{No file #1.}}
}
\begin{document}
	
	\maketitle
	
\begin{abstract}
	Finding the stationary states of a free energy functional is an important
	problem in phase field crystal (PFC) models. Many efforts have been devoted
	for designing numerical schemes with energy dissipation and mass
	conservation properties. However, most existing approaches are
	time-consuming due to the requirement of small effective step sizes. In this
	paper, we discretize the energy functional and propose efficient numerical
	algorithms for solving the constrained non-convex minimization problem.
	A class of gradient based approaches, which is the so-called adaptive accelerated Bregman
	proximal gradient (AA-BPG) methods, is proposed and the convergence property
	is established without the global Lipschitz constant requirements.
	A practical Newton method is also designed to further accelerate the local convergence with
	convergence guarantee.
	One key feature of our algorithms is that the energy dissipation and mass conservation
	properties hold during the iteration process.
	{Moreover, we develop a hybrid acceleration framework to accelerate the AA-BPG methods and most
	of existing approaches through coupling with the practical Newton method.} 
	Extensive numerical experiments, including two three dimensional periodic crystals in
	Landau-Brazovskii (LB) model and a two dimensional quasicrystal in Lifshitz-Petrich (LP) model,
	demonstrate that our approaches have adaptive step sizes which lead to a significant
	acceleration over many existing methods when computing complex structures.
\end{abstract}

\begin{keywords}
	Phase field crystal models, Stationary states, 
    {Adaptive accelerated Bregman proximal gradient methods, Preconditioned conjugate gradient
	method, Hybrid acceleration framework}.
\end{keywords}
 

\section{Introduction}
\label{sec:intr}

The phase field crystal (PFC) model is an important approach to
describe many physical processes and material properties, 
such as the formation of ordered structures,
nucleation process, crystal growth, elastic and plastic
deformations of the lattice, dislocations \cite{chen2002phase, provatas2010phase}. 
More concretely, let the order parameter function be
$\phi(\br)$, the PFC model can be expressed by a free energy functional
\begin{align} 
E(\phi; \Theta) = G(\phi; \Theta) + F(\phi; \Theta),
\label{eq:energy}
\end{align}
where $\Theta$ are the physical parameters, $F[\phi]$ is the bulk
energy with polynomial type or log-type formulation and $G[\phi]$ is the
interaction energy that contains higher-order differential operators to form ordered
structures \cite{brazovskii1975phase, lifshitz1997theoretical,
	swift1977hydrodynamic}.
A typical interaction potential function for a domain $\Omega$ is 
\begin{equation}
G(\phi) = \frac{1}{|\Omega|}\int_\Omega
\Big[\prod_{j=1}^{m}(\Delta+q_j^2)\phi \Big]^2 \,d\br, 
~~ m\in\mathbb{N}
\end{equation}
which can be used to describe the pattern formation of periodic crystals,
quasicrystals and multi-polynary crystals\,\cite{lifshitz1997theoretical,
mkhonta2013exploring}. In order to understand the theory of PFC models as well as 
predict and guide experiments, it requires to find stationary states $\phi_s(\br; \Theta)$ and construct phase diagrams
of the energy functional \cref{eq:energy}.
Denote $V$ to be a feasible space,
the phase diagram is obtained via solving the minimization problem
\begin{align}
\min_{\phi} E(\phi; \Theta), \mbox{ s.t. } \phi\in V,
\label{eq:minEnergy}
\end{align}
with different physical parameters $\Theta$, which brings
the tremendous computational burden.
Therefore, within an appropriate spatial discretization, the goal of this paper is to
develop efficient and robust numerical methods for solving \cref{eq:minEnergy} with
guaranteed convergence while keeping the desired dissipation and conservation
properties during the iterative process.

Most existing numerical methods for computing the stationary states
of PFC models can be classified into two categories. 
One is to solve the steady
nonlinear Euler-Lagrange equations of \cref{eq:minEnergy}
through different spatial discretization approaches.
The other class aims at solving the nonlinear gradient flow equation by using the numerical PDE methods. 
In these approaches, there have been extensive works of energy stable numerical schemes 
for the time-dependent PFC model and{ its various extensions, such as the modified PFC 
(MPFC)\,\cite{wang2011energy,lee2017first,guo2018high} and square PFC (SPFC) models\,\cite{cheng2019energy}.}
Typical energy stable schemes to gradient flows include convex
splitting methods\,\cite{wise2009energy,shin2016first},  and stabilized factor methods in both 
the first and second order temporal accuracy orders\,\cite{shen2010numerical}, {the exponential time 
differencing  schemes\,\cite{du2004stability}, }and recently developed
invariant energy quadrature\,\cite{yang2016linear} and scalar auxiliary variable approaches\,\cite{shen2019new} for a modified energy. 
It is noted that the gradient flow approach is able to describe the
quasi-equilibrium behavior of PFC systems.  
Numerically, the gradient flow is discretized in both
space and time domain via different discretization techniques and the
stationary state is obtained with a proper choice of initialization. 
{Many popular spatial approximations have been used, such as the finite difference 
method\,\cite{wise2009energy,wang2011energy,hu2009stable}, the finite element 
method\,\cite{feng2007analysis,du2008adaptive} and Fourier 
pseudo-spectral  method\,\cite{chen1998applications,jiang2014numerical,cheng2019energy}.}

Under an appropriate spatial
discretization scheme, the infinite dimensional problem
\cref{eq:minEnergy} can be formulated as a 
minimization problem in a finite dimensional space.
Thus, there may exist alternative numerical methods that can
converge to the steady states quickly
by using modern optimization techniques. For example, similar ideas have
shown success in computing steady states of 
the Bose-Einstein condensate \cite{wu2017regularized} and the calculation of
density functional theory \cite{ulbrich2015proximal,liu2015analysis}. In this
paper, in order to keep the mass conservation property, an additional constraint
is imposed in \cref{eq:minEnergy} and the detail will be given in the next
section. Inspired by the recent advances of gradient based methods which have been
successfully applied in image processing and machine learning, we propose an adaptive
accelerated Bregman proximal gradient (AA-BPG) method for
computing the stationary states of \cref{eq:minEnergy}. In each iteration, the
AA-BPG method updates the estimation of the order parameter function by solving
linear equations which have closed form when using the pseudo-spectral
discretization and chooses step sizes by using the line search algorithm
initialized with the Barzilai-Borwein (BB) method \cite{barzilai1988two}.  Meanwhile, a
restart scheme is proposed such that the iterations satisfy energy dissipation
property and it is proved that the generated sequence converges to a stationary
point of \cref{eq:minEnergy} without the assumption of the existence of global
Lipschitz constant of the bulk energy $F$. Moreover, an  regularized Newton
method is applied for further accelerating the local convergence. More specifically, an 
preconditioned conjugate gradient method is designed for solving the regularized Newton 
system efficiently. Extensive numerical experiments have demonstrated that our approach can 
quickly reach the vicinity of an optimal solution with moderately accuracy, even for very challenge cases. 

The rest of this paper is organized as follows. 
In \cref{sec:discretization}, we present the PFC models considered in
this paper, and the projection method discretization. 
In \cref{sec:first-order}, we present the AA-BPG method for solving the
constrained non-convex optimization
with proved convergence. 
In \cref{sec:applicationPFC}, two choices of Bregman distance are proposed and applied for the PFC problems.
In \cref{sec:second-order}, we design a practice Newton preconditioned conjugate gradient 
(Newton-PCG) method with gradient convergence guarantee.  Then,
a hybrid acceleration framework is proposed to
further accelerate the calculation. Numerical results are reported in
\cref{sec:result} to illustrate the efficiency and accuracy of our
algorithms. Finally, some concluding remarks are given in \cref{sec:conclusion}.

\subsection{Notations and definitions}
{
Let $ C^k $ be  the set of  $ k $-th continuously differentiable functions on the whole space.
The domain of a real-valued  function $ f$ is defined as $ \dom
f:=\{x:f(x)<+\infty\} $. We say $ f $ is proper if $ f > -\infty $ and $  \dom f\neq
\emptyset$.  For $ \alpha\in\bbR $, let $  [f \leq \alpha]:= \{x : f(x) \leq \alpha\}$
be the $ \alpha $-(sub)level set of $ f $. We say that $ f $ is level  bounded
if $ [f\leq \alpha] $ is bounded for all $ \alpha\in \bbR$.  $ f $ is lower
semicontinuous if all level sets of $ f $ are closed. For a proper function $ f $,  the subgradient\,\cite{brezis2010functional} of $ f $
at $ x \in \dom f $ is defined as $ \partial f(x) = \{u : f(y) - f(x) -
\langle u, y- x\rangle \geq 0, \forall\, y \in \dom f\} $. A point $x$ is called a stationary point of $f$ if 
$0\in\partial f(x)$.
}
\section{Problem formulation}
\label{sec:discretization}

\subsection{Physical models} 
Two classes of PFC models are considered in the paper. The first
one is the Landau-Brazovskii (LB) model which can characterize 
the phase and phase transitions of periodic
crystals\,\cite{brazovskii1975phase}.
It has been discovered
in many different scientific fields, e.g., polymeric 
materials\,\cite{shi1996theory}. 
In particular, the energy functional of LB model is 
\begin{align}
E_{LB}(\phi) = \frac{1}{|\Omega|}\int_\Omega
\left\{\underbrace{\frac{\xi^2}{2}[(\Delta +1)\phi]^2}_{G(\phi)} + \underbrace{\frac{\tau}{2!}\phi^2 
-\frac{\gamma}{3!}\phi^3 + \frac{1}{4!}\phi^4}_{F(\phi)} \right\}\,
d \br,
\label{eq:LB}
\end{align}
where $\phi(\br)$ is a real-valued function which measures the
order of system in terms of order parameter.
$\Omega$ is the bounded domain of system, $\xi$ is the bare
correlation length, $\tau$ is the dimensionless reduced
temperature, $\gamma$ is phenomenological coefficient.  
Compared with double-well bulk energy\,\cite{swift1977hydrodynamic},
the cubic term in the LB functional helps us study
the first-order phase transition. 

The second one is the Lifshitz-Petrich (LP) model that can
simulate quasiperiodic structures, such as the bi-frequency 
excited Faraday wave\,\cite{lifshitz1997theoretical}, and 
explain the stability of soft-matter
quasicrystals\,\cite{lifshitz2007soft, jiang2015stability}. 
Since quasiperiodic structures are space-filling without decay, it is necessary to
define the average spacial integral over the whole space as 
$ \bbint = \lim_{R\rightarrow \infty}\frac{1}{|B_R|}\int_{B_R}, $
where $B_R\subset\bbR^d$ is the ball centred at origin with radii $R$.
The energy functional of LP model is given by 
\begin{align}
E_{LP}(\phi) = \bbint
\left\{\underbrace{\frac{c}{2}[(\Delta +q_1^2)(\Delta + q_2^2)\phi]^2}_{G(\phi)} +\underbrace{
\frac{\varepsilon}{2}\phi^2 
-\frac{\kappa}{3}\phi^3 + \frac{1}{4}\phi^4}_{F(\phi)} \right\}\,
d \br,
\label{eq:LP}
\end{align}
where $c$ is the energy penalty, $\varepsilon$ and $\kappa$ are
phenomenological coefficients. 

Furthermore, we impose the following mean zero condition of order parameter on
LB and LP systems to ensure the mass conservation, respectively.
\begin{equation}
\label{mass-conservation}
	\dfrac{1}{|\Omega|}\int_{\Omega} \phi(\br) d\br = 0 \quad\text{ or }\quad  \bbint \phi(\br) d\br=0.
\end{equation}
The equality constraint condition is from the definition of the order parameter which
is the deviation from average density.

\subsection{Projection method discretization}
\label{subsec:PM}
In this section, we introduce the projection method \cite{jiang2014numerical}, a high
dimensional interpretation approach which can avoid the Diophantine approximation
error in computing quasiperiodic systems, to discretize the LB and LP energy
functional. It is noted that the stationary states in LB model is periodic, and thus
it can be discretized by the Fourier pseudo-spectral method which is a  special case
of projection method. 
Therefore, we only consider the projection method discretization of the LP model
\cref{eq:LP}.
We immediately have the following orthonormal property in the average spacial
integral sense
\begin{align}
\bbint e^{i\bk \cdot \br} e^{-i\bk' \cdot \br}\,d\br =
\delta_{\bk \bk'}, ~~~ \forall \bk, \bk' \in \bbR^d.
\label{eq:AP:orth}
\end{align}
For a quasiperiodic function, we can define the Bohr-Fourier transformation as \cite{katznelson2004anintroduction}
\begin{align}
\hphi(\bk) = \bbint \phi(\br)e^{-i\bk\cdot \br}\,d\br,~~
\bk\in\bbR^d.
\end{align}
In this paper, we carry out the above computation in a higher
dimension using the projection method which
is based on the fact that a $d$-dimensional
quasicrystal can be embedded into an $n$-dimensional periodic
structure ($n \geqslant d$)\,\cite{hiller1985crystallographic}. 
The dimension $n$ is the number of linearly independent numbers over the rational number field.
Using the projection method, the order parameter $\phi(\br)$ can be expressed as
\begin{equation}
\phi(\br) = \sum_{\bh\in\bbZ^n} \hphi(\bh)
e^{i[(\mathcal{P}\cdot\mathbf{B}\bh)^\top\cdot\br]},
~~\br\in\mathbb{R}^d,
\label{eq:pm}
\end{equation}
where $\mathbf{B}\in\bbR^{n\times n}$ is invertible, related to the $n$-dimensional
primitive reciprocal lattice. The corresponding computational domain in physical
space is $2\pi\bB^{-T}\tau$, $\tau\in[0,1)^n$. 
The projection matrix $\mathcal{P}\in\bbR^{d\times n}$ depends
on the property of quasicrystals, such as rotational symmetry \cite{hiller1985crystallographic}. 
If consider periodic crystals, the projection matrix becomes the $d$-order identity
matrix, then the projection reduces to the common Fourier spectral method.
The Fourier coefficient $\hphi(\bh)$ satisfies
\begin{align}
X := \left\{(\hphi(\bh))_{\bh\in\bbZ^n}:
\hphi(\bh)\in\mathbb{C}, ~
\sum_{\bh\in\bbZ^n}|\hphi(\bh)|<\infty \right\}.
\end{align}
In practice,  let
$\bN=(N_1, N_2, \dots, N_n)\in \bbN^n$, and 
\begin{align} 
X_{\bN} := \{\hphi(\bh)\in X:  \hphi(\bh) = 0, 
~\mbox{for
	all}~ |h_j|> N_j/2, ~ j=1,2,\dots,n \}.  
\end{align}
The number of elements in the set is $N=(N_1+1)(N_2+1) \cdots (N_n+1)$. 
Together with \cref{eq:AP:orth} and \cref{eq:pm}, the discretized energy function
\cref{eq:LP} is 
\begin{equation}
	E_{\bh}(\hPhi) = G_{\bh}(\hPhi) + F_{\bh}(\hPhi),
\end{equation}
where $G_h$ and $F_h$ are the discretized interaction
and bulk energies:
\begin{equation}
\begin{aligned}
	G_{\bh}(\hPhi) & = \frac{c}{2}\sum_{\bh_1 + \bh_2 = 0}
\left[q_1^2-(\mathcal{P}\mathbf{B}\bh)^\top (\mathcal{P}\mathbf{B}\bh)\right]^2
\left[q_2^2-(\mathcal{P}\mathbf{B}\bh)^\top (\mathcal{P}\mathbf{B}\bh)\right]^2
\hphi(\bh_1)\hphi(\bh_2) \\
F_{\bh}(\hPhi) & = \frac{\varepsilon}{2}\sum_{\bh_1+\bh_2={\bm 0}}\hphi(\bh_1)\hphi(\bh_2) 
-\frac{\kappa}{3}\sum_{\bh_1+\bh_2+\bh_3={\bm 0}}\hphi(\bh_1)\hphi(\bh_2)\hphi(\bh_3) 
\\
&
+\frac{1}{4}\sum_{\bh_1+\bh_2+\bh_3+\bh_4={\bm
		0}}\hphi(\bh_1)\hphi(\bh_2)\hphi(\bh_3)\hphi(\bh_4),
\end{aligned}
\label{eq:LPfinite}
\end{equation}
and $\bh_j\in\bbZ^n$, , $\hphi_j\in X_{\bN}$, $j=1,2,\dots,4$,
$\hat{\Phi}=(\hphi_1, \hphi_2, \dots, \hphi_N)\in\bbC^{N}$. 
It is clear that the nonlinear terms in $F_h$ are
$n$-dimensional convolutions in the reciprocal space. A direct
evaluation of these convolution terms is extremely expensive.
Instead, these terms are simple multiplication in the
$n$-dimensional physical space. Similar to the pseudo-spectral approach, these
convolutions can be efficiently calculated through FFT. Moreover,
the mass conservation constraint \cref{mass-conservation} is discretized as 
\begin{equation}
e_1^\top \hPhi = 0,
\end{equation} 
where $e_1=(1,0,\ldots,0)^\top \in\bbR^N$. Therefore, we obtain  the following finite
dimensional minimization problem
\begin{equation}\label{min:finite}
\min_{\hPhi\in \mathbb{C}^{N}} 
E_{\bh}(\hPhi) = G_{\bh}(\hPhi) + F_{\bh}(\hPhi), \mbox{ s.t. } e_1^\top \hPhi = 0.
\end{equation}
For simplicity, we omit the subscription in $G_{\bh}$ and $F_{\bh}$ in the following context.
According to \cref{eq:LPfinite}, denoting $ \calF_N\in\mathbb{C}^{ N\times N} $ as the
discretized Fourier transformation matrix, we have
\begin{align}
 \nabla G(\hPhi) = D\hPhi, & \quad \nabla F(\hPhi) = \calF_N^{-1}\Lambda\calF_N\hPhi\label{eq:Grad} \\
\nabla^2 G(\hPhi) = D, &\quad \nabla^2 F(\hPhi) = \calF_N^{-1}\Lambda^{(')}\calF_N, \label{eq:Hessian}
\end{align}
where $ D $ is a diagonal matrix with nonnegative entries $
c\left[q_1^2-(\mathcal{P}\mathbf{B}\bh)^\top (\mathcal{P}\mathbf{B}\bh)\right]^2\times \\
\left[q_2^2-(\mathcal{P}\mathbf{B}\bh)^\top (\mathcal{P}\mathbf{B}\bh)\right]^2 $ and 
$ \Lambda,\Lambda^{(')}\in\mathbb{R}^{ N\times N} $ are also  diagonal matrices but related to $ \hPhi $.
In the next section, we propose the adaptive accelerated Bregman proximal gradient (AA-BPG) 
method for solving the constrained minimization problem \cref{min:finite}.

\section{The AA-BPG method}\label{sec:first-order}

Consider the minimization problem that has the form
\begin{equation}\label{GeneralFormulation}
\min_x E(x) = f(x) + g(x),
\end{equation}
where $f\in C^2$ is proper but non-convex and $g$ is proper, lower
semi-continuous and convex. Let the domain of $E$ to be $\dom E
=\{x~|~E(x)<+\infty\}$, we make the following assumptions.
\begin{assumption}\label{assum1}
	$ E $ is bounded below and for any $ x^0 \in\dom E$, the sub-level set $ \calM(x^0): = \{x| E(x)\leq E(x^0)\} $ is compact.
\end{assumption}
Let $h$ be a strongly convex function such that $\dom h\subset\dom f$ and $\dom
g\,\cap \,\intdom h\neq\emptyset$.  Then, it induces the \emph{Bregman divergence}\,\cite{bregman1967relaxation} defined as 
	\begin{align}\label{eqn:bregmandiv}
	D_h(x, y) = h(x)-h(y) - \langle \nabla h(y), x-y\rangle,~\forall (x,y)\in\dom h \times \intdom h.
	\end{align}
	It is noted that $D_h(x,y)\geq0$ and $D_h(x,y)=0$ if and only if $x=y$ due to the
	strongly convexity of $h$. Furthermore, $ D_h(x,\bar{x}) \to 0$ as $ x\to \bar{x}
	$. In recent years, Bregman distance based proximal methods \cite{Heinz,bolte2018first} have 
	been proposed and applied  for solving the \cref{GeneralFormulation} in a general non-convex 
	setting \cite{Bregman}. Basically, given the current estimation $x^k\in\intdom h$ and step 
	size $\alpha_k>0$, it updates $x^{k+1}$ via 
	\begin{equation}\label{Iter:Breg}
	x^{k+1} = \argmin_x \left\{g(x)+\langle x-x^k,\nabla f(x^k)\rangle + \dfrac{1}{\alpha_k} D_h(x,x^k)\right\}.
	\end{equation}
	Under suitable assumptions, it is proved in \cite{Bregman} that the iterates
	$\{x^k\}$ has similar convergence property as the traditional proximal gradient
	method \cite{beck2009fast} while iteration \cref{Iter:Breg} does not require the
	Lipschitz condition on $\nabla f$. Motivated by the Nesterov acceleration
	technique \cite{tseng2008accelerated,beck2009fast}, we add an extrapolation step
	before \cref{Iter:Breg} and thus the iterate becomes
	\begin{equation}\label{Iter:ABreg}
	\begin{aligned}
	y^k & = x^k + w_k(x^k-x^{k-1}), \\
	x^{k+1} &= \argmin_x \left\{g(x)+\langle x-y^k,\nabla f(y^k)\rangle + \dfrac{1}{\alpha_k} D_h(x,y^k)\right\},
	\end{aligned}
	\end{equation}
	where $w_k\in[0,\bar{w}]$. It is noted that the minimization problems in
	\cref{Iter:Breg} and \cref{Iter:ABreg} are well defined and single valued
	as $g$ is convex  and $h$ is strongly convex.
	Although the extrapolation step accelerates the convergence in some cases, it may generate the
	oscillating phenomenon of the objective value $E(x)$ that slows down the
	convergence~\cite{o2015adaptive}. Therefore, we propose a restart algorithm that
	leads to a convergent algorithm for solving \cref{GeneralFormulation} with energy dissipation property. Given $\alpha_k>0$, define
	\begin{equation}\label{BProxsubprob}
	z^{k} = \argmin_x \left\{g(x)+\langle x-y^k,\nabla f(y^k)\rangle + \dfrac{1}{\alpha_k} D_h(x,y^k)\right\},
	\end{equation}
	we reset $w_k=0$ if the following does not hold
	\begin{equation}\label{criterion:line_search}
	E(x^k)-E(z^k)\geq c\|x^k-x^{k+1}\|^2 
	\end{equation}
	for some constant $c>0$.  In the next section, we will show that
	\cref{criterion:line_search} holds when $w_k=0$. Overall, the AA-BPG
	algorithm is presented in \cref{alg:ABPG}.
	
	\noindent{\bf Step size estimation.}
		 In each step, $\alpha_k$ is chosen adaptively by backtracking linear search method  which is initialized by the BB step\,\cite{barzilai1988two} estimation, i.e.
	\begin{align}\label{BBSteps}
	\alpha_k = \dfrac{\langle s_k, s_k\rangle}{\langle s_k, v_k \rangle}\text{ or } \dfrac{\langle v_k, s_k\rangle}{\langle v_k, v_k\rangle},
	\end{align}
	where $ s_k = x^k - x^{k-1} $ and $ v_k = \nabla f(x^k) - \nabla f(x^{k-1}) $.
	Let $\eta>0$ be a small constant and $z^k$ be obtained from \cref{BProxsubprob}, we adopt the step size $\alpha_k$ whenever the following inequality holds
	\begin{equation}\label{condition:linesearch}
	E(y^k)-E(z^{k})\geq\eta\|y^k-z^{k}\|^2.
	\end{equation}
	The detailed estimation method is presented in \cref{alg:esalpha}.
	\begin{algorithm}[!pbht]
		\caption{AA-BPG Algorithm}
		\label{alg:ABPG}
		\begin{algorithmic}[1]
			\REQUIRE $x^1=x^0$, $ \alpha_0 >0 $, $w_0=0$, $\rho\in (0,1)$, $ \eta,c,\bar w>0$ and $k = 1$.
			\WHILE {the stop criterion is not satisfied}
			\STATE Update $y^k = x^k+w^k(x^k-x^{k-1})$
			\STATE Estimate $ \alpha_k $ by \cref{alg:esalpha}
			\STATE Calculate $z^k$ via \cref{BProxsubprob}
			\IF {\cref{criterion:line_search} holds}
			\STATE $ x^{k+1} = z^k $ and update $w_{k+1}\in [0,\bar w]$.
			\ELSE
			\STATE $ x^{k+1} = x^k $ and reset $w_{k+1}=0$.
			\ENDIF	
			\STATE $ k =k+1 $.
			\ENDWHILE	
		\end{algorithmic}
	\end{algorithm}

\begin{algorithm}[!pbht]
	\caption{Estimation of $\alpha_k$ at $y^k$}
	\label{alg:esalpha}
	\begin{algorithmic}[1]
		\REQUIRE $x^k$, $y^k$, $\eta>0$ and $\rho\in(0,1)$ and $\alpha_{\min},\alpha_{\max}>0$
		\STATE Initialize $ \alpha_{k} $ by BB step \cref{BBSteps}.
		\FOR{$j=1,2\ldots$}
		\STATE Calculate $z^k$ via \cref{BProxsubprob}
		\IF{\cref{condition:linesearch} holds or $\alpha_k<\alpha_{\min}$}
		\STATE {\bf break}
		\ELSE
		\STATE $\alpha_k = \rho \alpha_k$
		\ENDIF 
		\ENDFOR
		\STATE Output $\alpha_k=\max(\min(\alpha_k,\alpha_{\max}),\alpha_{\min})$.
	\end{algorithmic}
\end{algorithm}

\subsection{Convergence analysis}
In this section, we focus on the convergence analysis of the proposed AA-BPG method. Before proceeding, 
we introduce a significant definition used in analysis.

\begin{definition}\label{def:relsmooth}
A function $ f\in C^2 $ is $R_f$-relative smooth if there exists a strongly convex function $ h\in C^2 $ such that 
\begin{align}\label{eqn:relsmooth}
R_f \nabla^2 h(x) - \nabla^2 f(x) \succeq 0,\quad \forall x\in\intdom h.
\end{align}
\end{definition}
Throughout this section, we impose the next assumption on $ f $.
\begin{remark}
    {
	If $ h = \|\cdot\|^2/2 $, the relative smoothness becomes the Lipschitz
	smoothness.
	}
\end{remark}
\begin{assumption}\label{assum2}
	There exists $R_f>0$ such that $f$ is $ R_f $-relative smooth with respect to a strongly convex function $ h\in C^2 $.
\end{assumption}
\begin{remark}
    {
    In the LB model \cref{eq:LB} and LP model \cref{eq:LP},
	their bulk energies are fourth degree polynomials and  its gradient are not Lipschitz continuous.
	However, we will show that relative smoothness constant $R_f$ can be $O(1)$ through appropriately choosing the strongly convex function
	$h$ in \cref{PFCproperty2}.
	}
\end{remark}

\subsection{Convergence property}

In this subsection, {we will prove the convergence property of the \cref{alg:ABPG}. The outline of the proof is given in \cref{fig:ProofProcess}.}
\begin{figure}[!htbp]
	\centering
	\includegraphics[width=5.1in]{./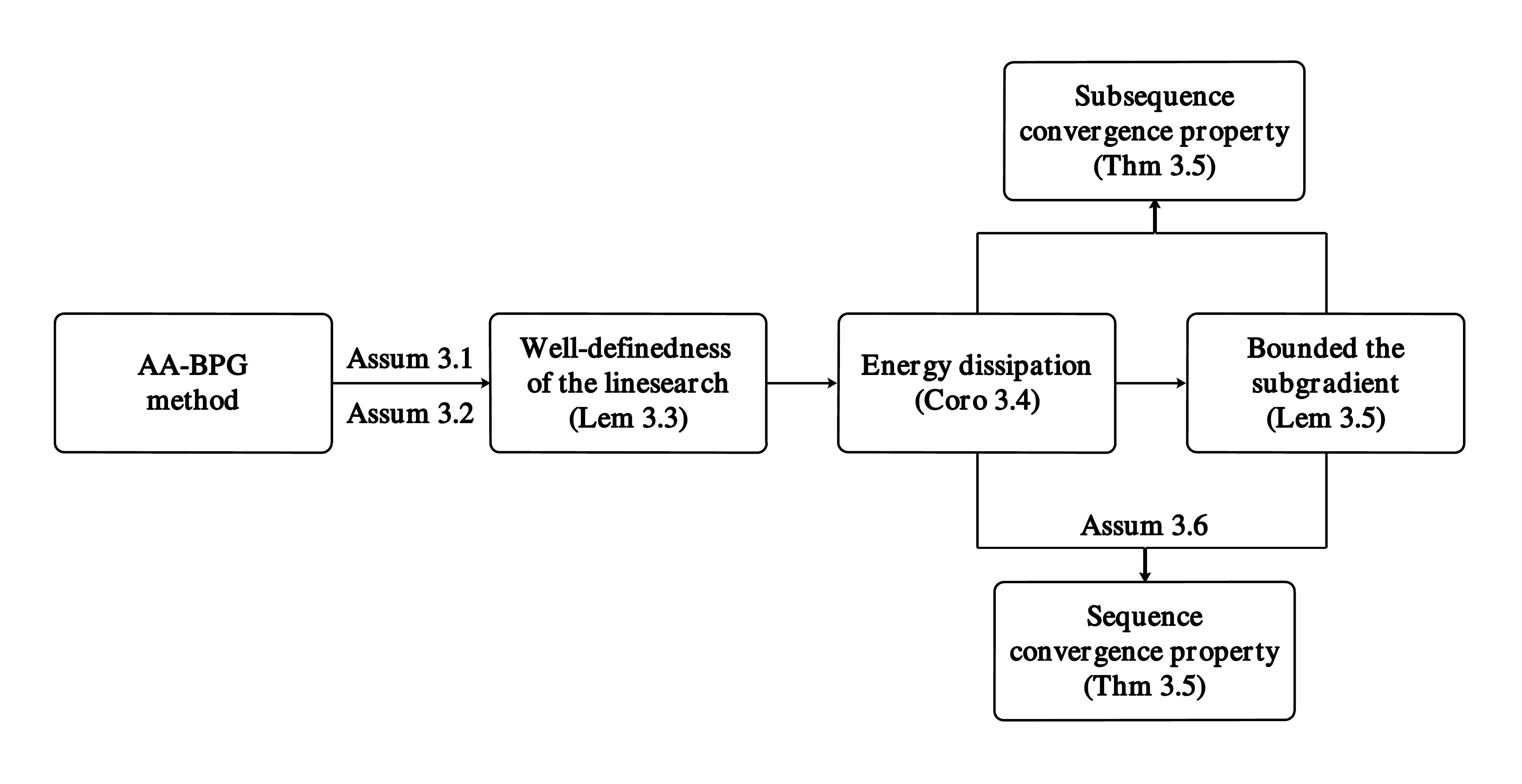}
	\caption{{The flow chart of the convergence proof of \cref{alg:ABPG}.}
	}
	\label{fig:ProofProcess}
\end{figure}
Under the \cref{assum2},  we have the following useful lemma as stated in \cite{Heinz}.
 \begin{lemma}[\cite{Heinz}]
	\label{lem:rel-sm-Breg}
	If $ f $ is $ R_f $-relative smooth with respect to $ h $, then 
	\begin{align}\label{relatively-bounded}
	f(x)-f(y) - \langle \nabla f(y), x-y\rangle \leq R_f D_h(x,y), \quad \forall x, y\in\intdom h.
	\end{align}
 \end{lemma}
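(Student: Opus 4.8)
The plan is to reduce the claimed functional inequality \cref{relatively-bounded} to the elementary fact that a $C^2$ function with everywhere positive semidefinite Hessian on a convex set lies above its tangent hyperplanes. First I would introduce the auxiliary function $\psi := R_f h - f$, which is $C^2$ on $\intdom h$ by \cref{assum2}. The relative smoothness condition \cref{eqn:relsmooth} states precisely that
\[
\nabla^2 \psi(x) = R_f \nabla^2 h(x) - \nabla^2 f(x) \succeq 0, \quad \forall x \in \intdom h,
\]
so $\psi$ is convex on $\intdom h$.

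Next, since $\intdom h$ is convex (being the interior of the domain of the convex function $h$), for any $x, y \in \intdom h$ the entire segment $\{y + t(x-y) : t \in [0,1]\}$ remains in $\intdom h$, and the first-order characterization of convexity yields
\[
\psi(x) - \psi(y) - \langle \nabla \psi(y), x - y \rangle \geq 0.
\]
Substituting $\psi = R_f h - f$, using $\nabla \psi = R_f \nabla h - \nabla f$, and collecting the $h$-terms into the Bregman divergence \cref{eqn:bregmandiv} rearranges this into
\[
R_f D_h(x,y) - \bigl[ f(x) - f(y) - \langle \nabla f(y), x - y \rangle \bigr] \geq 0,
\]
which is exactly \cref{relatively-bounded}.

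An equivalent route, if one prefers not to invoke the convexity characterization as a black box, proceeds through Taylor's theorem with integral remainder: both $f(x) - f(y) - \langle \nabla f(y), x-y\rangle$ and $D_h(x,y)$ can be written as $\int_0^1 (1-t)\, \langle \nabla^2(\cdot)(y + t(x-y))\,(x-y),\, x-y \rangle \, dt$ with $\nabla^2 f$ and $\nabla^2 h$ respectively. The pointwise Hessian inequality \cref{eqn:relsmooth}, integrated against the nonnegative weight $(1-t)$, then transfers directly to the two integrals and delivers the bound.

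I do not anticipate a genuine obstacle here; the only point needing care is the domain geometry---verifying that $\intdom h$ is convex so that the segment joining $x$ and $y$ stays inside the region where \cref{eqn:relsmooth} is assumed---which follows from the (strong) convexity of $h$. The statement is in essence a restatement of relative smoothness in integrated form, so the argument is short and all of its content resides in \cref{assum2}.
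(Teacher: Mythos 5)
Your proof is correct; the paper itself offers no proof of \cref{lem:rel-sm-Breg}, citing it directly from \cite{Heinz}, and your argument---deducing convexity of $\psi = R_f h - f$ from the Hessian condition \cref{eqn:relsmooth} and then applying the first-order convexity inequality---is exactly the standard derivation in that reference. Your attention to the one delicate point, the convexity of $\intdom h$ (which holds since $h$ is convex and interiors of convex sets are convex), is also correct.
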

Based on the above Lemma, the descent property of the iteration generated by Bregman
proximal operator \cref{BProxsubprob} is established as follows.
\begin{lemma}\label{lem:BPGsuffdesc}
Let $\alpha>0$ and suppose the  \cref{assum2} holds. If 
	\begin{equation}\label{BProx}
z = \argmin_x \left\{g(x)+\langle x-y,\nabla f(y)\rangle + \dfrac{1}{\alpha} D_h(x,y)\right\},
\end{equation}
then there  exists some $\sigma>0$ such that  
\begin{align}\label{BPGsuffDec}
E(y) - E(z)\geq  \left(\dfrac{1}{\alpha} - R_f\right)\frac{\sigma}{2}\|z-y\|^2.
\end{align}
\end{lemma}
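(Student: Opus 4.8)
The plan is to read off the first-order optimality condition of the subproblem \cref{BProx}, combine it with the convexity of $g$, then apply the relative-smoothness inequality from \cref{lem:rel-sm-Breg}, and finally use the strong convexity of $h$.

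First I would write the objective of \cref{BProx} as $\psi(x) = g(x) + \langle x-y, \nabla f(y)\rangle + \frac{1}{\alpha}D_h(x,y)$. Since $g$ is convex and $h$ is strongly convex with $h\in C^2$, $\psi$ is strongly convex, so $z$ is its unique minimizer and $0\in\partial\psi(z)$. Because $D_h(\cdot,y)$ is differentiable on $\intdom h$ with gradient $\nabla h(\cdot)-\nabla h(y)$, this yields a subgradient $\xi\in\partial g(z)$ satisfying $\xi + \nabla f(y) + \frac{1}{\alpha}\big(\nabla h(z)-\nabla h(y)\big)=0$.

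Next, convexity of $g$ gives $g(y)-g(z)\geq \langle \xi, y-z\rangle$. Substituting $\xi = -\nabla f(y) - \frac{1}{\alpha}(\nabla h(z)-\nabla h(y))$ and using the three-point identity $\langle \nabla h(z)-\nabla h(y), z-y\rangle = D_h(z,y)+D_h(y,z)$ produces a lower bound on $g(y)-g(z)$ in terms of $\langle \nabla f(y), z-y\rangle$ and the two Bregman terms. In parallel, I apply \cref{lem:rel-sm-Breg} to the pair $(z,y)$ to get $f(y)-f(z)\geq -\langle \nabla f(y), z-y\rangle - R_f D_h(z,y)$. Adding the two bounds cancels the inner-product terms and gives $E(y)-E(z)\geq \big(\frac{1}{\alpha}-R_f\big)D_h(z,y) + \frac{1}{\alpha}D_h(y,z)$.

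To conclude, I drop the nonnegative term $\frac{1}{\alpha}D_h(y,z)\geq 0$ and invoke the $\sigma$-strong convexity of $h$, which yields $D_h(z,y)\geq \frac{\sigma}{2}\|z-y\|^2$, arriving at \cref{BPGsuffDec}. The main obstacle is the bookkeeping around the sign of $\frac{1}{\alpha}-R_f$: the strong-convexity bound feeds directly into the claimed estimate only when this factor is nonnegative, which is precisely the regime enforced by the line search \cref{condition:linesearch}; for the fully general statement one must be careful either to retain the extra $\frac{1}{\alpha}D_h(y,z)$ term or to restrict to $\alpha\leq 1/R_f$. Making the optimality condition rigorous — existence of the subgradient $\xi$ and differentiability of $D_h(\cdot,y)$ on $\intdom h$, together with the requirement $z\in\intdom h$ — is the other point that needs care.
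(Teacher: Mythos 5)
Your proof is correct and reaches the same conclusion through the same two key ingredients as the paper (\cref{lem:rel-sm-Breg} and the $\sigma$-strong convexity of $h$), but it exploits the optimality of $z$ differently. The paper never writes down the first-order optimality condition: it simply compares the subproblem objective at the test point $x=y$, where the linear and Bregman terms vanish, with its value at the minimizer $x=z$, giving $E(y)\geq f(y)+\langle\nabla f(y),z-y\rangle+\frac{1}{\alpha}D_h(z,y)+g(z)$ directly, and then applies \cref{relatively-bounded} and \cref{strongconvex} exactly as you do. You instead extract a subgradient $\xi\in\partial g(z)$ from $0\in\partial\psi(z)$, use convexity of $g$ at the pair $(y,z)$, and invoke the three-point identity $\langle\nabla h(z)-\nabla h(y),z-y\rangle=D_h(z,y)+D_h(y,z)$; this yields the strictly stronger intermediate estimate $E(y)-E(z)\geq\left(\frac{1}{\alpha}-R_f\right)D_h(z,y)+\frac{1}{\alpha}D_h(y,z)$, of which the paper's bound is the special case obtained by discarding $\frac{1}{\alpha}D_h(y,z)\geq 0$. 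The paper's route buys economy: no subgradient sum rule, no differentiability of $D_h(\cdot,y)$, and no separate verification that $z\in\intdom h$ is needed, since only function values of the objective are compared. Your route buys the extra nonnegative Bregman term (the standard three-point, Chen--Teboulle-style estimate), which can matter for sharper rate analyses even though it is discarded here. Your caveat about the sign of $\frac{1}{\alpha}-R_f$ is well taken and is not a defect of your argument relative to the paper's: the final step $\left(\frac{1}{\alpha}-R_f\right)D_h(z,y)\geq\left(\frac{1}{\alpha}-R_f\right)\frac{\sigma}{2}\|z-y\|^2$ in the paper's own proof likewise requires $\alpha\leq 1/R_f$, which is implicit in the statement and is the regime actually enforced in the sequel via \cref{alpha:range}.
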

\begin{proof}
Since $h$ is strongly convex, there exists some constant $\sigma>0$ such that $h(x)-\sigma\|x\|^2/2$ is convex. 
Then, $\nabla^2 h(x)-\sigma I\succeq 0$ and we have
\begin{equation}\label{strongconvex}
D_h(z,x) = h(z) - h(y) - \langle \nabla h(y), z-y\rangle\geq \frac{\sigma}{2} \|z-y\|^2.
\end{equation}
From  the optimal condition of \cref{BProx}, we have
\begin{align*}
E(y) &= f(y) + g(y) = \left[f(y) + \langle \nabla f(y), x - y\rangle + \dfrac{1}{\alpha}D_h(x,y) + g(x) \right]_{x = y}\\
& \geq f(y) + \langle \nabla f(y), z - y\rangle + \dfrac{1}{\alpha}D_h(z, y) + g(z)\\
&\geq f(z) - R_f D_h(z,y)+ \dfrac{1}{\alpha}D_h(z, y) + g(z)\\
& = E(z) + \left(\dfrac{1}{\alpha} - R_f \right)D_h(z, y) \geq E(z) +
\left(\dfrac{1}{\alpha} - R_f \right)\dfrac{\sigma}{2}\|z-y\|^2,
\end{align*}
where the second inequality is from \cref{relatively-bounded} and the last inequality
follows from \cref{strongconvex}. 
\end{proof}
\begin{remark}
\cref{lem:BPGsuffdesc} 
shows that the non-restart condition \cref{criterion:line_search} and the
linear search condition \cref{condition:linesearch} are satisfied when 
\begin{equation}\label{alpha:range}
0<\alpha<\bar\alpha:=\min\left(\frac{1}{2c/\sigma+R_f},\frac{1}{2\eta/\sigma+R_f}\right) \mbox{ and } 0<\alpha_{\min}\leq\bar\alpha
\end{equation}
Therefore, the line search in \cref{alg:esalpha} stops in finite iterations, and thus the \cref{alg:ABPG} is  well defined.
\end{remark}
In the following analysis, we always assume that the parameter $\alpha_{\min}$
satisfies \cref{alpha:range} for simplicity. Therefore, we can obtain the
sufficient decrease property of the sequence generated by \cref{alg:ABPG}.
\begin{corollary}\label{coro:suff}
Suppose the  \cref{assum1} and \cref{assum2} hold. Let $\{x^k\}$ be the sequence generated by the \cref{alg:ABPG}. 
Then, $\{x^k\}\subset\mathcal{M}(x^0)$ and 
\begin{equation}\label{ABPGsuffDec}
E(x^k)-E(x^{k+1}) \geq c_0 \|x^k-x^{k+1}\|^2,
\end{equation}
where $c_0=\min(c,\eta)$.
\end{corollary}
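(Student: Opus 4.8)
The plan is to split the analysis into the two cases that arise from the restart mechanism in \cref{alg:ABPG} and to verify the claimed sufficient-decrease inequality \cref{ABPGsuffDec} in each case, then to deduce the containment $\{x^k\}\subset\mathcal{M}(x^0)$ as a consequence. Fix an iteration index $k$ and look at how $x^{k+1}$ is produced. There are exactly two branches: either the non-restart condition \cref{criterion:line_search} holds, in which case $x^{k+1}=z^k$; or it fails, in which case the algorithm resets $w_{k+1}=0$ and sets $x^{k+1}=x^k$.

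In the first branch, \cref{criterion:line_search} directly gives
\begin{equation*}
E(x^k)-E(z^k)\geq c\|x^k-x^{k+1}\|^2,
\end{equation*}
and since $x^{k+1}=z^k$ this is exactly $E(x^k)-E(x^{k+1})\geq c\|x^k-x^{k+1}\|^2$. In the second branch $x^{k+1}=x^k$, so both sides of \cref{ABPGsuffDec} vanish and the inequality holds trivially. In either case the decrease is bounded below by $\min(c,\eta)\|x^k-x^{k+1}\|^2=c_0\|x^k-x^{k+1}\|^2$; the constant $\eta$ enters because, as noted in the remark following \cref{lem:BPGsuffdesc}, the line search \cref{condition:linesearch} controls the decrease through \cref{lem:BPGsuffdesc} and guarantees \cref{criterion:line_search} whenever $\alpha_k$ lies in the admissible range \cref{alpha:range}, so the worst-case coefficient among the two acceptance criteria is $\min(c,\eta)$.

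Having \cref{ABPGsuffDec} for every $k$ shows that $\{E(x^k)\}$ is nonincreasing, hence $E(x^k)\leq E(x^0)$ for all $k$, which is precisely the statement $x^k\in\mathcal{M}(x^0)$ for every $k$. Here I would invoke \cref{assum1}: the sub-level set $\mathcal{M}(x^0)$ is compact, so the iterates remain in a bounded set, and combined with $E$ being bounded below this also ensures the energy values converge. The main subtlety to address carefully is the interplay between the two acceptance tests: one must confirm that whenever the line search in \cref{alg:esalpha} terminates with an accepted step, the produced $z^k$ satisfies \cref{condition:linesearch}, and that this in turn is compatible with the monotone decrease, so that no iteration can both fail the restart test and still alter $x^{k+1}$. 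This is exactly what the remark after \cref{lem:BPGsuffdesc} secures via the range \cref{alpha:range}, so the only real work is the clean case split above together with the monotonicity argument for the containment.
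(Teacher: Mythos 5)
Your proposal is correct and takes essentially the same route as the paper, which dispatches \cref{coro:suff} in a single sentence by observing that at every iteration either the acceptance test \cref{criterion:line_search} holds (yielding the constant $c$ with $x^{k+1}=z^k$) or the algorithm takes the null step $x^{k+1}=x^k$, where \cref{ABPGsuffDec} is trivial, with well-definedness of the line search \cref{alg:esalpha} secured by the remark after \cref{lem:BPGsuffdesc} under \cref{alpha:range}; monotonicity then yields $\{x^k\}\subset\calM(x^0)$ exactly as you argue. Your explicit two-branch case split is just a fleshed-out version of the paper's remark, so there is no substantive difference.
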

The proof of \cref{coro:suff} is a straightforward result as AA-BPG algorithm is well
defined and the condition \cref{criterion:line_search} or \cref{condition:linesearch} holds at each iteration. 
Let $ \calB(x^0)  $  be the closed ball that  contains $ \calM(x^0) $. Since $ h, F\in C^2$,  there exist $\rho_h,\rho_f>0$ such that
\begin{align}
\rho_h = \sup_{x\in\calB(x^0)} \|\nabla^2 h(x)\|, \quad \rho_f =  \sup_{x\in\calB(x^0)} \|\nabla^2 f(x)\|.
\label{rho_hF}
\end{align}
Thus, we can show the subgradient of each step generated by \cref{alg:ABPG} is bounded by the movement of $x^k$.
\begin{lemma}[Bounded the subgradient]\label{lem:gradbound}
Suppose \cref{assum1} and \cref{assum2} holds. Let $\{x^k\}$ be the sequence
generated by \cref{alg:ABPG}. Then, there exists $c_1=\rho_f+\rho_h/\alpha_{\min}>0$ such that
\begin{align}
\dist(\vzero,\partial E(x^{k+1}))\leq  c_1  (\|x^{k+1} - x^k\| + \bar w\|x^k - x^{k-1}\|),
\label{gradbound}
\end{align}
where  $ \dist(\vzero,\partial E(x^{k+1})) = \inf\{\|y\|: y\in\partial E(x^{k+1})\} $
and $ \rho_h$, $ \rho_f $ are defined as \cref{rho_hF} and $\bar w, \alpha_{\min}$ are constants defined 
in \cref{alg:ABPG} and \cref{alg:esalpha}, respectively.
\end{lemma}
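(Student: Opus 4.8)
The plan is to exhibit a concrete element of $\partial E(x^{k+1})$ and then control its norm by the step lengths, exploiting that $f$ and $h$ are $C^2$ with Hessians uniformly bounded on a compact set. The starting point is the first-order optimality condition of the Bregman proximal subproblem \cref{BProxsubprob}: since $g$ is convex and $h$ is strongly convex, $z^k$ is the unique minimizer and satisfies
\begin{equation*}
\vzero \in \partial g(z^k) + \nabla f(y^k) + \frac{1}{\alpha_k}\big(\nabla h(z^k) - \nabla h(y^k)\big),
\end{equation*}
because $\nabla_x D_h(\cdot,y^k) = \nabla h(\cdot) - \nabla h(y^k)$. Adding and subtracting $\nabla f(z^k)$ and recalling $\partial E(z^k) = \nabla f(z^k) + \partial g(z^k)$, I obtain the explicit subgradient
\begin{equation*}
\xi^{k+1} := \big(\nabla f(z^k) - \nabla f(y^k)\big) - \frac{1}{\alpha_k}\big(\nabla h(z^k) - \nabla h(y^k)\big) \in \partial E(z^k).
\end{equation*}
In the accepting branch of \cref{alg:ABPG} one has $x^{k+1} = z^k$, so $\xi^{k+1}\in\partial E(x^{k+1})$ and it remains to bound $\|\xi^{k+1}\|$.

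The second step converts the gradient differences into distances. By \cref{coro:suff} the iterates stay in the compact set $\calM(x^0)$; I would first enlarge $\calB(x^0)$, if necessary, so that this convex ball also contains every extrapolated point $y^k = x^k + w_k(x^k - x^{k-1})$, which is possible since $w_k\le\bar w$ and $\calM(x^0)$ is bounded. As $\calB(x^0)$ is convex with $y^k,z^k\in\calB(x^0)$, the whole segment $[y^k,z^k]$ lies in $\calB(x^0)$, so the mean value inequality together with \cref{rho_hF} gives $\|\nabla f(z^k)-\nabla f(y^k)\|\le \rho_f\|z^k-y^k\|$ and $\|\nabla h(z^k)-\nabla h(y^k)\|\le\rho_h\|z^k-y^k\|$. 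Using $\alpha_k\ge\alpha_{\min}$ and the triangle inequality then yields
\begin{equation*}
\|\xi^{k+1}\| \le \Big(\rho_f + \frac{\rho_h}{\alpha_{\min}}\Big)\|z^k - y^k\| = c_1\,\|x^{k+1}-y^k\|.
\end{equation*}
Finally, expanding $x^{k+1}-y^k = (x^{k+1}-x^k) - w_k(x^k-x^{k-1})$ and using $0\le w_k\le\bar w$ produces $\dist(\vzero,\partial E(x^{k+1}))\le\|\xi^{k+1}\|\le c_1(\|x^{k+1}-x^k\|+\bar w\|x^k-x^{k-1}\|)$.

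The restart branch deserves a separate word: there $x^{k+1}=x^k$ is produced together with the reset $w_{k+1}=0$, so I would read the estimate through the recomputation with $w_k=0$ described after \cref{criterion:line_search}, for which $y^k=x^k$ and the argument above applies verbatim with the extrapolation term absent.

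I expect the main obstacle to be precisely this domain issue in the second step: the mean value inequality is legitimate only if the segment joining $y^k$ and $z^k$ remains inside the region where the uniform Hessian bounds $\rho_f,\rho_h$ hold. The accepted iterate $z^k=x^{k+1}$ lies in $\calM(x^0)$, but the extrapolated base point $y^k$ may leave $\calM(x^0)$, so the care needed is to fix once and for all a compact convex set---a suitably enlarged $\calB(x^0)$---that simultaneously contains $\calM(x^0)$ and all admissible extrapolations, and to check that the step-size floor $\alpha_{\min}$ controls $1/\alpha_k$ uniformly; everything else is triangle-inequality bookkeeping.
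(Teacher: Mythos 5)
Your proof is correct and follows essentially the same route as the paper's: both extract from the first-order optimality condition of \cref{BProxsubprob} the explicit subgradient $\nabla f(x^{k+1})-\nabla f(y^k)-\frac{1}{\alpha_k}\left(\nabla h(x^{k+1})-\nabla h(y^k)\right)\in\partial E(x^{k+1})$ via $\partial E=\nabla f+\partial g$, bound it by $(\rho_f+\rho_h/\alpha_k)\|x^{k+1}-y^k\|$ using the Hessian bounds \cref{rho_hF} on $\calB(x^0)$, and finish with $\alpha_k\geq\alpha_{\min}$, $y^k=x^k+w_k(x^k-x^{k-1})$ and $w_k\leq\bar w$. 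Your two added precautions---enlarging $\calB(x^0)$ so that it provably contains the extrapolated points $y^k$ (the paper simply asserts $y^k\in\calB(x^0)$), and treating the restart branch where $x^{k+1}=x^k$ is not itself a proximal output---tighten details the paper's proof passes over silently, and are refinements rather than a different argument.
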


\begin{proof}
By the first order optimality condition of \cref{Iter:ABreg}, we get
\begin{align*}
&\vzero \in \nabla f(y^k)  +\dfrac{1}{\alpha_k}\left(\nabla h(x^{k+1})  - \nabla h(y^k)\right) +  \partial g(x^{k+1})\\
\iff & - \nabla f(y^k) -\dfrac{1}{\alpha_k}\left(\nabla h(x^{k+1})  - \nabla h(y^k)\right)\in \partial g(x^{k+1})
\end{align*}
Since $ f\in C^2 $, we know\,{\cite[Theorem 5.38]{vanconvex}}
\begin{equation}\label{sum_subgrad}
\partial E(x) = \nabla f(x) + \partial g(x).
\end{equation}
From \cref{lem:BPGsuffdesc}, we have $ x^{k},x^{k-1}\in \calM(x^0)$, then $ y^k = (1+w_k)x^k - w_kx^{k-1}\in \calB(x^0) $.  
Together with \eqref{sum_subgrad}, we have
\begin{align*}
\dist(\vzero,\partial E(x^{k+1})) & = \inf_{y\in\partial g(x^{k+1})} \|  \nabla f(x^{k+1})  +  y\|\\
&\leq  \| \nabla f(x^{k+1}) - \nabla f(y^k) -  \dfrac{1}{\alpha_k}\left(\nabla h(x^{k+1})  - \nabla h(y^k)\right)\|\\
& \leq  \| \nabla f(x^{k+1}) - \nabla f(y^k)\| + \dfrac{1}{\alpha_k}\|\nabla h(x^{k+1})  - \nabla h(y^k)\|\\
&\leq (\rho_f+ \dfrac{ \rho_h}{\alpha_k})  \|x^{k+1} - y^k\|\\
&\leq   c_1  (\|x^{k+1} - x^k\| + \bar w\|x^k - x^{k-1}\|),
\end{align*}
where the last inequality is from $y^k = x^k+w_k(x^k-x^{k-1})$ and $w^k\in[0,\bar w]$.
\end{proof}

Now, we are ready to establish the sub-convergence property of \cref{alg:ABPG}.
\begin{thm}\label{Thm:BAPGconvergence}
Suppose \cref{assum1} and \cref{assum2} hold. Let $\{x^k\}$ be the sequence generated by \cref{alg:ABPG}. 
Then, for any limit point $x^*$ of $\{x^k\}$, we have $\vzero\in\partial E(x^*)$.	
\label{thm:convergence}
\end{thm}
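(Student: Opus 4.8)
The plan is to combine the two structural facts already established---the sufficient decrease in \cref{coro:suff} and the subgradient bound in \cref{lem:gradbound}---with the closedness of the subdifferential graph in finite dimensions. First I would invoke \cref{assum1}: since $E$ is bounded below and $\{E(x^k)\}$ is nonincreasing by \cref{ABPGsuffDec}, the sequence $\{E(x^k)\}$ converges to some finite limit $E^*$. Telescoping \cref{ABPGsuffDec} over $k=1,\dots,K$ yields
\begin{equation*}
c_0\sum_{k=1}^{K}\|x^k-x^{k+1}\|^2 \leq E(x^1)-E(x^{K+1})\leq E(x^1)-E^*,
\end{equation*}
so letting $K\to\infty$ shows $\sum_{k}\|x^k-x^{k+1}\|^2<\infty$, and in particular $\|x^{k+1}-x^k\|\to\vzero$.

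Next I would feed this into the subgradient estimate. By \cref{lem:gradbound}, after reindexing, $\dist(\vzero,\partial E(x^{k}))\leq c_1(\|x^{k}-x^{k-1}\|+\bar w\|x^{k-1}-x^{k-2}\|)$; since both consecutive-difference terms tend to zero, it follows that $\dist(\vzero,\partial E(x^{k}))\to 0$, i.e.\ there exist $u^k\in\partial E(x^k)$ with $u^k\to\vzero$. Now let $x^*$ be any limit point, with a subsequence $x^{k_j}\to x^*$ (such subsequences exist because $\{x^k\}\subset\calM(x^0)$ is compact by \cref{coro:suff}). Using \cref{sum_subgrad}, write $u^{k_j}=\nabla f(x^{k_j})+v^{k_j}$ with $v^{k_j}\in\partial g(x^{k_j})$. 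Because $f\in C^2$, $\nabla f$ is continuous, so $\nabla f(x^{k_j})\to\nabla f(x^*)$, and hence $v^{k_j}=u^{k_j}-\nabla f(x^{k_j})\to-\nabla f(x^*)$.

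The remaining and main step is to pass the convex subgradient inclusion to the limit, i.e.\ to deduce $-\nabla f(x^*)\in\partial g(x^*)$ from $v^{k_j}\in\partial g(x^{k_j})$, $x^{k_j}\to x^*$, and $v^{k_j}\to-\nabla f(x^*)$; this is the closedness of the graph of $\partial g$. The subtlety is that $g$ is only lower semicontinuous, so a priori one has merely $\liminf_j g(x^{k_j})\geq g(x^*)$. To upgrade this I would start from the subgradient inequality $g(z)\geq g(x^{k_j})+\langle v^{k_j},z-x^{k_j}\rangle$ valid for all $z$; taking $z=x^*$ and using $v^{k_j}\to-\nabla f(x^*)$ together with $x^{k_j}\to x^*$ gives $\limsup_j g(x^{k_j})\leq g(x^*)$, so in fact $g(x^{k_j})\to g(x^*)$. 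With this continuity of $g$ along the subsequence, passing to the limit in the subgradient inequality for an arbitrary fixed $z$ yields $g(z)\geq g(x^*)+\langle-\nabla f(x^*),z-x^*\rangle$, i.e.\ $-\nabla f(x^*)\in\partial g(x^*)$. Combining with \cref{sum_subgrad} then gives $\vzero=\nabla f(x^*)+(-\nabla f(x^*))\in\nabla f(x^*)+\partial g(x^*)=\partial E(x^*)$, which is the claim. I expect the recovery of $g(x^{k_j})\to g(x^*)$ (equivalently, the outer semicontinuity of $\partial g$) to be the only genuinely delicate point; everything else is summation and continuity.
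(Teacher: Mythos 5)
Your proof is correct, and it shares the paper's skeleton for most of its length: the telescoping of the sufficient decrease from \cref{coro:suff} to get $\|x^{k+1}-x^k\|\to 0$, and the subgradient bound of \cref{lem:gradbound} plus \cref{sum_subgrad} and continuity of $\nabla f$ to produce $v^{k_j}\in\partial g(x^{k_j})$ with $v^{k_j}\to-\nabla f(x^*)$, are exactly the paper's steps. Where you genuinely diverge is the delicate step you correctly identify, establishing $\limsup_j g(x^{k_j})\leq g(x^*)$: the paper extracts this from the algorithm itself, plugging $x=x^*$ into the minimization defining the iterate in \cref{Iter:ABreg} at step $k_j$ and letting $j\to\infty$, whereas you obtain it purely from convex analysis, evaluating the subgradient inequality at $z=x^*$ and using the boundedness of the convergent $v^{k_j}$, so that $g(x^{k_j})\leq g(x^*)+\langle v^{k_j},x^{k_j}-x^*\rangle$ does the job. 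Your route is more elementary and shows the subproblem structure is not needed here; in fact, given convexity of $g$, even your intermediate claim $g(x^{k_j})\to g(x^*)$ is dispensable, since taking $\liminf_j$ in $g(z)\geq g(x^{k_j})+\langle v^{k_j},z-x^{k_j}\rangle$ for fixed $z$ and invoking only lower semicontinuity, $\liminf_j g(x^{k_j})\geq g(x^*)$, already yields $-\nabla f(x^*)\in\partial g(x^*)$: the graph of the subdifferential of a proper convex lsc function is closed without any value convergence. What the paper's subproblem-based argument buys is robustness: it is the version that survives when $g$ is nonconvex and $\partial g$ is a limiting subdifferential, where function-value convergence along the subsequence is genuinely required. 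One small point you should make explicit: evaluating the subgradient inequality at $z=x^*$ presupposes $x^*\in\dom g$ (the paper's definition of $\partial g$ quantifies over $y\in\dom g$); this holds because $E$ is lsc, so $\calM(x^0)$ is closed and, by \cref{assum1}, compact, whence $x^*\in\calM(x^0)$ and $E(x^*)\leq E(x^0)<+\infty$.
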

\begin{proof}
From \cref{coro:suff}, we know $\{x^k\}\subset\calM(x^0)\subset\calB(x^0)$ and thus bounded. 
Then, the set of limit points of $\{x^k\}$ is nonempty. For any limit point $x^*$, there exist 
a subsequence $\{x^{k_j}\}$ such that $x^{k_j}\to x^*$ as $j\to\infty$.  
We know $\{E(x^k)\}$ is a decreasing sequence. Together with the fact that $E$ is bounded below, 
there exists some $\bar E$ such that $E(x^k)\to\bar E$ as $k\to\infty$. 
Moverover, it has
\begin{equation}
E(x^0)-\bar E = \lim_{K\to\infty}\sum_{j=0}^{K}\left(E(x^j)-E(x^{j+1})\right)\geq c_0\lim_{K\to\infty}\sum_{j=0}^K\|x^j-x^{j+1}\|^2,
\end{equation}
and implies $\|x^k-x^{k-1}\|\to0$ as $k\to\infty$. As a result,
\begin{equation*}
\lim\limits_{k\to\infty}\|x^k - y^{k-1}\| \leq \lim\limits_{k\to\infty}(\|x^k - x^{k-1}\| + \bar{\omega} \|x^{k-1}-x^{k-2}\|) = 0.
\end{equation*} 
Together with \cref{gradbound}, 
it implies that there exists $u^{k_j}\in\partial g(x^{k_j})$ such that
\begin{equation}\label{sublim}
\lim_{j\to\infty}\|\nabla f(x^{k_j})+u^{k_j}\|=0\Rightarrow \lim_{j\to\infty} u^{k_j} = -\nabla f(x^*),
\end{equation}
as $\nabla f$ is continuous and $x^{k_j}\to x^*$ when $j\to\infty$.

In the next, we prove $\lim\limits_{j\to\infty}g(x^{k_j})= g(x^*)$.
It is easy to know  that  $\lim\limits_{j\to \infty}x^{k_j-p}= x^*$ for finite $ p\geq 0 $ since $ \lim\limits_{k\to\infty}\|x^k - x^{k-1}\| = 0 $. 
Thus,  we have $ y^{k_j-1} = x^{k_j-1} + w_{k_j-1}(x^{k_j-1}-x^{k_j-2})\to x^* $ as $ j\to\infty $.  
From \cref{Iter:ABreg}, we know
\begin{equation}
\begin{split}
&g(x^{k_j}) + \langle x^{k_j}- y^{k_j-1},\nabla f(y^{k_j-1})\rangle + \dfrac{1}{\alpha_k}D(x^{k_j}, y^{k_j-1})\\
\leq~ & g(x) + \langle x- y^{k_j-1},\nabla f(y^{k_j-1})\rangle + \dfrac{1}{\alpha_{k}}D(x, y^{k_j-1}),\quad \forall x.
\end{split}	
\end{equation}
Let $ x = x^* $ and $ j\to \infty  $, we get $ \limsup\limits_{j\to\infty} g(x^{k_j}) \leq g(x^*) $. By the fact that $ g(x) $ is lower semi-continuous, it has $ \lim\limits_{j\to\infty}g(x^{k_j}) = g(x^*) $.

Thus, by the convexity of $g$, we have
\begin{equation}\label{convex}
g(x)\geq g(x^{k_j}) + \langle u^{k_j}, x -x^{k_j}\rangle, \forall x\in\dom g.
\end{equation}
Let $j\to\infty$ in \eqref{convex} and using the facts $x^{k_j}\to x^*$, $g(x^{k_j})\to g(x^*)$ as $j\to\infty$ and \eqref{sublim},  we have $-\nabla f(x^*)\in\partial g(x^*)$ and thus $0\in\partial E(x^*)$.
\end{proof}

Furthermore, the sub-sequence convergence can be strengthen by imposing 
the next assumption on $E$ which is known as the Kurdyka-Lojasiewicz (KL) property\,\cite{bolte2014proximal}. 
\begin{assumption}\label{assum3}
$ E(x) $ is the $ KL $ function, i.e. for all $ \bar{x}\in \dom \partial E  := \{x :
\partial E(x)\neq \emptyset\}$, there exist $\eta>0$, a neighborhood $ U $ of $
\bar x $ and $\psi\in\Psi_\eta: =\{\psi\in C[0,\eta)\cap C^1(0,\eta), \text{ where }
\psi \text{ is concave}, \psi(0)=0, \psi^{'}>0 \text{ on } (0,\eta)\}$ such that for
all $x\in U\cap \{x:E(\bar x)<E(x)<E(\bar x)+\eta\}$, the following inequality holds,
\begin{equation}
\psi^{'}(E(x)-E(\bar x))\,\dist(\vzero,\partial E(x))\geq 1.
\end{equation}
\end{assumption}

\begin{thm}\label{Thm:BAPGSeqconver}
Suppose \cref{assum1}, \cref{assum2} and \cref{assum3} hold. Let $\{x^k\}$ be 
the sequence generated by \cref{alg:ABPG}. Then, there exists a point $x^*\in\calB(x^0)$ such that 
\begin{align}
\lim\limits_{k\rightarrow+\infty}x^k=x^* ,\quad	 \vzero\in\partial E(x^*).
\end{align}	
\end{thm}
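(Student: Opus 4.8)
The plan is to run the now-standard Kurdyka--{\L}ojasiewicz (KL) finite-length argument in the spirit of Attouch--Bolte--Svaiter, since all three structural ingredients are already available: a \emph{sufficient decrease} estimate $E(x^k)-E(x^{k+1})\geq c_0\|x^k-x^{k+1}\|^2$ from \cref{coro:suff}, a \emph{relative error} (subgradient) bound $\dist(\vzero,\partial E(x^{k+1}))\leq c_1(\|x^{k+1}-x^k\|+\bar w\|x^k-x^{k-1}\|)$ from \cref{lem:gradbound}, and a \emph{continuity along subsequences} property implicit in the proof of \cref{Thm:BAPGconvergence}. The target is to prove that the increments $d_k:=\|x^k-x^{k-1}\|$ are summable; once $\sum_k d_k<\infty$ the sequence $\{x^k\}$ is Cauchy in the complete space and hence converges to a single point $x^*$, whose stationarity $\vzero\in\partial E(x^*)$ follows immediately from \cref{Thm:BAPGconvergence}.

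First I would record the preliminary reductions. Since $\{E(x^k)\}$ is nonincreasing and bounded below, it converges to some $\bar E$; if $E(x^{k_0})=\bar E$ for some $k_0$, then \cref{coro:suff} forces $x^k=x^{k_0}$ for all $k\geq k_0$ and the conclusion is trivial, so I assume $E(x^k)>\bar E$ for all $k$. Let $\Omega$ be the (nonempty, compact) set of limit points of $\{x^k\}\subset\calB(x^0)$. Reusing the continuity argument from the proof of \cref{Thm:BAPGconvergence}, along any subsequence $x^{k_j}\to x^*$ one has $g(x^{k_j})\to g(x^*)$ and, by continuity of $f$, $E(x^{k_j})\to E(x^*)$; comparing with $E(x^k)\to\bar E$ gives $E(x^*)=\bar E$, so $E\equiv\bar E$ on $\Omega$ and $\Omega\subset\{x:\vzero\in\partial E(x)\}$. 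This lets me invoke the \emph{uniformized} KL property (a compactness consequence of \cref{assum3} on $\Omega$): there are $\varepsilon,\eta>0$ and a concave $\psi\in\Psi_\eta$ so that the KL inequality holds for every $x$ with $\dist(x,\Omega)<\varepsilon$ and $\bar E<E(x)<\bar E+\eta$. Since $\dist(x^k,\Omega)\to0$ and $E(x^k)\downarrow\bar E$, there is $K$ with $x^k$ in this uniform KL region for all $k\geq K$.

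The core step chains the three estimates. For $k\geq K$, concavity of $\psi$ together with the KL inequality $\psi'(E(x^k)-\bar E)\,\dist(\vzero,\partial E(x^k))\geq1$ yields
\begin{equation}
\psi(E(x^k)-\bar E)-\psi(E(x^{k+1})-\bar E)\geq\frac{E(x^k)-E(x^{k+1})}{\dist(\vzero,\partial E(x^k))}\geq\frac{c_0\,d_{k+1}^2}{c_1\,(d_k+\bar w\,d_{k-1})},
\end{equation}
where I used \cref{coro:suff} in the numerator and the index-shifted \cref{lem:gradbound} in the denominator. Writing $\Delta_k^\psi:=\psi(E(x^k)-\bar E)-\psi(E(x^{k+1})-\bar E)$ and applying the arithmetic--geometric mean inequality to $d_{k+1}^2\leq\frac{c_1}{c_0}(d_k+\bar w\,d_{k-1})\Delta_k^\psi$ gives the key per-step bound
\begin{equation}
2\,d_{k+1}\leq(d_k+\bar w\,d_{k-1})+\frac{c_1}{c_0}\,\Delta_k^\psi.
\end{equation}
Summing this over $k=K,\dots,N$, the $\Delta_k^\psi$ telescope and are controlled by $\psi(E(x^K)-\bar E)$ (recall $\psi\geq0$), while reindexing the remaining shifted sums of $d_k$ and collecting terms produces a bound of the form $(1-\bar w)\sum_{k}d_k\leq(1+\bar w)d_K+\bar w\,d_{K-1}+\frac{c_1}{c_0}\psi(E(x^K)-\bar E)$, uniform in $N$. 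Hence $\sum_k d_k<\infty$, the sequence converges to some $x^*\in\calB(x^0)$, and \cref{Thm:BAPGconvergence} gives $\vzero\in\partial E(x^*)$.

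The main obstacle is the summation step in the presence of the inertial term $\bar w\,d_{k-1}$: unlike the non-inertial proximal case, the per-step inequality couples three consecutive increments, so the telescoping only closes after careful reindexing, and the contraction of the partial sums requires controlling the effective coefficient (concretely, a condition such as $\bar w<1$, which absorbs the extrapolation). I would verify that the restart mechanism and the step-size bounds indeed keep the governing constants in the admissible range so that $(1-\bar w)>0$; the secondary technical point, establishing the uniformized KL inequality over $\Omega$ and confirming $E$ is constant there, is routine given the continuity argument already carried out in \cref{Thm:BAPGconvergence}.
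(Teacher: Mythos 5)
Your proposal follows exactly the route the paper takes in Appendix A: reduce to the case $E(x^k)>\bar E$ (the case $E(x^{k_0})=\bar E$ being trivial by sufficient decrease), show $E$ is constant on the compact limit set by reusing the continuity of $g$ along subsequences from the proof of \cref{Thm:BAPGconvergence}, invoke the uniformized KL property on that set, and chain it with the sufficient decrease of \cref{coro:suff} and the index-shifted subgradient bound of \cref{lem:gradbound}, concluding finite length via concavity of $\psi$, AM--GM and telescoping. The one substantive deviation is in how you close the summation, and it introduces a restriction the theorem does not have. Your symmetric AM--GM split $2d_{k+1}\le(d_k+\bar w\,d_{k-1})+\frac{c_1}{c_0}\Delta_k^\psi$ leaves total coefficient $1+\bar w$ on the reindexed partial sums, so the telescoping only yields $(1-\bar w)\sum_k d_k\le\cdots$ and you must assume $\bar w<1$. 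But $\bar w$ is an arbitrary positive parameter of \cref{alg:ABPG} (the algorithm only requires $\bar w>0$), so there is nothing to ``verify'' about the restart mechanism or step sizes: as written, your argument proves the theorem only for $\bar w<1$, which is a genuine gap relative to the statement.

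The paper avoids this by weighting the AM--GM asymmetrically. It first absorbs the extrapolation constant into the multiplicative constant via $\|x^k-x^{k-1}\|+\bar w\|x^{k-1}-x^{k-2}\|\le(1+\bar w)\left(\|x^k-x^{k-1}\|+\|x^{k-1}-x^{k-2}\|\right)$, setting $C=(1+\bar w)c_1/c_0$, and then uses
\begin{equation*}
2\|x^{k+1}-x^k\|\le\tfrac{1}{2}\left(\|x^k-x^{k-1}\|+\|x^{k-1}-x^{k-2}\|\right)+2C\,\Delta_{k,k+1},
\end{equation*}
so that after summation the shifted sums enter with total coefficient $1$ against the $2$ on the left, and the telescoped $\psi$-terms plus two boundary increments bound $\sum_i\|x^{i+1}-x^i\|$ with no condition on $\bar w$. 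The same repair is available to you in one line: replace your split by the weighted Young inequality $2\sqrt{uv}\le\epsilon u+\epsilon^{-1}v$ with $\epsilon<2/(1+\bar w)$ (the paper's choice corresponds to $\epsilon=1/2$ after absorbing $\bar w$ into $C$). With that change, your proof coincides with the paper's.
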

\begin{proof}
The proof is in Appendix A.
\end{proof}
It is known from \cite{bolte2014proximal} that many functions satisfy \cref{assum3} including the energy function in PFC models.
In the following context, we apply the AA-BPG method for solving the PFC models \cref{min:finite} by introducing two Bregman distances.

\section{AA-BPG method for solving PFC models}
\label{sec:applicationPFC}

The problem \cref{min:finite} can be reduced to \cref{GeneralFormulation} by setting 
\begin{equation}\label{eq:PFC1}
f(\hPhi) = F(\hPhi),\quad g(\hPhi)=G(\hPhi) + \delta_{\cS}(\hPhi)
\end{equation}
where $\cS = \{\hPhi:e_1^\top \hPhi =0\}$ and $\delta_{\cS}(\hPhi) =0$ if
$\hPhi\in\cS$ and $+\infty$ otherwise.  The main difficulty of applying
\cref{alg:ABPG} is solving the subproblem \cref{BProxsubprob} efficiently. In this
section, two different strongly convex functions $h$ are chosen as 
\begin{equation}\label{Kernal_h}
h(x) = \frac{1}{2}\|x\|^2 ~ \mbox{ (P2) }\quad\mbox{ and }\quad
h(x)=\frac{a}{4}\|x\|^4+\frac{b}{2}\|x\|^2+1 ~\mbox{ (P4), }
\end{equation}
where $a,b>0$ and (P2) and (P4) represent the highest order of the $\ell^2$ norm.

\noindent{\underline{\bf Case (P2).}}
The Bregman distance of $D_h$ is reduced to the Euclidean distance, i.e.\
\begin{equation}
D_h(x,y) = \dfrac{1}{2}\|x-y\|^2.
\end{equation}
The subproblem \cref{BProxsubprob} is reduced to 
\begin{equation}\label{L2:sub}
\min_{\hPhi}~G(\hPhi) + \langle \nabla F(\hPsi^k), \hPhi-\hPsi^k\rangle + \frac{1}{2\alpha_k}\|\hPhi-\hPsi^k\|^2, \mbox{ s.t. } e_1^\top \hPhi = 0,
\end{equation}
where $\hPsi^k = \hPhi^k+w_k(\hPhi^k-\hPhi^{k-1})$.
Although the \cref{L2:sub} is a constrained minimization problem, it has a closed
form solution based on our discretization which leads to a fast computation.
\begin{lemma}\label{lemma:l2}
Given $\alpha_k>0$, if $e_1^\top\hPsi^k=0$, the minimizer  of \cref{L2:sub}, denoted by $\hPhi^{k+1}$, is given by 
	\begin{align}\label{BProx_Norm2}
	\hPhi^{k+1} = \left(I+\alpha_kD\right)^{-1}\left(\hPsi^k  -\alpha_k\calP_1\nabla F(\hPsi^k)\right),
	\end{align}
	where $D$ is defined in \cref{eq:Grad}  and $ \calP_1 = I-e_1e_1^\top $ is the projection into the set $ \calS $.
\end{lemma}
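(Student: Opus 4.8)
The plan is to treat \cref{L2:sub} as a strictly convex quadratic program with a single linear equality constraint, so that its unique minimizer is completely determined by the first-order (KKT) conditions. First I would observe that, by \cref{eq:Grad}, $G$ is a quadratic with constant Hessian $D\succeq 0$; hence the objective in \cref{L2:sub} has Hessian $D+\tfrac{1}{\alpha_k}I$, which is positive definite since $\alpha_k>0$. The objective is therefore strictly convex, the feasible set is an affine subspace, and the constrained problem admits a unique global minimizer characterized by stationarity of the Lagrangian.

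Next I would introduce a multiplier $\lambda$ for the constraint $e_1^\top\hPhi=0$ and write the first-order condition
\[
D\hPhi + \nabla F(\hPsi^k) + \frac{1}{\alpha_k}\bigl(\hPhi-\hPsi^k\bigr) + \lambda e_1 = 0,
\]
which rearranges to $(I+\alpha_k D)\hPhi = \hPsi^k - \alpha_k\nabla F(\hPsi^k) - \alpha_k\lambda e_1$, so that
\[
\hPhi = (I+\alpha_k D)^{-1}\bigl(\hPsi^k - \alpha_k\nabla F(\hPsi^k) - \alpha_k\lambda e_1\bigr).
\]

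The decisive step is eliminating $\lambda$ using the constraint together with the diagonal structure of $D$. Because $D$ is diagonal, $e_1$ is an eigenvector of $D$, hence of $(I+\alpha_k D)^{-1}$, giving $e_1^\top(I+\alpha_k D)^{-1} = (1+\alpha_k d_1)^{-1}e_1^\top$, where $d_1$ is the first diagonal entry of $D$. Applying $e_1^\top$ to the displayed expression for $\hPhi$, imposing $e_1^\top\hPhi=0$, and using the hypothesis $e_1^\top\hPsi^k=0$ together with $e_1^\top e_1 = 1$, the common scalar factor cancels and I obtain $\lambda = -e_1^\top\nabla F(\hPsi^k)$. Substituting this value back and combining the two terms carrying $\nabla F(\hPsi^k)$ gives $-\alpha_k\nabla F(\hPsi^k)+\alpha_k e_1 e_1^\top\nabla F(\hPsi^k) = -\alpha_k(I-e_1 e_1^\top)\nabla F(\hPsi^k) = -\alpha_k\calP_1\nabla F(\hPsi^k)$, which is exactly \cref{BProx_Norm2}.

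I do not anticipate a genuine difficulty; the only subtlety is this clean cancellation of the multiplier, which hinges entirely on the fact that $e_1$ is an eigenvector of the diagonal matrix $D$ and that $\calP_1 = I-e_1 e_1^\top$ is precisely the orthogonal projection annihilating $e_1$, so $e_1^\top\calP_1 = 0$. As a consistency check, one can instead verify directly that the proposed $\hPhi^{k+1}$ in \cref{BProx_Norm2} is feasible: applying $e_1^\top$ and using $e_1^\top(I+\alpha_k D)^{-1}\propto e_1^\top$, $e_1^\top\hPsi^k=0$, and $e_1^\top\calP_1=0$ makes every term vanish, and this together with stationarity and strict convexity reconfirms optimality.
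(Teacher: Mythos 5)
Your proposal is correct and takes essentially the same route as the paper: both write the KKT conditions for the equality-constrained subproblem, eliminate the Lagrange multiplier by applying $e_1^\top$ and exploiting the diagonality of $D$ together with $e_1^\top\hPsi^k=0$ (your $\lambda=-e_1^\top\nabla F(\hPsi^k)$ is the paper's $\xi_k$ up to the sign convention), and then substitute back so the two gradient terms combine into $-\alpha_k\calP_1\nabla F(\hPsi^k)$. Your explicit remarks on strict convexity of the objective and the feasibility check of \cref{BProx_Norm2} are slightly more careful than the paper's proof but do not change the argument.
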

	\begin{proof}
	The KKT conditions for this subproblem \cref{BProxsubprob} can be written as
	\begin{align}
	\label{eq:proxKKT1}
	&\nabla G(\hPhi^{k+1}) + \nabla F(\hPsi^k) + \dfrac{1}{\alpha_k}\left(\hPhi^{k+1} - \hPsi^k\right) - \xi_k e_1 = 0,\\
	&e_1^\top \hPhi^{k+1} = 0,
	\label{eq:proxKKT2}
	\end{align}
	where $ \xi_k $ is the Lagrange multiplier. Taking the inner product with $ e_1 $
	in \cref{eq:proxKKT1},  we obtain
	\begin{align*}
	\xi_k = e_1^\top \left(\nabla G(\hPhi^{k+1}) + \nabla F(\hPhi^k) - \dfrac{1}{\alpha_k}\hPsi^k\right).
	\end{align*}
	Using \cref{eq:proxKKT2} and \cref{eq:Grad}, we know
	\begin{equation*}
	e_1^\top \nabla G(\hPhi^{k+1})= e_1^\top (D\hPhi^{k+1}) = 0.
	\end{equation*}
	 Together with $ e_1^\top \hPsi^k = 0$, we have
$\xi_k = e_1^\top \nabla F(\hPsi^k)$.
	Substituting it into \cref{eq:proxKKT1}, it follows that
	\begin{align*}
	\hPhi^{k+1}
		&=\left(\alpha_k D+ I\right)^{-1}\left(\hPsi^k  -\alpha_k\calP_1\nabla F(\hPsi^k)\right).
	\end{align*}
	\end{proof}
It is noted that from the proof of \cref{lemma:l2}, the feasibility assumption
$e_1^\top \hPsi^k=0$ holds as long as $e_1^\top \hPhi^0=0$ which can be set in the
initialization. The detailed algorithm is given in \cref{alg:adapAPG} with $K=2$.

\noindent{\underline{\bf Case (P4).}} In this case, the subproblem \cref{BProxsubprob} is reduced to
\begin{equation}\label{L4:sub}
\min_{\hPhi} G(\hPhi) + \langle \nabla F(\hPsi^k),\hPhi-\hPsi^k\rangle + D_h(\hPhi,\hPsi^k), \mbox{ s.t. } e_1^\top \hPhi = 0.
\end{equation}
where $\hPsi^k=\hPhi^k+w_k(\hPhi^k-\hPhi^{k-1})$. The next lemma shows the optimal
condition of minimizing \cref{L4:sub}.
\begin{lemma}
Given $\alpha^k>0$. If $e_1^\top\hPsi^k=0$,  the minimizer of \cref{L4:sub}, denoted by $\hPhi^{k+1}$, is given by
	\begin{align}\label{BProx_Norm4}
	\hPhi^{k+1} = [\alpha_k D + (ap^*+b)I]^{-1}(\nabla h(\hPsi^k) - \alpha_k \calP_1\nabla F(\hPsi^k)),	
	\end{align}
	where $D$ is given in \cref{eq:Grad} and $ p^* $ is a fixed point of $  p =\|\hPhi^{k+1}\|^2 :=  r(p) $.
\end{lemma}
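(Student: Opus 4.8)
The plan is to mirror the proof of \cref{lemma:l2} for the quadratic kernel, the only new feature being the nonlinearity of $\nabla h$ for the (P4) choice in \cref{Kernal_h}. Since $G(\hPhi)=\tfrac12\hPhi^\top D\hPhi$ is convex, the linear term is affine, and $D_h(\cdot,\hPsi^k)$ is strongly convex (because $h$ is strongly convex), the subproblem \cref{L4:sub} has a strongly convex objective over the affine feasible set $\cS$, so a unique minimizer $\hPhi^{k+1}$ exists and is completely characterized by its KKT system. First I would write that system with Lagrange multiplier $\xi_k$ for the constraint $e_1^\top\hPhi=0$, namely $\nabla G(\hPhi^{k+1})+\nabla F(\hPsi^k)+\tfrac{1}{\alpha_k}\bigl(\nabla h(\hPhi^{k+1})-\nabla h(\hPsi^k)\bigr)-\xi_k e_1=0$ together with $e_1^\top\hPhi^{k+1}=0$, exactly as in \cref{eq:proxKKT1,eq:proxKKT2}. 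Using $\nabla G(\hPhi)=D\hPhi$ from \cref{eq:Grad} and the explicit gradient $\nabla h(x)=(a\|x\|^2+b)x$ of the (P4) kernel, the stationarity equation becomes, after multiplying through by $\alpha_k$, a linear system in $\hPhi^{k+1}$ whose coefficient matrix is $\alpha_k D+(a\|\hPhi^{k+1}\|^2+b)I$.

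Second I would eliminate the multiplier $\xi_k$ exactly as in the (P2) case. Taking the inner product of the stationarity equation with $e_1$ and using the four facts $e_1^\top\hPhi^{k+1}=0$ (feasibility), $e_1^\top D\hPhi^{k+1}=(D\hPhi^{k+1})_1=D_{11}(e_1^\top\hPhi^{k+1})=0$ (because $D$ is diagonal), $e_1^\top\hPsi^k=0$ (the hypothesis), and hence $e_1^\top\nabla h(\hPsi^k)=(a\|\hPsi^k\|^2+b)\,e_1^\top\hPsi^k=0$, all terms except those carrying $\nabla F(\hPsi^k)$ and $\xi_k$ drop out, giving $\xi_k=e_1^\top\nabla F(\hPsi^k)$. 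Substituting this back and recognizing $\nabla F(\hPsi^k)-\bigl(e_1^\top\nabla F(\hPsi^k)\bigr)e_1=(I-e_1e_1^\top)\nabla F(\hPsi^k)=\calP_1\nabla F(\hPsi^k)$ yields $\bigl[\alpha_k D+(a\|\hPhi^{k+1}\|^2+b)I\bigr]\hPhi^{k+1}=\nabla h(\hPsi^k)-\alpha_k\calP_1\nabla F(\hPsi^k)$, which is the claimed formula once we set $p^*=\|\hPhi^{k+1}\|^2$.

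The genuinely new point, and the main obstacle, is the self-consistency hidden in this identity: the coefficient matrix depends on $p^*=\|\hPhi^{k+1}\|^2$, so the formula determines $\hPhi^{k+1}$ only implicitly, through the scalar equation $p^*=r(p^*)$, where $r(p):=\bigl\|[\alpha_k D+(ap+b)I]^{-1}v\bigr\|^2$ and $v:=\nabla h(\hPsi^k)-\alpha_k\calP_1\nabla F(\hPsi^k)$ is fixed. I would close the argument by establishing that $r$ has a unique nonnegative fixed point. Because $D$ is diagonal with nonnegative entries, $M(p):=\alpha_k D+(ap+b)I$ is diagonal with strictly increasing (in $p$, since $a>0$) positive diagonal entries, so $r(p)=\sum_i |v_i|^2/M_{ii}(p)^2$ is continuous and strictly decreasing on $[0,\infty)$ with $r(0)>0$ (when $v\neq0$) and $r(p)\to0$ as $p\to\infty$. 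Comparing $r$ with the identity map $p\mapsto p$, which is continuous, strictly increasing, starts at $0$ and diverges, the intermediate value theorem gives exactly one crossing $p^*$; the degenerate case $v=0$ forces $\hPhi^{k+1}=0$ and $p^*=0$. This fixed point is precisely the $p^*$ in the statement, and feeding it back through $M(p^*)^{-1}v$ recovers the unique minimizer, completing the proof.
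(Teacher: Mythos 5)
Your proposal is correct and follows essentially the same route as the paper: write the KKT system for \cref{L4:sub}, eliminate the multiplier by pairing with $e_1$ using $e_1^\top D\hPhi^{k+1}=0$ and $e_1^\top\nabla h(\hPsi^k)=0$ to get $\xi_k$ proportional to $e_1^\top\nabla F(\hPsi^k)$, and then reduce to the scalar fixed-point equation $p=r(p)$. Your uniqueness argument (strict monotonicity of $r$ plus the intermediate value theorem, with the degenerate case $v=0$ handled separately) is just a repackaging of the paper's computation that $R(p)=r(p)-p$ satisfies $R(0)\geq 0$, $R'(p)<0$, and $R(p)\to-\infty$.
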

\begin{proof}
The KKT conditions of \cref{L4:sub} imply that there exists a Lagrange multiplier $\xi_k$ such that $(\hPhi^{k+1},\xi_k)$ satisfies
	\begin{align}
	\label{KKT1}
	&\alpha_k \nabla G(\hPhi^{k+1}) + \alpha_k \nabla F(\hPsi^k) + \nabla h(\hPhi^{k+1}) - \nabla h(\hPsi^k) -\xi_k e_1= 0,\\
	\label{KKT2}
	& e_1^\top \hPhi^{k+1} = 0.
	\end{align}
	 Since $ e_1^\top \hPhi^k = 0 $  and $ \nabla h(x) = (a\|x\|^2+b)x $, \cref{KKT2}
	 and \cref{eq:Grad} imply
	\begin{align*}
	 e_1^\top \nabla G(\hPhi^{k+1}) = e_1^\top (D\hPhi^{k+1}) = 0,\quad e_1^\top \nabla h(\hPsi^k) = (a\|\hPsi^k\|^2+b)e_1^\top \hPsi^k = 0,
	\end{align*}
	where $D$ is defined in \cref{eq:Grad}.
	Substituting the above equalities into  \cref{KKT1} implies $\xi_k = \alpha_k e_1^\top \nabla F(\hPsi^k)$.
	Denote 
	\begin{align*}
	p:=\|\hPhi^{k+1}\|^2 \geq 0,\quad  \beta :=  \nabla h(\hPsi^k) - \alpha_k \nabla F(\hPsi^k) + \xi e_1 = \nabla h(\hPsi^k) - \alpha_k \calP_1\nabla F(\hPsi^k).
	\end{align*}
	From \cref{KKT1}, we obtain a fixed point problem with respect to $ p $
	\begin{align}\label{fixPoint}
	p=\|\hPhi^{k+1}\|^2  = \|[D+(ap+b)I]^{-1}\beta\|^2 := r(p).
	\end{align}
	Let $ R(p) = r(p) - p $. Then $ R(0) = \|(D+bI)^{-1}\beta\|^2 \geq 0 $, $R(p)\to-\infty$ as $p\to\infty$ and 
	\begin{align*} R'(p) = -2a\sum_{i=1}^n\dfrac{\beta_i^2}{(D_{ii} + ap + b)^3} - 1 <0, \quad \forall p \geq 0 ,
	\end{align*}
	there is an unique zero $ p^*\geq 0 $ of $ R(p) $, i.e. $ p^* = r(p^*)  $. Thus,	
	\begin{align*}
	\hPhi^{k+1} = [\alpha_k D+(ap^*+b)I]^{-1}(\nabla h(\hPsi^k) - \alpha_k \calP_1\nabla F(\hPsi^k)).
	\end{align*}
\end{proof}
It is noted that the fixed point equation \cref{fixPoint} is a nonlinear scalar
equation which can efficiently solved by many existing solvers. The detailed algorithm is given in \cref{alg:adapAPG} with $K=4$.

\begin{algorithm}[!pbht]
	\caption{AA-BPG-K method for PFC model}
	\label{alg:adapAPG}
	\begin{algorithmic}[1]
		\REQUIRE $\hPhi^1=\hPhi^0$, $ \alpha_0 >0 $, $w_0\in [0,1]$, $\rho\in (0,1)$, $ \eta,c,\bar w>0$ and $k=1$.
		\WHILE { stop criterion is not satisfied}
		\STATE Update $\hPsi^k = \hPhi^k - w_k(\hPhi^k-\hPhi^{k-1})$
		\STATE Estimate $ \alpha_k $ by \cref{alg:esalpha}
		\IF {$K=2$}
    	\STATE Calculate $z^k=\left(\alpha_k D+ I\right)^{-1}\left(\hPsi^k  -\alpha_k\calP_1\nabla F(\hPsi^k)\right)$
    	\ELSIF {$K=4$}
    	\STATE Calculate the fixed point of \cref{fixPoint}.
    	\STATE Calculate  $z^k=	[\alpha_k D + (ap^*+b)I]^{-1}(\nabla h(\hPsi^k) - \alpha_k \calP_1\nabla F(\hPsi^k))$
    	\ENDIF
		\IF {$E(\hPhi^k)-E(z^{k})\geq c\|\hPhi^k-z^{k}\|^2$ }
		\STATE $ \Phi^{k+1} = z^k $ and update $w_{k+1}\in [0,\bar w]$.
		\ELSE
		\STATE $ \Phi^{k+1} = \Phi^k $ and reset $w_{k+1}=0$.
		\ENDIF	
		\STATE $ k =k+1 $.
		\ENDWHILE	
	\end{algorithmic}
\end{algorithm}

\subsection{Convergence analysis for \cref{alg:adapAPG}}
The convergence analysis can be directly applied for \cref{alg:adapAPG} if the assumptions 
required in \cref{thm:convergence} hold. We first show that the energy function $E$ in PFC model 
satisfies \cref{assum1} and \cref{assum3}. Then,  \cref{assum2} is analyzed for Case (P2) and Case (P4) independently. 
\begin{lemma}\label{PFCproperty1}
	Let $ E_0 = F(\hPhi) + G(\hPhi) $ and $ E(\hPhi)  = E_0(\hPhi)+\delta_{\mathcal{S}}(\hPhi)$ be the energy
	functional which is defined in \cref{eq:PFC1}.  Then, it satisfies
	\begin{enumerate}
		\item $E$ is bounded below and the sub-level set $\calM(\hPhi^0)$ is compact for any $\hPhi^0\in\cS$.
		\item $E$ is a KL function, and thus satisfies \cref{assum3}.
	\end{enumerate}
\end{lemma}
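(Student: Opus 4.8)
The plan is to treat the two claims separately, since they rest on different structural features of the discretized energy. Throughout I identify $\bbC^N$ with $\bbR^{2N}$, so that $G$ and $F$ become real polynomials in the real and imaginary parts of $\hPhi$. I will repeatedly use that the interaction energy is a positive semidefinite quadratic form: by \cref{eq:Grad}, $\nabla G(\hPhi)=D\hPhi$ with $D$ diagonal and nonnegative, so $G(\hPhi)=\tfrac12\hPhi^{*}D\hPhi\ge 0$.

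For item 1 the decisive observation is that the bulk energy decouples into a pointwise one-variable polynomial once we pass to physical space. Writing $u=\calF_N\hPhi$ for the vector of (real) grid values and using the orthonormality \cref{eq:AP:orth} in the form of a discrete Parseval identity, I get $F(\hPhi)=\tfrac1N\sum_j p(u_j)$ with $p(t)=\tfrac{\varepsilon}{2}t^2-\tfrac{\kappa}{3}t^3+\tfrac14 t^4$, and $\|\hPhi\|^2=\tfrac1N\sum_j u_j^2$. Since $p$ has positive leading coefficient, there is a constant $C>0$ with $p(t)\ge \tfrac18 t^4 - C$, whence $F(\hPhi)\ge \tfrac{1}{8N}\sum_j u_j^4 - C$. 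The power-mean (Cauchy--Schwarz) inequality $\tfrac1N\sum_j u_j^4\ge\big(\tfrac1N\sum_j u_j^2\big)^2$ then gives $F(\hPhi)\ge \tfrac18\|\hPhi\|^4 - C$. Combined with $G\ge 0$, this yields simultaneously the lower bound $E\ge -C$ (as $E=+\infty$ off $\cS$) and the coercivity $E_0(\hPhi)\to+\infty$ as $\|\hPhi\|\to\infty$. Closedness of the sub-level sets follows from lower semicontinuity of $E$, since $E_0$ is a continuous polynomial and $\delta_{\cS}$ is lower semicontinuous because $\cS$ is a closed affine subspace. A closed bounded subset of the finite-dimensional space is compact, proving item 1.

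For item 2 I would take the semi-algebraic route of \cite{bolte2014proximal}. After the identification $\bbC^N\cong\bbR^{2N}$, the function $E_0=F+G$ is a real polynomial and hence semi-algebraic, while $\delta_{\cS}$ is the indicator of the affine subspace $\cS=\{\hPhi:e_1^\top\hPhi=0\}$, which is a semi-algebraic set, so its indicator is semi-algebraic. Finite sums of semi-algebraic functions are semi-algebraic, so $E=E_0+\delta_{\cS}$ is a proper lower semicontinuous semi-algebraic function, and every such function satisfies the KL inequality by \cite{bolte2014proximal}. This gives \cref{assum3}.

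The main obstacle is the coercivity step in item 1: the quartic term is a fourth-order convolution in Fourier space and is not manifestly coercive there, so the essential move is to transform to physical space, where $F$ separates into a sum of one-variable quartics and the domination of the quadratic and cubic terms is transparent. Once this transformation and the accompanying discrete Parseval identity are in place, the remaining estimates are elementary, and item 2 follows immediately from the semi-algebraicity of polynomials and affine indicator functions.
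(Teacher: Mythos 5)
Your proposal is correct and follows essentially the same route as the paper's own proof: item 1 via coercivity of $F$ together with continuity and closedness of $\cS$ (closed level sets bounded in finite dimension, hence compact), and item 2 via semi-algebraicity of the polynomial-plus-affine-indicator and the KL property from \cite{bolte2014proximal}. The only substantive difference is that the paper simply \emph{asserts} the coercivity $F(\hPhi)\to+\infty$, whereas you supply a proof via the physical-space/Parseval reduction $F(\hPhi)=\tfrac1N\sum_j p(u_j)$ and the power-mean bound $F(\hPhi)\geq\tfrac18\|\hPhi\|^4-C$ --- a worthwhile addition, since coercivity is not evident from the Fourier-space convolution form and does rely on the grid values $u_j$ being real, i.e.\ on the Hermitian-symmetry constraint that both you and the paper implicitly impose on $\hPhi$.
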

\begin{proof}
	From the continuity and the coercive property of $ F $, i.e. $ F(\hPhi)\to +\infty $ as $ \hPhi\to \infty $,
	the sub-level set $ \calS_0:=  \{\hPhi:E_0(\hPhi) \leq E_0(\hPhi^0)\} $ is compact for any $ \hPhi^0 $. 
	Together with $ \calS $ is closed, it follows that $ \calM(\hPhi^0) = \calS\cap\calS_0 $ is compact for any $ \hPhi^0 $.
	
	Moreover, according to Example 2 in\,\cite{bolte2014proximal}, it is easy to know that $ E(\hPhi)  $ is semi-algebraic function, then it is KL function by Theorem 2 in \,\cite{bolte2014proximal}.
\end{proof}
\begin{lemma}\label{PFCproperty2}
	Let $F(\hPhi)$ be defined in \cref{min:finite}. Then, we have 
	\begin{enumerate}
		\item If $h$ is chosen as (P2) in \cref{Kernal_h}, then $F$ is relative smooth with respect to $h$ in $\calM$ for any compact set $\calM$.
		\item If $h$ is chosen as (P4) in \cref{Kernal_h}, then $F$ is relative smooth with respect to $h$.
	\end{enumerate}
\end{lemma}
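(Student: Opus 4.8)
The plan is to verify the positive semidefinite condition \cref{eqn:relsmooth} of \cref{def:relsmooth} directly, namely $R_f\nabla^2 h(\hPhi)-\nabla^2 F(\hPhi)\succeq 0$, using the explicit Hessian $\nabla^2 F(\hPhi)=\calF_N^{-1}\Lambda^{(')}\calF_N$ from \cref{eq:Hessian}. Here $\Lambda^{(')}$ is the diagonal matrix collecting the pointwise second derivatives $\varepsilon-2\kappa\phi_i+3\phi_i^2$ of the bulk energy density, evaluated at the physical-space grid values $\phi$ associated to $\hPhi$ by the discrete transform. Taking $\calF_N$ unitary, this yields $\|\nabla^2 F(\hPhi)\|=\|\Lambda^{(')}\|=\max_i|\varepsilon-2\kappa\phi_i+3\phi_i^2|$, which is the quantity I would estimate in both cases.

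For Case (P2), $h(x)=\frac{1}{2}\|x\|^2$ so $\nabla^2 h\equiv I$ and the condition reduces to $R_f\geq\|\nabla^2 F(\hPhi)\|$. Since $F\in C^2$, the map $\hPhi\mapsto\|\nabla^2 F(\hPhi)\|$ is continuous and hence bounded on the compact set $\calM$; I would simply take $R_f=\sup_{\hPhi\in\calM}\|\nabla^2 F(\hPhi)\|<+\infty$. This constant depends on $\calM$, which is precisely why (P2) delivers relative smoothness only on compact sets.

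For Case (P4), I would first record $\nabla^2 h(\hPhi)\succeq(a\|\hPhi\|^2+b)I$ with $\lambda_{\min}(\nabla^2 h(\hPhi))=a\|\hPhi\|^2+b$, which follows from $\nabla^2 h(\hPhi)=(a\|\hPhi\|^2+b)I+2a\,\hPhi\hPhi^\top$. I would then bound the Hessian of $F$ by
\begin{equation*}
\|\nabla^2 F(\hPhi)\|\leq|\varepsilon|+2|\kappa|\,\|\phi\|_\infty+3\|\phi\|_\infty^2\leq|\varepsilon|+2|\kappa|\,\|\hPhi\|+3\|\hPhi\|^2,
\end{equation*}
where the second step uses the Parseval-type comparison $\|\phi\|_\infty\leq\|\phi\|_2=\|\hPhi\|$. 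Since a sufficient condition for \cref{eqn:relsmooth} is $R_f\,\lambda_{\min}(\nabla^2 h(\hPhi))\geq\|\nabla^2 F(\hPhi)\|$, it suffices to pick $R_f$ so that $R_f(a t^2+b)\geq|\varepsilon|+2|\kappa|t+3t^2$ for all $t=\|\hPhi\|\geq 0$; rearranged this is the scalar inequality $(R_f a-3)t^2-2|\kappa|t+(R_f b-|\varepsilon|)\geq 0$, which holds for every $t\geq 0$ once $R_f$ is large enough to make the leading coefficient positive and the discriminant nonpositive. The decisive point is that both $\nabla^2 h$ and $\nabla^2 F$ grow quadratically in $\|\hPhi\|$, so this $R_f$ is global---independent of any bounded domain---and of the promised $O(1)$ size.

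The main obstacle, and the only step beyond bookkeeping, is the quadratic bound on $\|\nabla^2 F(\hPhi)\|$ in terms of $\|\hPhi\|$. It rests on two facts: that $\nabla^2 F$ acts as a pointwise multiplication operator in physical space, so its spectral norm equals the $\ell^\infty$ norm of the symbol $\varepsilon-2\kappa\phi+3\phi^2$; and that the physical-space sup norm is controlled by the coefficient $\ell^2$ norm via $\|\phi\|_\infty\leq\|\hPhi\|$ under a unitary normalization of $\calF_N$. Securing this comparison with an $\hPhi$-independent constant is what makes the quadratic growth of $h$ in (P4) match that of $F$ exactly, and therefore yields a global relative-smoothness constant; a non-unitary normalization would only introduce a harmless dimensional factor.
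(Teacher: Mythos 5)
Your proof is correct, but for case (P4) it takes a genuinely different route from the paper. For (P2) you and the paper argue identically: $\nabla^2 F$ is bounded on any compact set, so $R_f=\sup_{\hPhi\in\calM}\|\nabla^2 F(\hPhi)\|$ works. For (P4), the paper writes $F$ as a fourth-degree polynomial $F(\hPhi)=\sum_{k=2}^4\langle\mathcal{A}_k,\hPhi^{\otimes k}\rangle$ and simply invokes Lemma 2.1 of \cite{Bregman}, which asserts relative smoothness of any such quartic with respect to $h(x)=\frac{a}{4}\|x\|^4+\frac{b}{2}\|x\|^2+1$; you instead reprove that fact in this concrete setting, using the pseudo-spectral diagonalization $\nabla^2 F(\hPhi)=\calF_N^{-1}\Lambda^{(')}\calF_N$ from \cref{eq:Hessian}, the lower bound $\nabla^2 h(\hPhi)=(a\|\hPhi\|^2+b)I+2a\,\hPhi\hPhi^\top\succeq(a\|\hPhi\|^2+b)I$, and a scalar quadratic comparison in $t=\|\hPhi\|$. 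The paper's citation is shorter and covers any quartic polynomial with no structural assumptions, while your argument is self-contained, yields explicit constants, and makes visible exactly why the quartic $h$ is the right choice: the Hessians of $F$ and $h$ both grow quadratically in $\|\hPhi\|$, so the leading-coefficient-plus-discriminant condition on $(R_fa-3)t^2-2|\kappa|t+(R_fb-|\varepsilon|)$ closes globally. One caveat you anticipated but should state precisely: with the paper's (non-unitary) normalization of $\calF_N$, the comparison $\|\phi\|_\infty\leq\|\hPhi\|$ should be replaced by $\|\phi\|_\infty\leq\|\hPhi\|_1\leq\sqrt{N}\,\|\hPhi\|$, so your global $R_f$ acquires a factor of order $N$ --- harmless for the lemma itself since the discretization is fixed, though it means the $O(1)$ size of $R_f$ claimed in the paper's remark refers to the physical parameters rather than uniformity in the grid size. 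A second minor point: $\hPhi\in\bbC^N$, so your Hessian computations should be read over $\bbR^{2N}$, which changes none of the estimates.
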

\begin{proof}
	Denote $\hPhi^{\otimes k}:= \hPhi\otimes\hPhi\otimes\cdots\otimes\hPhi$
	where $\otimes$ is the tensor product. Then, $F(\hPhi)$ is the $4th$-degree polynomial,
	i.e. $F(\hPhi) = \sum_{k=2}^4\langle\mathcal{A}_k,\hPhi^{\otimes k}\rangle$ where the $kth$-degree monomials are arranged as a $kth$-order tensor $\mathcal{A}_k$.
	For any compact set $\calM$, $\nabla^2 F$ is bounded and thus $F$ is
	relative smooth with respect to any polynomial function in $\calM$ which
	includes case (P2). When $h$ is chosen as (P4), according to Lemma 2.1
	in\,\cite{Bregman}, there exists $R_F>0$ such that $F(\hPhi)$ is $R_F$-relative smooth with respect to $ h(x) $.
\end{proof}
Combining \cref{PFCproperty1}, \cref{PFCproperty2} with \cref{Thm:BAPGSeqconver}, we can directly establish the convergence of \cref{alg:adapAPG} .
\begin{thm}\label{thm:adapAPG}
	Let $ E(\hPhi)  = F(\hPhi) + G(\hPhi)+\delta_{\mathcal{S}}(\hPhi)$ be the energy
	function which is defined in \cref{eq:PFC1}. The following results hold.
	\begin{enumerate}
		\item Let $\{\hPhi^k\}$ be the sequence generated by \cref{alg:adapAPG} with $K=2$. If $\{\hPhi^k\}$ is bounded,  
		then $\{\hPhi^k\}$ converges to some $\hPhi^*$  and $0\in\partial E(\hPhi^*)$.
		\item Let $\{\hPhi^k\}$ be the sequence generated by \cref{alg:adapAPG} with $K=4$. 
		Then, $\{\hPhi^k\}$ converges to some $\hPhi^*$  and $0\in\partial E(\hPhi^*)$.
	\end{enumerate}
\end{thm}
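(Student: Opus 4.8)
The plan is to verify that the three structural assumptions of the abstract theory hold for the PFC energy and then invoke \cref{Thm:BAPGSeqconver}. Under the identification \cref{eq:PFC1}, namely $f=F$ and $g=G+\delta_{\cS}$, the problem \cref{min:finite} is an instance of \cref{GeneralFormulation}, and \cref{alg:adapAPG} is exactly \cref{alg:ABPG} with the Bregman distance induced by $h$ in \cref{Kernal_h} together with the closed-form subproblem solutions of \cref{lemma:l2} (for $K=2$) and \cref{BProx_Norm4} (for $K=4$). \cref{PFCproperty1} already supplies \cref{assum1} (bounded below, compact sub-level set) and \cref{assum3} (the KL property), so the only remaining ingredient in each case is \cref{assum2}, the relative smoothness, which is where the two cases diverge.

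For $K=4$ the argument is immediate. By \cref{PFCproperty2}(2) the quartic kernel makes $F$ globally $R_F$-relative smooth, so \cref{assum2} holds with a single finite constant on the whole domain. All hypotheses of \cref{Thm:BAPGSeqconver} are then met; boundedness of $\{\hPhi^k\}$ is automatic since \cref{coro:suff} forces $\{\hPhi^k\}\subset\calM(\hPhi^0)$, and the theorem yields convergence of the whole sequence to a point $\hPhi^*$ with $0\in\partial E(\hPhi^*)$.

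For $K=2$ the kernel is only quadratic, so \cref{PFCproperty2}(1) delivers relative smoothness merely on compact sets, with a constant depending on the set; this is the reason the statement must assume $\{\hPhi^k\}$ bounded, which severs the circularity between ``iterates stay bounded'' and ``relative smoothness holds.'' Assuming boundedness, I would first enlarge the iterates to a single convex compact set $\calM'$ containing all $\hPhi^k$, all extrapolated points $\hPsi^k=\hPhi^k+w_k(\hPhi^k-\hPhi^{k-1})$ (bounded because $w_k\in[0,\bar w]$), and all subproblem candidates $z^k=(\alpha_kD+I)^{-1}(\hPsi^k-\alpha_k\calP_1\nabla F(\hPsi^k))$ (bounded because $\alpha_k\in[\alpha_{\min},\alpha_{\max}]$ and $\nabla F$ is continuous). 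On $\calM'$ the function $F$ is $R_F$-relative smooth for some finite $R_F$. The key observation is that every inequality in the convergence chain—\cref{lem:rel-sm-Breg}, the descent \cref{lem:BPGsuffdesc}, the subgradient bound \cref{lem:gradbound}, and ultimately \cref{thm:convergence} and \cref{Thm:BAPGSeqconver}—invokes relative smoothness only at pairs of points lying in $\calM'$, and since $\calM'$ is convex the segment joining any $z^k$ and $\hPsi^k$ stays inside it. Hence the entire proof of \cref{Thm:BAPGSeqconver} runs verbatim with the local constant $R_F$, and convergence to a stationary point follows.

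The main obstacle is precisely this localization for $K=2$: one must confirm that no point at which the relative-smoothness inequality is used can escape the compact set on which the constant was defined, i.e.\ that the convex compact hull $\calM'$ simultaneously captures the iterates, the extrapolated points, and the prox outputs, with $\alpha_k$ uniformly bounded away from $0$ and $\infty$. Once this uniformity is secured the remainder is routine, and for $K=4$ it is not needed at all because global relative smoothness is already available.
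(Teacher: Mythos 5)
Your proposal is correct and follows essentially the same route as the paper, whose entire proof of \cref{thm:adapAPG} is the one-line observation that \cref{PFCproperty1} and \cref{PFCproperty2} supply \cref{assum1}, \cref{assum3} and \cref{assum2}, so that \cref{Thm:BAPGSeqconver} applies directly. Your explicit localization in the $K=2$ case---collecting the iterates $\hPhi^k$, the extrapolated points $\hPsi^k$, and the prox outputs $z^k$ into one convex compact set on which a finite relative-smoothness constant $R_F$ holds---is a detail the paper leaves implicit (it only remarks that boundedness must be assumed because $F$ is a fourth-order polynomial), and it is precisely the right way to justify invoking \cref{Thm:BAPGSeqconver} under that hypothesis.
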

It is noted that when $h$ is chosen as (P2), we cannot bounded the growth of $F$ as
$F$ is a fourth order polynomial. Thus, the boundedness assumption of $\{\hPhi^k\}$
is imposed which is similar to the requirement in the semi-implicit
scheme\,\cite{shen2010numerical}.

\section{Newton-PCG method}\label{sec:second-order}
Despite the fast initial convergence speed of the gradient based methods, the tail
convergence speed becomes slow. Therefore it can be further locally accelerated by the
feature of Hessian based methods.
In this section, we design a practical Newton method to solve the PFC models
\cref{min:finite} and provide a hybrid accelerated framework. 
\subsection{Our method}
Define $ Z := [0,I_{N-1}]^\top$, any vector $ \hPhi $ that satisfies the constraint $
e_1^\top \hPhi  = 0$ has the form	of $ \hPhi = ZU $ with $ U\in \mathbb{C}^{N-1} $.
Since $ Z^\top Z =  I_{ N-1}$, we can also obtain $ U $ from $ \hPhi $ by $ U =
Z^\top \hPhi $. Therefore, the problem \cref{min:finite} is equivalent to
\begin{align}
\min_{U\in \mathbb{C}^{N-1} } E(ZU) = G(ZU) + F(ZU).
\label{pro:unconstrain}
\end{align}
Let $\tE(U) := E(ZU),\, \tG(U) := E(ZU), \,\tF(U) := F(ZU) $,
we have the following facts
\begin{equation}\label{tE}
\begin{aligned}
&\tg :=\nabla \tE(U) = Z^\top \nabla E(ZU) = Z^\top g,
\\
&\tJ:=\nabla^2 \tE(U) = Z^\top \nabla^2 E(ZU)Z = Z^\top \calJ Z,
\end{aligned}
\end{equation}
where $g=\nabla E(ZU)$ and $\calJ=\nabla^2 E(ZU)$.
Therefore, finding the steady states of PFC models is equivalent to solving the nonlinear equations
\begin{equation}
\nabla \tE(U) = \bzero.
\end{equation}
Due to the non-convexity of $\tE(U)$, the Hessian matrix $\tJ$ may not be 
positive definite and thus a regularized Newton method is applied. 

\noindent{\underline{\bf Computing the Newton direction.}} 
Denote $ \tJ_k := \nabla^2 \tE(U^k) $ and $ \tg_k :=\nabla \tE(U^k)$, 
we find the approximated Newton direction $d_k$ by solving
\begin{equation}\label{NewtonEq}
(\tJ_k + \mu_k I)d_k = -\tg_{k},
\end{equation}
where regularized parameter $ \mu_k $ is chosen  as
\begin{equation}\label{def:mu}
  -c_1 \min\{0, \lambda_{\min} (\tJ_k) \} + c_2 \|\tg_k\|\leq \mu_k\leq \bar{\mu} < +\infty  ~(c_1\geq 1,c_2>0).
\end{equation}
Thus, \cref{NewtonEq} is symmetric, positive definite linear system. 
To accelerate the convergence, an  preconditioned conjugate gradient (PCG) method is adopted.
More specifically, in $k$-th step, we terminate the PCG iterates 
whenever $\|(\tJ_k + \mu_k I)d_k +\tg_{k}\|\leq\eta_k$ in which $\eta_k$ is set as
\begin{align}\label{PCGtol}
\eta_k = \tau\min\{1, \|\tg_{k}\|\},\quad  0<\tau <1,
\end{align}
and the preconditioner $M_k$ is adaptively obtained by setting 
\begin{align}\label{PCG}
M_k = Z(H_k + \mu_k I)^{-1}Z^\top\quad\mbox{with}\quad H_k = D +\delta_k I 
\end{align}
where $D$ is from \cref{eq:Grad} and some $\delta_k>0$.  Let $A=\tJ_k+\mu_k I $,
$b=-\tg_k$ and $M=M_k$, the  PCG method is given in \cref{alg:PCG}, where $
\|x\|_A:= \langle x, Ax\rangle $.
\begin{algorithm}[!pbht]
	\caption{ PCG($\eta$) for solving $ Ax = b $.}
	\label{alg:PCG}
	\begin{algorithmic}[1]
		\REQUIRE $ A, b, \eta, k_{max}$, preconditioner $ M $.
		\STATE Set $ x^0 =0, r_0 =Ax^0- b = -b, p_0 = -M^{-1}r_0, i=0$.
		\WHILE{$ \|r_i\| > \eta $ or $ i<k_{max} $}
		\STATE $ \alpha_{i+1} = \dfrac{\|r_i\|_{M^{-1}}^2}{\| p_i\|_{A}^2}$
		\STATE $ x^{i+1} = x^i + \alpha_{i+1} p_i $
		\STATE $ r_{i+1} = r_i +\alpha_{i+1}Ap_i $
		\STATE $ \beta_{i+1} = \dfrac{\|r_{i+1}\|_{M^{-1}}^2}{\|r_i\|_{M^{-1}}^2} $
		\STATE $ p_{i+1} = -M^{-1}r_{i+1}+\beta_{i+1}p_{i} $
		\STATE $ i = i+1 $
		\ENDWHILE
	\end{algorithmic}
\end{algorithm}

\noindent{\underline{\bf Computing the step size $t_k$.}} Once the Newton direction $d_k$ is obtained, 
the line search technique is applied for finding an appropriate step size $t_k$ that 
 satisfies the following inequality:
\begin{align}
\tE(U^k + t_k d_k) \leq \tE(U^k) + \nu t_k \la \tg_k, d_k\ra,~0<\nu<1.
\label{ieq:line-search}
\end{align}
The existence of $ t_k >0$ that satisfies \cref{ieq:line-search} is given in  \cref{lemma:bound-t_k}.
Then, $ U^{k+1} $ is updated by $U^{k+1} =  U^{k} + t_kd_k$. Our proposed  algorithm is summarized in  \cref{alg:NewtonPCG}. 
\begin{algorithm}[!pbht]
	\caption{Newton-PCG method}
	\begin{algorithmic}[1]
		\REQUIRE$ U^0,\varepsilon,\bar{\mu}, c_1\geq 1, c_2>0, 0<\nu,\rho,\tau <1;$		
		\STATE$  k =0, \tg_0 = \nabla \tE(U^0)$;
		\WHILE {stop criterion is not satisfied}
		\STATE Choose $-c_1 \min\{0, \lambda_{\min} (\tJ_k) \} + c_2 \|\tg_k\|\leq \mu_k\leq \bar{\mu} $;
		\STATE Update $\eta_k = \tau\min(1,\|\tg_k\|)$.
		\STATE Find direction $ d_k $ by solving \cref{NewtonEq} via PCG($\eta_k$) using \cref{alg:PCG};
		\FOR{$ n = 0,1,2,\cdots $}
		\STATE $ t_k = \rho^n; $
		\IF {$ \tE(U^k + t_k d_k) \leq \tE(U^k) + \nu t_k\la \tg_k,d_k\ra $}
		\STATE \textbf{Break};
		\ENDIF
		\ENDFOR
		\STATE $ U^{k+1} = U^k + t_kd_k$;
		\STATE $ k = k+1; $
		\ENDWHILE
	\end{algorithmic}
	\label{alg:NewtonPCG}
\end{algorithm}

\subsection{Convergence analysis for \cref{alg:NewtonPCG}}
We first establish several properties related to the direction $d_k$ computed by the PCG method. 
\begin{lemma}	\label{Them:innerprod} 
	Consider a linear system $Ax=b$ where $A$ is symmetric and positive definite. Let $\{x^i\}$ be the sequence generated by \cref{alg:PCG}, it satisfies
	\begin{align}
	\dfrac{1}{\lambda_{\max}(A)} \leq \dfrac{\la x^i, b\ra}{\|b\|^2}\leq \dfrac{1}{\lambda_{\min}(A)},\quad \forall  i =1,2,\cdots.
	\end{align}
\end{lemma}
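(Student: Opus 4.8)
The plan is to read the PCG iterates through their variational characterization and reduce both inequalities to comparing the quadratic model $\phi(x)=\tfrac12\langle x,Ax\rangle-\langle x,b\rangle$ at a few well-chosen points. First I would record the standard facts about \cref{alg:PCG}: the directions $\{p_j\}$ are $A$-conjugate, the span $\mathrm{span}\{p_0,\dots,p_{i-1}\}$ equals the (preconditioned) Krylov subspace $\mathcal{K}_i$ generated from $p_0=M^{-1}b$, and $x^i$ is exactly the minimizer of $\phi$ over $\mathcal{K}_i$, equivalently the residual $r_i=Ax^i-b$ is Euclidean-orthogonal to $\mathcal{K}_i$. Since $x^0=0$ forces $x^i\in\mathcal{K}_i$, this orthogonality gives the pivotal identity $\langle x^i,r_i\rangle=0$, i.e. $\langle x^i,b\rangle=\langle x^i,Ax^i\rangle=\|x^i\|_A^2\ge 0$, so that the middle quantity is well defined and $\phi(x^i)=-\tfrac12\langle x^i,b\rangle$.

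For the upper bound I would compare $x^i$ with the exact solution $x^\star=A^{-1}b$. Because $x^i$ minimizes $\phi$ over a subspace of $\mathbb{R}^N$ while $x^\star$ minimizes it globally, we have $\phi(x^i)\ge\phi(x^\star)=-\tfrac12\langle b,A^{-1}b\rangle$. Substituting $\phi(x^i)=-\tfrac12\langle x^i,b\rangle$ and rearranging yields $\langle x^i,b\rangle\le\langle b,A^{-1}b\rangle$, and bounding $A^{-1}\preceq\lambda_{\min}(A)^{-1}I$ gives the right-hand inequality $\langle x^i,b\rangle\le\|b\|^2/\lambda_{\min}(A)$.

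For the lower bound I would instead exploit that $\mathcal{K}_i$ already contains the first direction $p_0$, so minimizing $\phi$ over the whole subspace can only beat the one-dimensional minimization along $p_0$: $\phi(x^i)\le\min_t\phi(tp_0)=-\tfrac12\,\langle p_0,b\rangle^2/\langle p_0,Ap_0\rangle$. When $M=I$ (so $p_0=b$) this reads $\langle x^i,b\rangle\ge\|b\|^4/\langle b,Ab\rangle$, and $\langle b,Ab\rangle\le\lambda_{\max}(A)\|b\|^2$ delivers $\langle x^i,b\rangle\ge\|b\|^2/\lambda_{\max}(A)$; combining the two one-sided estimates gives the claim for every $i\ge 1$. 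I expect this lower bound to be the main obstacle, because—unlike the upper bound, which is insensitive to the preconditioner—it relies on the first search direction being aligned with $b$, which is exactly the unpreconditioned case. For a genuine $M$ the same argument only produces the $M^{-1}$-weighted estimate $\langle x^i,b\rangle\ge\langle b,M^{-1}b\rangle^2/\langle M^{-1}b,AM^{-1}b\rangle$, and in fact a one-step example with $A=\mathrm{diag}(1,2)$, $M^{-1}=\mathrm{diag}(\varepsilon,1)$ violates $\langle x^1,b\rangle\ge\|b\|^2/\lambda_{\max}(A)$ as $\varepsilon\to 0$. Hence to make the stated bound rigorous I would either apply it in the unpreconditioned inner product, or restate it with $\|b\|_{M^{-1}}^2$ and the spectrum of $M^{-1}A$, for which the transformed system $M^{-1/2}AM^{-1/2}$ reduces everything to the $M=I$ argument above.
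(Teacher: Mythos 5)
Your proof is correct, and it takes a genuinely different route from the paper's. The paper (Appendix B) works entirely from the recursions in \cref{alg:PCG}: it verifies $r_i=Ax^i-b$ and $\la p_i,b\ra=\|r_i\|_{M^{-1}}^2$, proves monotonicity of $\|e_i\|_A$, and derives the telescoping identity $\la x^i,b\ra=\|e_0\|_A^2-\|e_i\|_A^2$; the upper bound then follows from $\|e_0\|_A^2=\la b,A^{-1}b\ra$, and the lower bound reduces, by monotonicity of $\la x^i,b\ra$, to the explicit first iterate $\la x^1,b\ra=\la M^{-1}b,b\ra^2/\la M^{-1}b,AM^{-1}b\ra$. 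Your variational argument reaches the same two reduction points more directly: subspace optimality of $x^i$ gives $\phi(x^i)\geq\phi(x^\star)$ (the upper bound, equivalent to the paper's telescoping estimate --- indeed your identity $\la x^i,b\ra=\|x^i\|_A^2$ is algebraically the same fact), while $p_0\in\mathrm{span}\{p_0,\dots,p_{i-1}\}$ gives $\phi(x^i)\leq\min_t\phi(tp_0)$, whose value is exactly the paper's $\la x^1,b\ra$. What your route buys is brevity and transparency (no need to reprove Properties I--IV); what it costs is reliance on the standard Krylov-optimality and orthogonality facts for PCG, which the paper in effect reproves by hand.

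More importantly, your skepticism about the lower bound under a genuine preconditioner is vindicated, and your counterexample is valid: with $A=\diag(1,2)$, $M^{-1}=\diag(\varepsilon,1)$, $b=(1,1)^\top$, one computes $\la x^1,b\ra/\|b\|^2=(1+\varepsilon)^2/\bigl(2(\varepsilon^2+2)\bigr)\to 1/4<1/2=1/\lambda_{\max}(A)$, so the stated inequality fails at $i=1$. The paper's proof of the general-$M$ case breaks at exactly the point you identified: in \cref{4.19} it concludes $\frac{1}{\|M^{-1/2}\|\cdot\|A\|\cdot\|M^{-1/2}\|}=\frac{\|M\|}{\|A\|}$, which implicitly uses $\|M^{-1/2}\|^2=\|M\|^{-1}$; in fact $\|M^{-1/2}\|^2=\|M^{-1}\|=\lambda_{\min}(M)^{-1}$ (the identity \cref{eq:6.8} is correct for $M^{1/2}$ but is misapplied to $M^{-1/2}$). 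The correct conclusion of that chain is only $\la x^i,b\ra/\|b\|^2\geq\lambda_{\min}(M)/\bigl(\lambda_{\max}(M)\lambda_{\max}(A)\bigr)$, i.e.\ the claimed bound degraded by the factor $\kappa(M)^{-1}$, consistent with your example where $\kappa(M)=1/\varepsilon\to\infty$. So the lemma as stated is true for $M=I$ (your argument proves it), and for general $M$ it must be repaired exactly as you suggest: either keep the $\kappa(M)$-corrected constant, or restate it with $\|b\|_{M^{-1}}^2$ and $\lambda_{\max}(M^{-1}A)$ via the transformed system $M^{-1/2}AM^{-1/2}$. Either repaired form still yields a quantitative descent direction in \cref{lem:4.5}, provided the preconditioners $M_k$ have uniformly bounded condition numbers, so the downstream convergence analysis of \cref{alg:NewtonPCG} survives with adjusted constants.
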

\begin{proof}
	The proof is in Appendix B.
\end{proof}
Then, we know the $d_k$ is a descent direction from the next lemma.
\begin{lemma}[Descent direction]\label{lem:4.5}
	Let $d_k$ be generated by PCG($\eta_k$) method (\cref{alg:PCG}). If $\|\tg_k\| >0$, then we have 
	\begin{align}
	-\la d_k, \tg_k \ra \geq l_k:=\dfrac{\|\tg_k\|^2}{\lambda_{\max}(\tJ_{k} + \mu_k I)}\quad\mbox{and}\quad \|d_k\|\leq \bar{d} := \dfrac{\tau + 1}{c_2},
	\label{bound2}
	\end{align}
	where $\tau$, $K$, and $ c_1,c_2 $ are defined in 
	\cref{PCGtol}, \cref{Hbound} and \cref{def:mu}, respectively.
\end{lemma}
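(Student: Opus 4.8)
The plan is to establish the two claimed bounds separately, both by exploiting the structure of the PCG iterates together with the stopping rule \cref{PCGtol} and the regularization \cref{def:mu}. Write $A = \tJ_k + \mu_k I$ and $b = -\tg_k$, so that $d_k$ is the approximate solution returned by \cref{alg:PCG} at termination, satisfying the residual bound $\|A d_k - b\| = \|A d_k + \tg_k\| \leq \eta_k$. The first task is the descent inequality $-\la d_k, \tg_k\ra \geq l_k$, and here the natural tool is \cref{Them:innerprod}: applied to the system $A x = b$, it gives $\la d_k, b\ra / \|b\|^2 \geq 1/\lambda_{\max}(A)$, that is, $\la d_k, -\tg_k\ra \geq \|\tg_k\|^2 / \lambda_{\max}(\tJ_k + \mu_k I)$, which is precisely $l_k$. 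The only point requiring care is that $A$ is genuinely symmetric positive definite, so that \cref{Them:innerprod} applies: this is guaranteed because $\tJ_k$ is symmetric and the choice \cref{def:mu} forces $\mu_k \geq -c_1 \min\{0,\lambda_{\min}(\tJ_k)\}$ with $c_1 \geq 1$, which shifts all eigenvalues of $\tJ_k + \mu_k I$ to be strictly positive whenever $\|\tg_k\| > 0$ (since then $c_2\|\tg_k\| > 0$ is added as well).

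The second bound, $\|d_k\| \leq \bar d = (\tau+1)/c_2$, is the substantive half. The idea is to control $\|d_k\|$ through the smallest eigenvalue of $A$. Starting from the triangle inequality on the residual,
\begin{equation*}
\|\tg_k\| - \|A d_k\| \leq \|A d_k - (-\tg_k)\| = \|A d_k + \tg_k\| \leq \eta_k \leq \tau,
\end{equation*}
where the last step uses $\eta_k = \tau\min\{1,\|\tg_k\|\} \leq \tau$. Rearranging and using $\|A d_k\| \geq \lambda_{\min}(A)\|d_k\|$ gives
\begin{equation*}
\lambda_{\min}(A)\,\|d_k\| \leq \|A d_k\| \leq \|\tg_k\| + \tau.
\end{equation*}
Now the regularization lower bound \cref{def:mu} yields $\lambda_{\min}(\tJ_k + \mu_k I) = \lambda_{\min}(\tJ_k) + \mu_k \geq \lambda_{\min}(\tJ_k) - c_1\min\{0,\lambda_{\min}(\tJ_k)\} + c_2\|\tg_k\| \geq c_2\|\tg_k\|$, since $c_1 \geq 1$ makes the first two terms nonnegative. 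Substituting $\lambda_{\min}(A) \geq c_2\|\tg_k\|$ gives $c_2\|\tg_k\|\,\|d_k\| \leq \|\tg_k\| + \tau \leq \|\tg_k\| + \tau\|\tg_k\|$ when $\|\tg_k\| \geq 1$, and one handles the regime $\|\tg_k\| < 1$ analogously using $\eta_k = \tau\|\tg_k\|$ in place of $\tau$; dividing through by $c_2\|\tg_k\|$ produces $\|d_k\| \leq (\tau+1)/c_2 = \bar d$.

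The main obstacle I anticipate is the careful bookkeeping of the two cases in the stopping tolerance $\eta_k = \tau\min\{1,\|\tg_k\|\}$: when $\|\tg_k\| < 1$ the tolerance scales like $\tau\|\tg_k\|$, so the bound $\|\tg_k\| + \eta_k \leq \|\tg_k\| + \tau\|\tg_k\| = (1+\tau)\|\tg_k\|$ divides cleanly by $c_2\|\tg_k\|$ to give exactly $(1+\tau)/c_2$, whereas when $\|\tg_k\| \geq 1$ one uses $\eta_k = \tau$ and must verify $\|\tg_k\| + \tau \leq (1+\tau)\|\tg_k\|$, which holds since $\|\tg_k\|\geq 1$. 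Getting a single uniform constant $\bar d$ that covers both regimes is the only place where a clean argument could slip, so I would organize the proof to treat $\eta_k \leq \tau\|\tg_k\|$ as the effective bound and check the $\|\tg_k\| \geq 1$ case gives no worse a constant. The dependence on $\lambda_{\max}(A)$ in $l_k$ does not need to be made explicit beyond invoking \cref{Them:innerprod}, since the descent direction claim is stated in terms of $\lambda_{\max}(\tJ_k + \mu_k I)$ directly.
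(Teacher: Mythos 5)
Your proposal is correct and takes essentially the same route as the paper: the first bound is exactly the paper's direct application of \cref{Them:innerprod}, and the second bound uses the same mechanism of controlling $\|d_k\|$ by the residual tolerance over $\lambda_{\min}(\tJ_k+\mu_k I)\geq c_2\|\tg_k\|$, except that the paper avoids your two-case bookkeeping by observing at once that $\eta_k=\tau\min\{1,\|\tg_k\|\}\leq\tau\|\tg_k\|$ in both regimes. One cosmetic slip: your displayed inequality $\|\tg_k\|-\|Ad_k\|\leq\|Ad_k+\tg_k\|$ rearranges to a \emph{lower} bound on $\|Ad_k\|$, whereas the upper bound $\|Ad_k\|\leq\|\tg_k\|+\eta_k$ you actually use follows from the other triangle inequality $\|Ad_k\|\leq\|Ad_k+\tg_k\|+\|\tg_k\|$ --- a trivial fix that does not affect the argument.
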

\begin{proof}
	The first inequality is a direct consequence of \cref{Them:innerprod}.
	Moreover, let $r_k = (\tJ_k + \mu_k I)d_k + \tg_k $. By \cref{alg:PCG} and \cref{PCGtol}, we have $\|r_k\|\leq \eta_k\leq \tau\|\tg_k\|$. Then,  
	\begin{align*}
	\|d_k\|& = \|(\tJ_k + \mu_k I)^{-1}(r_k-\tg_k)\|\leq \dfrac{\|r_k-\tg_k\|}{\lambda_{\min}(\tJ_k + \mu_k I)}\leq \dfrac{\|r_k\| + \|g_k\|}{c_2\|g_k\|}\leq \dfrac{\tau + 1}{c_2},
	\end{align*}
	where the second inequality is from \cref{def:mu}.
\end{proof}
\begin{lemma}[Lower bound of $ t_k $]
	\label{lemma:bound-t_k} Let $d_k$ be generated by PCG($\eta_k$) method (\cref{alg:PCG}). If $ \|\tg_k \|\geq \varepsilon>0$, then for any $\nu\in(0,1)$, there exists $M_k>0$ and  \begin{align}
	t_{\max}^k : =\min \left\{
	\frac{2(1-\nu)l_k}{M_k\bar{d}^2},1\right\}.
	\label{tmin}
	\end{align}
	such that the inequality \cref{ieq:line-search} holds for $t_k\in(0,t_{\max}^k]$
	where $l_k$ is defined in \cref{bound2}.
\end{lemma}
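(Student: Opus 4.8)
The plan is to treat \cref{ieq:line-search} as a standard Armijo condition and to derive the lower bound from a second-order Taylor estimate combined with the two bounds on $d_k$ already established in \cref{lem:4.5}. First I would note that since $E\in C^2$ and $Z$ is linear, $\tE(U)=E(ZU)$ is also $C^2$, so $\nabla^2\tE$ is continuous. Because $\|d_k\|\leq\bar d$ by \cref{lem:4.5}, the entire search segment $\{U^k+td_k : t\in[0,1]\}$ lies in a bounded enlargement of the compact sub-level set provided by \cref{assum1}; over this compact region I would define
$$M_k := \sup\nolimits_{U}\|\nabla^2\tE(U)\|,$$
the supremum being taken over that segment (or a fixed compact neighbourhood of the iterates), which is finite by continuity. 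This $M_k$ serves as a local Lipschitz constant for $\nabla\tE$ along the ray.

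Next I would apply the descent-lemma estimate. Writing, via the fundamental theorem of calculus and $\tg_k=\nabla\tE(U^k)$,
$$\tE(U^k+td_k)-\tE(U^k)-t\langle \tg_k, d_k\rangle = \int_0^t \langle \nabla\tE(U^k+sd_k)-\tg_k,\, d_k\rangle\, ds,$$
and bounding the integrand by $\|\nabla\tE(U^k+sd_k)-\tg_k\|\,\|d_k\|\leq M_k\, s\,\|d_k\|^2$, I obtain the quadratic upper bound
$$\tE(U^k+td_k)\leq \tE(U^k)+t\langle \tg_k, d_k\rangle+\tfrac{M_k}{2}t^2\|d_k\|^2.$$

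Then I would impose \cref{ieq:line-search} on this upper bound: subtracting $\tE(U^k)+\nu t\langle\tg_k,d_k\rangle$ and dividing by $t>0$ shows that the Armijo inequality is implied by
$$\tfrac{M_k}{2}\,t\,\|d_k\|^2 \leq (1-\nu)\bigl(-\langle \tg_k, d_k\rangle\bigr).$$
Here I would invoke \cref{lem:4.5}: the right-hand side is at least $(1-\nu)l_k$, while the left-hand side is at most $\tfrac{M_k}{2}t\bar d^2$ since $\|d_k\|\leq\bar d$. Hence this inequality, and therefore \cref{ieq:line-search}, holds whenever $t\leq \frac{2(1-\nu)l_k}{M_k\bar d^2}$. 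Intersecting with the backtracking cap $t\leq 1$ yields exactly $t_{\max}^k$ as stated, valid for every $t_k\in(0,t_{\max}^k]$.

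The main obstacle is not the algebra but making $M_k$ well-defined: I must guarantee that the search ray stays inside a region where the Hessian is uniformly bounded. The hypothesis $\|\tg_k\|\geq\varepsilon$ ensures $l_k>0$ (so the threshold is nontrivial and $d_k\neq\bzero$), while the uniform bound $\|d_k\|\leq\bar d$ confines the ray to a compact set independent of $t\in[0,1]$, which is what legitimizes the local Lipschitz/Hessian bound. A secondary point to confirm is that $U^k$ itself remains in the compact sub-level set throughout the Newton iteration, which follows from the monotone energy decrease enforced by the accepted Armijo steps.
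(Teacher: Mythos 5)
Your proof is correct and takes essentially the same route as the paper's: both apply a second-order Taylor/descent-lemma bound with $M_k=\sup_{V\in\mathcal{V}_k}\|\nabla^2\tE(V)\|$ over the compact search segment, invoke the bounds $-\langle d_k,\tg_k\rangle\geq l_k$ and $\|d_k\|\leq\bar d$ from \cref{lem:4.5}, and solve the resulting quadratic inequality in $t$ to obtain $t_{\max}^k$. The only cosmetic difference is that you use the integral (fundamental-theorem) form of the Taylor remainder, while the paper uses the Lagrange mean-value form with an intermediate point $\xi^t$.
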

\begin{proof}
	By the Taylor expansion, we have
	\begin{equation}\label{in1}
	\tE (U^k + td_k) = \tE (U^k) + t\la \tg_k, d_k\ra + \dfrac{t^2}{2}\la d_k, \nabla^2 \tE (\xi^t) d_k\ra,
	\end{equation}
	where $\xi^t\in\mathcal{V}_k=\{V|V=U^k+td_k, t\in[0,1]\}$. As $\tE\in C^2$, there
	exists $M_k>0$ such that $M_k=\sup\{\|\nabla^2 \tE(V)\|\}|V\in\mathcal{V}_k\}$.
	Then, \cref{in1} and \cref{bound2} imply
	\begin{align}\label{in2}
	\tE (U^k + td_k) 
	&\leq  \tE (U^k) + \nu t\la \tg_k, d_k\ra -(1-\nu)l_kt + \dfrac{M_k \bar d^2 }{2}t^2.
	\end{align}
	Define $Q(t)=(1-\nu)l_kt - \dfrac{M_k \bar d^2 }{2}t^2$, we know $Q(t)\geq0$ for
	all $t\in[0,\dfrac{2(1-\nu)l_k}{M_k\bar d^2}]$ which implies \cref{ieq:line-search} holds for all $t\in(0,t_{\max}^k]$.
\end{proof}

\begin{theorem}\label{them: Newton-conver}
	Let $ \tE $ be defined in \cref{pro:unconstrain} and  $\{U^k\}$ be the infinite sequence generated by \cref{alg:NewtonPCG}. 
	Then $\{U^k\}$ is bounded and has the following property:
	\begin{align}\label{limits}
	\lim\limits_{k\rightarrow+\infty} \|\tg_k \| = 0.
	\end{align}
\end{theorem}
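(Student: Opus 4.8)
The plan is to follow the standard template for a monotone line-search method, leaning entirely on the two structural lemmas already proved for the PCG direction. First I would verify that $\{\tE(U^k)\}$ is nonincreasing. By \cref{lem:4.5}, whenever $\|\tg_k\|>0$ the direction satisfies $\langle \tg_k, d_k\rangle \leq -l_k < 0$, so $d_k$ is a genuine descent direction and the Armijo rule \cref{ieq:line-search} forces $\tE(U^{k+1})\leq \tE(U^k)$. Consequently every iterate lies in the sublevel set $\{U:\tE(U)\leq \tE(U^0)\}$. Since $\tE(U)=E(ZU)$ and the map $U\mapsto ZU$ is a bijection from $\mathbb{C}^{N-1}$ onto $\cS$, this sublevel set is the preimage of the compact set $\calM(\hPhi^0)$ furnished by \cref{PFCproperty1}, hence itself compact. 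This immediately yields the boundedness of $\{U^k\}$ asserted in the theorem.

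Next I would extract a summability estimate. Combining the Armijo inequality with $\langle \tg_k, d_k\rangle \leq -l_k$ gives $\tE(U^k)-\tE(U^{k+1})\geq \nu\, t_k l_k$. Summing over $k$ and using that $\tE$ is bounded below (\cref{assum1}) forces $\sum_k t_k l_k < +\infty$, so in particular $t_k l_k \to 0$. Because $\{U^k\}$ and all line segments $\mathcal{V}_k=\{U^k+t d_k : t\in[0,1]\}$ stay inside a single compact set (here I use $\|d_k\|\leq \bar d$ from \cref{lem:4.5}), continuity of $\nabla^2 \tE$ supplies uniform constants $L=\sup_k\|\tJ_k\|$ and $M=\sup_k M_k$. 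Together with $\mu_k\leq\bar\mu$ this gives $\lambda_{\max}(\tJ_k+\mu_k I)\leq L+\bar\mu$, and therefore the uniform lower bound $l_k\geq \|\tg_k\|^2/(L+\bar\mu)$.

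To finish I would argue by contradiction. If \cref{limits} failed, there would be $\varepsilon>0$ and a subsequence with $\|\tg_{k_j}\|\geq\varepsilon$, whence $l_{k_j}\geq \varepsilon^2/(L+\bar\mu)$ is bounded away from zero. By \cref{lemma:bound-t_k} every $t\in(0,t_{\max}^{k_j}]$ passes the Armijo test, so the backtracking loop terminates with $t_{k_j}\geq \rho\, t_{\max}^{k_j}=\rho\min\{2(1-\nu)l_{k_j}/(M\bar d^2),\,1\}$; using the uniform $M$ and the positive lower bound on $l_{k_j}$, this is bounded away from zero. Hence $t_{k_j}l_{k_j}$ stays bounded below, contradicting $t_k l_k\to 0$. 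Therefore $\|\tg_k\|\to 0$.

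The main obstacle is the uniform lower bound on the step sizes $t_{k_j}$ along the bad subsequence: it requires replacing the a priori segment-dependent constant $M_k$ appearing in \cref{lemma:bound-t_k} by a single constant $M$ valid for all $k$, which in turn rests on the boundedness of $\{U^k\}$ together with the uniform direction bound $\|d_k\|\leq\bar d$. Once these uniform bounds are secured, the backtracking estimate $t_k\geq \rho\, t_{\max}^k$ and the contradiction follow routinely.
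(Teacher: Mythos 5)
Your proposal is correct and follows essentially the same route as the paper's proof: monotone Armijo descent giving boundedness in a compact sublevel set, uniform bounds $M_k\leq M$ and $\lambda_{\max}(\tJ_k+\mu_k I)\leq\bar\lambda$ via compactness, a uniform step-size lower bound from \cref{lemma:bound-t_k} along any subsequence with $\|\tg_{k_j}\|\geq\varepsilon$, and a contradiction with the bounded decrease of $\tE$ (you phrase it as $\sum_k t_k l_k<\infty$, the paper as divergence of $\sum_{j\in\calI_k}\nu t_j l_j$, which are interchangeable). If anything, you are slightly more careful than the paper in recording the backtracking factor, i.e.\ $t_k\geq\rho\,t_{\max}^k$ rather than $t_k\geq t_{\max}^k$.
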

\begin{proof}
	Due to the continuity of $ \tF $, $ \tG $ in \cref{pro:unconstrain} and the
	coercive property of $ \tF $, the sublevel set $ \calM_0 = \{U:\tE (U)\leq \tE
	(U^0)\} $ is compact for any $ U^0 $. By the inequality \cref{ieq:line-search}, 
	it is easy to know $\{\tE(U^k)\}$ is a decreasing sequence, and thus $\{U^k\}\subset\calM_0$ a
	nd there exists some $\bar E$ such that $\tE(U^k)\to\bar E$ as $k\to\infty$. 
	 Moreover, from \cref{bound2} and $t_k\in(0,1]$, we know there exist a compact set $\calB_0$ 
	 such that $\{U^k+td^k|t\in(0,1]\}\subset\calB_0$ and thus there exists $ M>0 $ such that 
	\begin{align}
	\|\nabla^2 \tE(U)\|\leq M,\quad \forall\, U\in\calB_0.
	\label{Hbound}
	\end{align}
	 From the proof of \cref{lemma:bound-t_k}, we know $M_k\leq M$ for all $k$. 
	 Moreover, there exists some $\bar\lambda>0$ such that $\lambda_{\max}(\tJ_k+\mu_k I)\leq\bar\lambda$ for all $k$.
	We prove \cref{limits} by contradiction. Assume  $ \limsup\limits_{k\rightarrow+\infty} \|\tg_k \| = \varepsilon >0$ and define the index set
	\begin{align}
	\calI = \cup_{k=1}^{\infty}\calI_k :=\left\{j\in \mathbb{N}:j\leq k, \|\tg_j\|\geq \varepsilon/2 \right\}.
	\label{def:index}
	\end{align}
	Then, we know $|\calI|=\infty$ where $ |\calI|  $ denotes the number of the elements of $ \calI $. Moreover, for all $j\in\calI$, we know 
	\begin{equation}
	l_j\geq\varepsilon/2\bar\lambda\quad\mbox{ and }\quad t_{\max}^j \geq\bar t=\min\left\{\dfrac{(1-\nu)\varepsilon}{M\bar\lambda\bar d^2},1\right\}.
	\end{equation}
	Thus, $\bar t$ is a uniform lower bound for the step size $t$ at $U^j$ for $j\in\calI$, i.e. $t_j\geq t, \forall j\in\calI$, and we have
	\begin{align}\label{in3}
	\tE(U^0) - \tE(U^{k+1}) &= \sum_{j=0}^k(\tE(U^{j}) - \tE(U^{j+1})) \geq \sum_{j\in\calI_k}(\tE(U^{j}) - \tE(U^{j+1})) \\
	& \geq \sum_{j\in\calI_k}-\nu t_j\la\tg_j,d_j\ra \geq \dfrac{\nu\bar t\varepsilon}{2\bar\lambda}|\calI_k|,
	\end{align}
	Let $k\to\infty$ in $\cref{in3}$, we know $\tE(U^0)-\bar E\geq+\infty$, which leads to a contradiction.
\end{proof}

\subsection{Hybrid acceleration framework} 
\label{subsec:hybrid}

{
Many gradient based methods have a good convergent performance at the beginning,
but often show slow tail convergence near the stationary states. In this case, 
the Newton-like method is a natural choice and has a better convergence speed 
when the iteration is near the stationary states.} 
It is noted that the Hessian based method is sensitive to the initial point. 
A key step of mixing two methods is designing a proper criterion to
determine when to launch the Hessian based method. It is difficult to develop a
perfect strategy for all kinds of PFC models. In our experiments, 
we switch to the Newton-PCG algorithm when one of the following criteria is met
\begin{equation}\label{switch}
 |E(\hPhi^k) - E(\hPhi^{k-1})| < \varepsilon_1 \quad \mbox{or}\quad \|g_k - g_{k-1}\| < \varepsilon_2,
\end{equation}
where $\varepsilon_1,\varepsilon_2>0$. Our proposed hybrid accelerated framework is summarized in \cref{alg:hybrid}. 
The M method stands for a certain existing method, such as our AA-BPG method.
    	\begin{algorithm}[!pbht]
		\caption{Hybrid acceleration framework (N-M method)}
		\begin{algorithmic}[1]
			\REQUIRE$ \Phi^0,\varepsilon_1, \varepsilon_2$ and $ k = 0$.
			\WHILE { stop criterion is not satisfied}
			\IF {switching condition is satisfed}
			\STATE Perform Newton-PCG method (\cref{alg:NewtonPCG}); 
			\ELSE
			\STATE {Perform M method};
			\ENDIF
			\STATE $ k = k+1 $;
			\ENDWHILE
		\end{algorithmic}
		\label{alg:hybrid}
	\end{algorithm}

\begin{remark}
{
The idea of hybrid method provides a general framework for local acceleration. 
Our Newton-PCG methods can not only combines with the AA-BPG methods, 
but also with many existing  methods. It's worth noting that directly using 
the Newton-PCG method may converge to a bad stationary point or lead to slow convergence since the initial point is not good. 
}
\end{remark}

\section{Numerical results}\label{sec:result}

In this section, we present several numerical examples for our proposed methods and
compare the efficiency and accuracy with existing methods. 
Our approaches contain AA-BPG-2 and AA-BPG-4 (see \cref{alg:adapAPG}), and hybrid
method (see \cref{alg:NewtonPCG}), and the comparison
methods\,\cite{xu2006stability, shen2010numerical, yang2016linear, shen2019new} include the first-order temporal accuracy
semi-implicit scheme (SIS), the first-order temporal accuracy stabilized semi-implicit scheme (SSIS1), 
the second-order temporal accuracy stabilized semi-implicit scheme (SSIS2), the invariant energy
quadrature (IEQ) and scalar auxiliary variable (SAV) approaches.
All methods are employed to calculate the stationary states of finite dimensional PFC models,
including the LB model for periodic crystals and the LP model for quasicrystals.  
Note that these methods all guarantee mass conservation.
The step size $\alpha_k$ in our approaches are obtained adaptively by the linear
search technique, while the fixed step size $\alpha$ of others are chosen to guarantee the best
performance on the premise of energy dissipation.
In efficient implementation of the Newton-PCG method, 
the parameters in \cref{PCGtol} and \cref{PCG} 
are set with $\tau=0.01$, $\delta_k=0.7\max \Lambda_k^{(')}$, and
 $\mu_k$ is chosen as\,\cite{xiao2018regularized}. To show the energy
tendency obviously, we calculate a reference energy $E_s$ by choosing the invariant energy value 
as the grid size converges to 0. From our numerical tests, the reference energy has 14 significant decimal digits.
All experiments were performed on a workstation with a 3.20 GHz
CPU (i7-8700, 12 processors). All code were written by MATLAB
language without parallel implementation. 

\subsection{AA-BPG method}
\subsubsection{Periodic crystals}
For the LB model, we use three dimensional periodic crystals of the double gyroid and
the sigma phase, as shown in \cref{fig:crystals}, to
demonstrate the performance of our approach. In the hybrid method of
\cref{alg:NewtonPCG}, we choose the gradient difference {$\|g_k - g_{k-1}\| <
10^{-3}$  } as the measurement to launch the Newton-PCG algorithm.
\begin{figure}[!htbp]
	\centering
	\includegraphics[width=4.0in]{./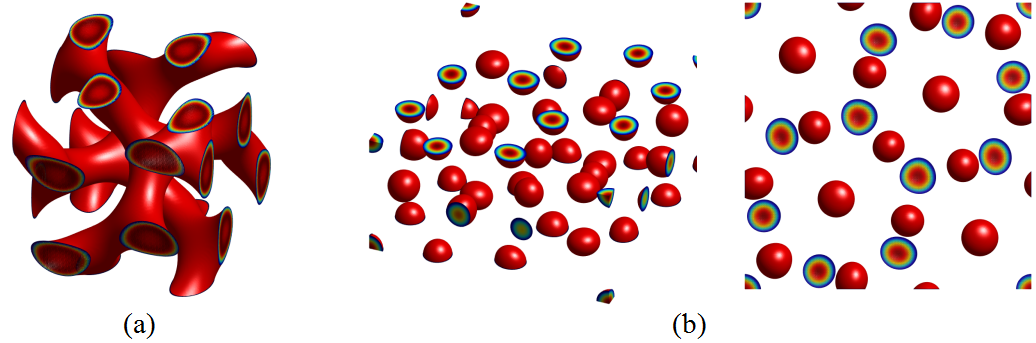}
	\caption{The stationary periodic crystals in LB model.
	(a) Double gyroid phase with $ \xi = 0.1,\tau = -2.0, \gamma = 2.0 $.
	(b) Sigma phase with $ \xi = 0.1,\tau = 0.01, \gamma = -2.0$ from two perspectives.
	}
	\label{fig:crystals}
\end{figure}

\paragraph{Double gyroid}
The double gyroid phase is a continuous network periodic phase
whose initial values can be chosen as 
\begin{align}
\phi(\br) = \sum_{\bh\in\Lambda_0^{DG}} \hphi(\bh) e^{i (\bB\bh)^\top 
\cdot \br },
\label{eq:LB:initial}
\end{align}
where initial lattice points set $\Lambda_0^{DG}\subset\bbZ^3$ only on
which the Fourier coefficients located are nonzero.
The corresponding $\Lambda_0^{DG}$ of the double gyroid phase
can be found in the Table 1 in \cite{jiang2013discovery}.
The double gyroid structure belongs to the cubic crystal system,
therefore, the $3$-order invertible matrix can be chosen as 
$\bB= (1/\sqrt{6})\bI_3$. Correspondingly, the computational
domain in physical space is $\Omega=[0,2\sqrt{6}\pi)^3$.
The parameters in LB model \cref{eq:LB} are set as 
$\xi = 0.1, \tau = -2.0, \gamma = 2.0$.  $128^3$ wavefunctions are used in these simulations.
\cref{fig:crystals} (a) shows the stationary solution of double gyroid profile.

\cref{fig:DG:comparison} gives iteration process of the above-mentioned approaches,
including the relative energy difference and the gradient changes with iteration, and
the CPU time cost. {The reference energy value $E_s = -12.94291551898271 $ is the
finally convergent value.}
As is evident from these results, 
 our AA-BPG methods are most efficient among these approaches under the premise of ensuring energy dissipation.
The AA-BPG-4 and AA-BPG-2 approaches have nearly the same numerical behaviors,
however, the AA-BPG-4 method spends a little more CPU time than AA-BPG-2 scheme does.
The reason is attributed to the cost of solving the subproblem
\cref{BProxsubprob} at each step.
For AA-BPG-2 scheme, \cref{BProxsubprob} can be solved analytically, while 
for AA-BPG-4 method, \cref{BProxsubprob} is required to numerically solve a nonlinear system.
{In \cref{fig:DG:tsteps}, we give the step sizes of AA-BPG-2/4 scheme.}
\begin{figure}[!htbp]
	\centering
	\includegraphics[width=5in]{./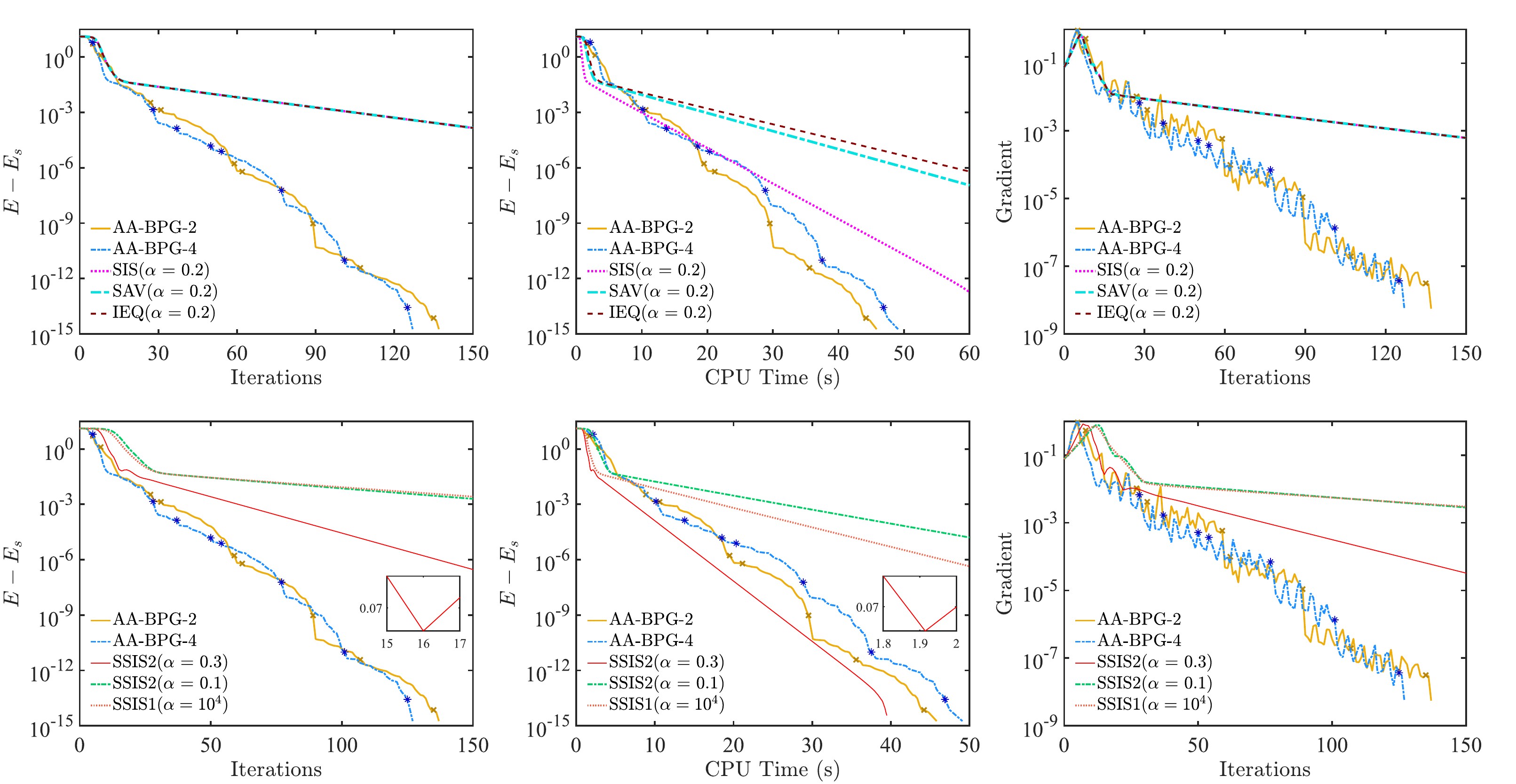}
	\caption{
	Double gyroid phase: comparisons of numerical behaviors of AA-BPG-2/4 approaches with
	\textbf{First row}: SIS, SAV and IEQ;
	\textbf{Second row}: SSIS1 and SSIS2.  
	\textbf{Left column}: Relative energy over iterations; \textbf{Middle column}:
	Relative energy over CPU times; \textbf{Right column}: Gradient over iterations;
	The blue and yellow $ \times $s mark where restarts occurred.
	}
	\label{fig:DG:comparison}
\end{figure}
\begin{figure}[!htbp]
	\centering
	\includegraphics[width=2in]{./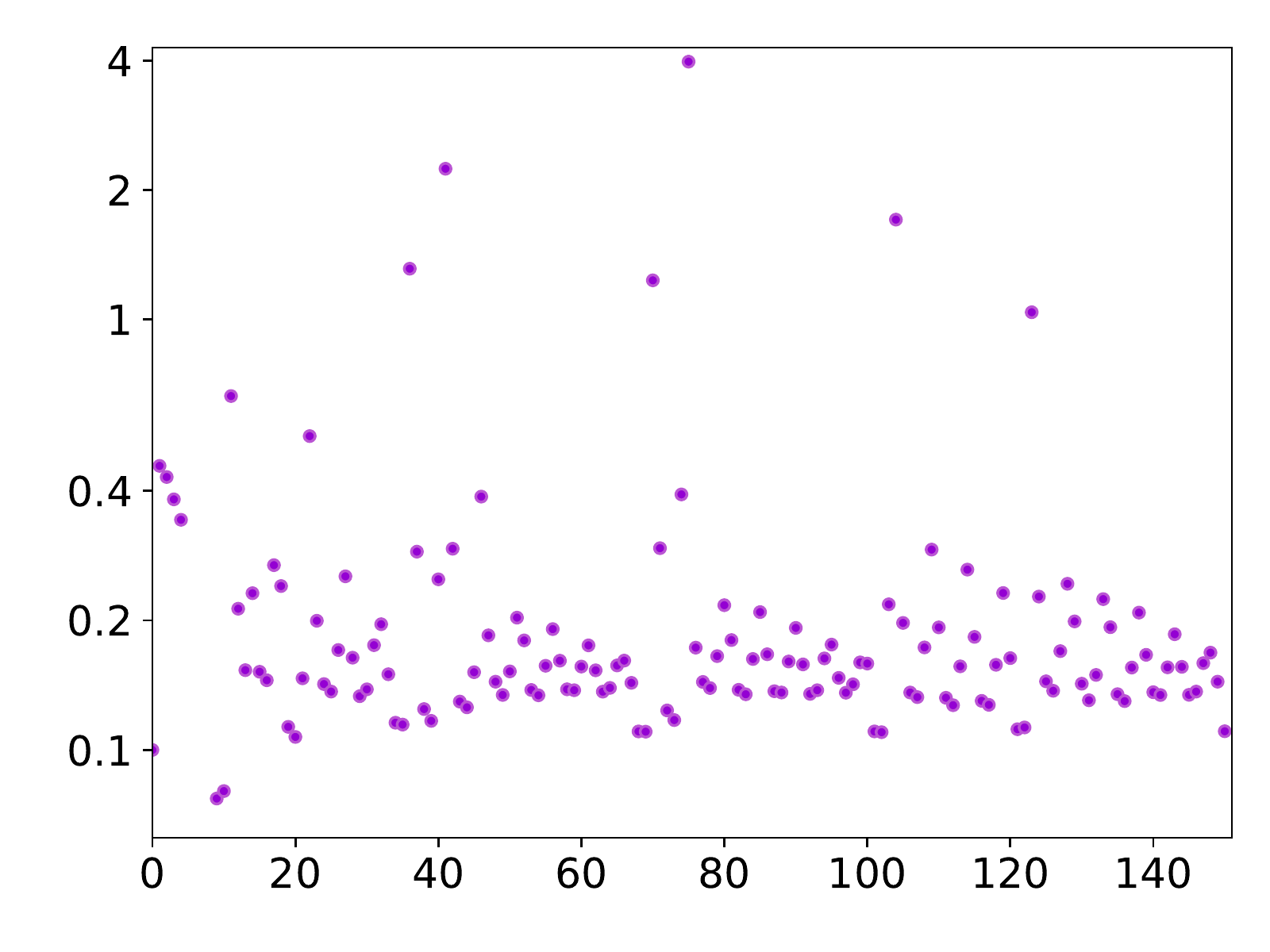}
	\includegraphics[width=2in]{./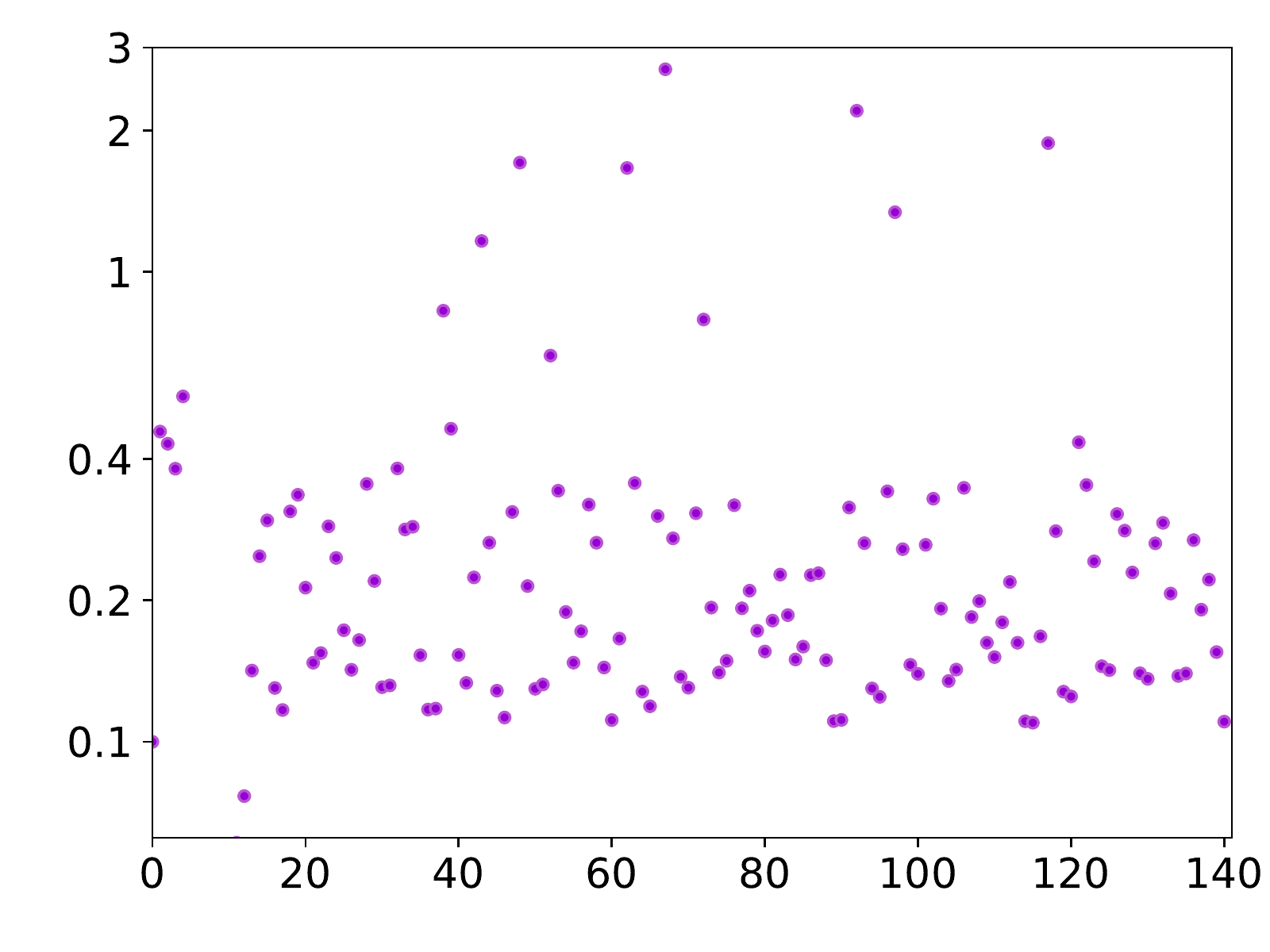}
	\caption{
	Double gyroid phase: the step sizes of \textbf{Left}: AA-BPG-2; \textbf{Right}: AA-BPG-4 approach.
	}
	\label{fig:DG:tsteps}
\end{figure}

The SIS, SAV and IEQ approaches have almost the same iterations. 
Theoretically, the convergence of the SIS is based on the assumption of global
Lipschitz constants, while the SAV method always has a modified energy dissipation 
through adding an arbitrary scaler auxiliary parameter $C$
which guarantees the boundedness of the bulk energy term.
The original energy dissipation property of the SAV method depends on the
selection of $C$. For computing the double gyroid phase,
we find that when $C$ is smaller than $10^{6}$, the SAV scheme cannot keep the
original energy dissipation property even if we adopt a small step size $0.001$. 
Further increasing $C$ to $10^{8}$, we can use a large step size $\alpha=0.2$ to
obtain the original energy dissipation feature. Note that 
there exists a gap between the modified energy and the original energy no matter what
the auxiliary parameters are.  Like in the SAV method, similar results and phenomena
have been also found in the IEQ approach.  Among the three methods, the SIS spends
the fewest CPU times. The reason is that the SAV and IEQ methods requires to solve a
subsystem at each step while the SIS does not.

The SSIS1 is an unconditionally stable scheme through imposing a stabilized term on SIS. 
Its energy law holds under the assumption of the stabilizing parameter greater than
the half of global Lipschitz constant. The step size $\alpha$ can be arbitrary
large while the effective step size has a limit. From the numerical results,
SSIS1 with $\alpha=10^4$ shows a slower convergent rate than the SIS with
$\alpha=0.2$ does.  An interesting scheme is the conditionally stable SSIS2 proposed
in \cite{shen2010numerical} that introduces a center difference stabilizing term to
guarantee the second order temporal accuracy.
From the point of continuity, the SSIS2 actually adds an inertia term onto the original
gradient flow system. The inertia term can accelerate the convergent speed but often
accompanied with some oscillations if the step size is large. 
As \cref{fig:DG:comparison} shows, when $\alpha=0.1$ the SSIS2 has almost the same
convergent speed with the SSIS1, and holds the energy dissipation property. If
increasing $\alpha$, such as $0.3$, the SSIS2 obtains an accelerated speed but with
oscillations.

\paragraph{Sigma phase}
The second periodic structure considered here is the sigma phase,
which is a spherical packed phase recently discovered in
block copolymer experiment\,\cite{lee2010discovery}, and the self-consistent mean-field
simulation\,\cite{xie2014sigma}.
The sigma phase has a larger, much more complicated tetragonal unit cell with
$30$ atoms. For such a pattern, we implement our algorithm 
on bounded computational domain $\Omega=[0, 27.7884)\times [0,
27.7884)\times [0, 14.1514)$. Correspondingly, the
initial values can be found in\,\cite{xie2014sigma}.
When computing the sigma phase, the parameters 
are set as $\xi = 1.0, \tau = 0.01, \gamma = 2.0$ and $256\times 256 \times 128 $
wavefunctions are used to discretize LB energy functional. 
The stationary morphology is shown in \cref{fig:crystals} (b).
As far as we know, it is the first time to find such complicated sigma phase in such
a simple PFC model.

\begin{figure}[htbp]
	\centering
	\includegraphics[width=5.0in]{./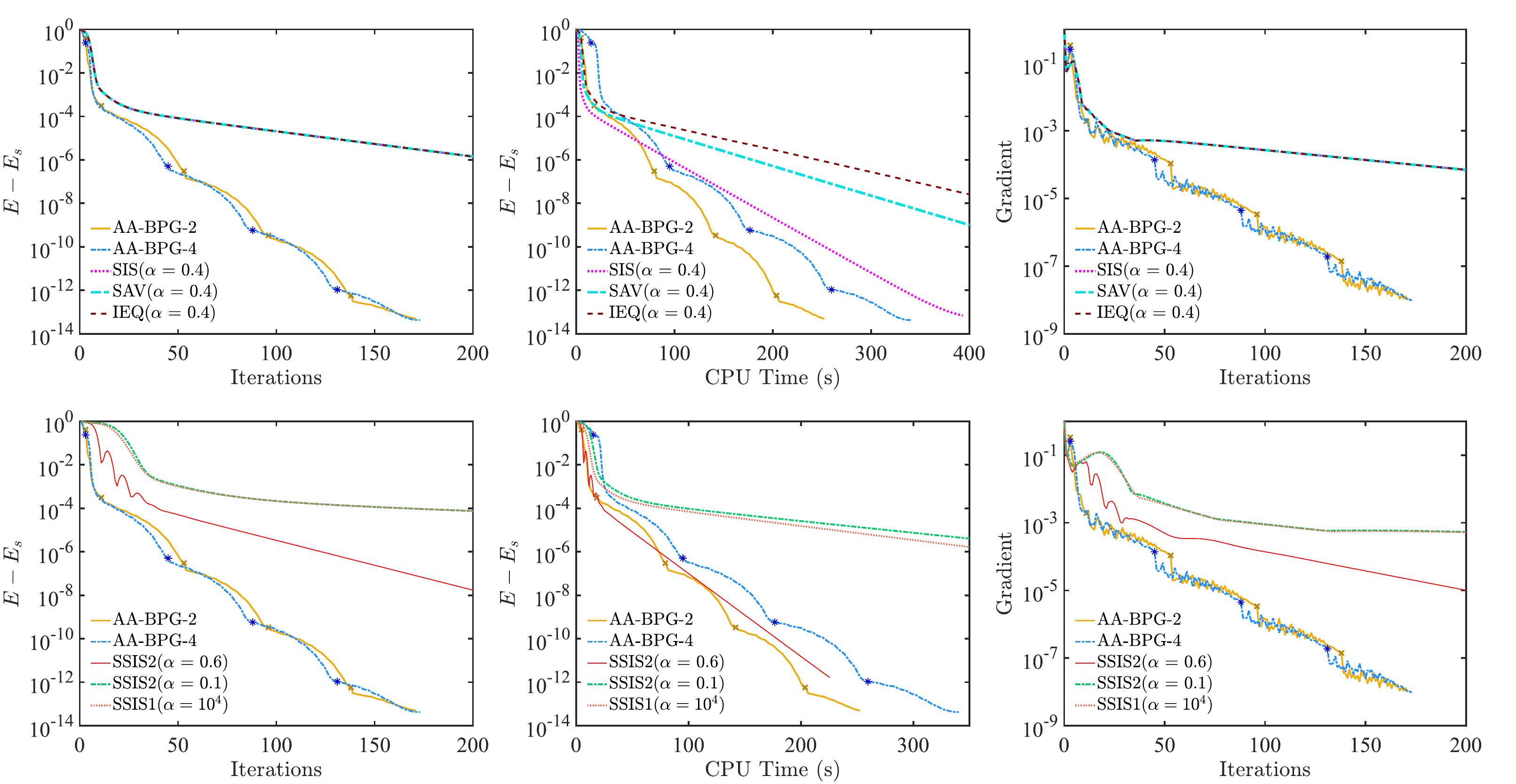}
	\caption{
	Sigma phase: comparisons of numerical behaviors of the
	AA-BPG-2/4 approaches with other numerical methods.
	The information of these plots is the same with \cref{fig:DG:comparison}.
	}
	\label{fig:Sigma:comparison}
\end{figure}

\cref{fig:Sigma:comparison} compares our proposed methods with other numerical schemes. 
We still use the reference energy value{ $ E_s = -0.93081648457086 $ }as the baseline 
to observe the relative energy changes of various numerical approaches.
Again, as shown in these results, on the premise of energy dissipation, 
the new developed  gradient based approaches demonstrate a better performance over the existing
methods in computing the sigma phase. 
Among these methods, the AA-BPG-2 method is the most efficient.

\subsubsection{Quasicrystals}
For the LP free energy \cref{eq:LP}, we take the two dimensional dodecagonal
quasicrystal as an example to examine the performance of our
proposed approach. 
For dodecagonal quasicrystals, two length scales $q_1$
and $q_2$ equal to $1$ and $2\cos(\pi/12)$, respectively.
Two dimensional dodecagonal quasicrystals can be embedded into
four dimensional periodic structures, therefore, the projection
method is carried out in four dimensional space.
The $4$-order invertible matrix $\bB$ associated with to
four dimensional periodic structure is chosen as $\bI_4$. 
The corresponding computational domain in real space is $[0,2\pi)^4$. 
The projection matrix $\calP$ in Eq.\,\cref{eq:pm} of the
dodecagonal quasicrystals is
\begin{equation}
\mathcal{P} =\left(
\begin{array}{cccc}
1 & \cos(\pi/6) & \cos(\pi/3) & 0 \\
0 & \sin(\pi/6) & \sin(\pi/3) & 1 
\end{array}
\right).
\label{eqn:DDQC:projMatrix}
\end{equation}
The initial solution is 
\begin{align}
\phi(\br) = \sum_{\bh\in\Lambda^{QC}_0} \hphi(\bh)
e^{i[(\mathcal{P}\cdot\mathbf{B}\bh)^\top\cdot\br]},
~~\br\in\mathbb{R}^2,
\end{align}
where initial lattice points set $\Lambda_0^{QC}\subset\bbZ^4$ 
can be found in the Table 3 in \cite{jiang2014numerical}
on which the Fourier coefficients $\hphi(\bh)$ located are nonzero.

\begin{figure}[htbp]
		\centering
		\includegraphics[width=3.5in]{./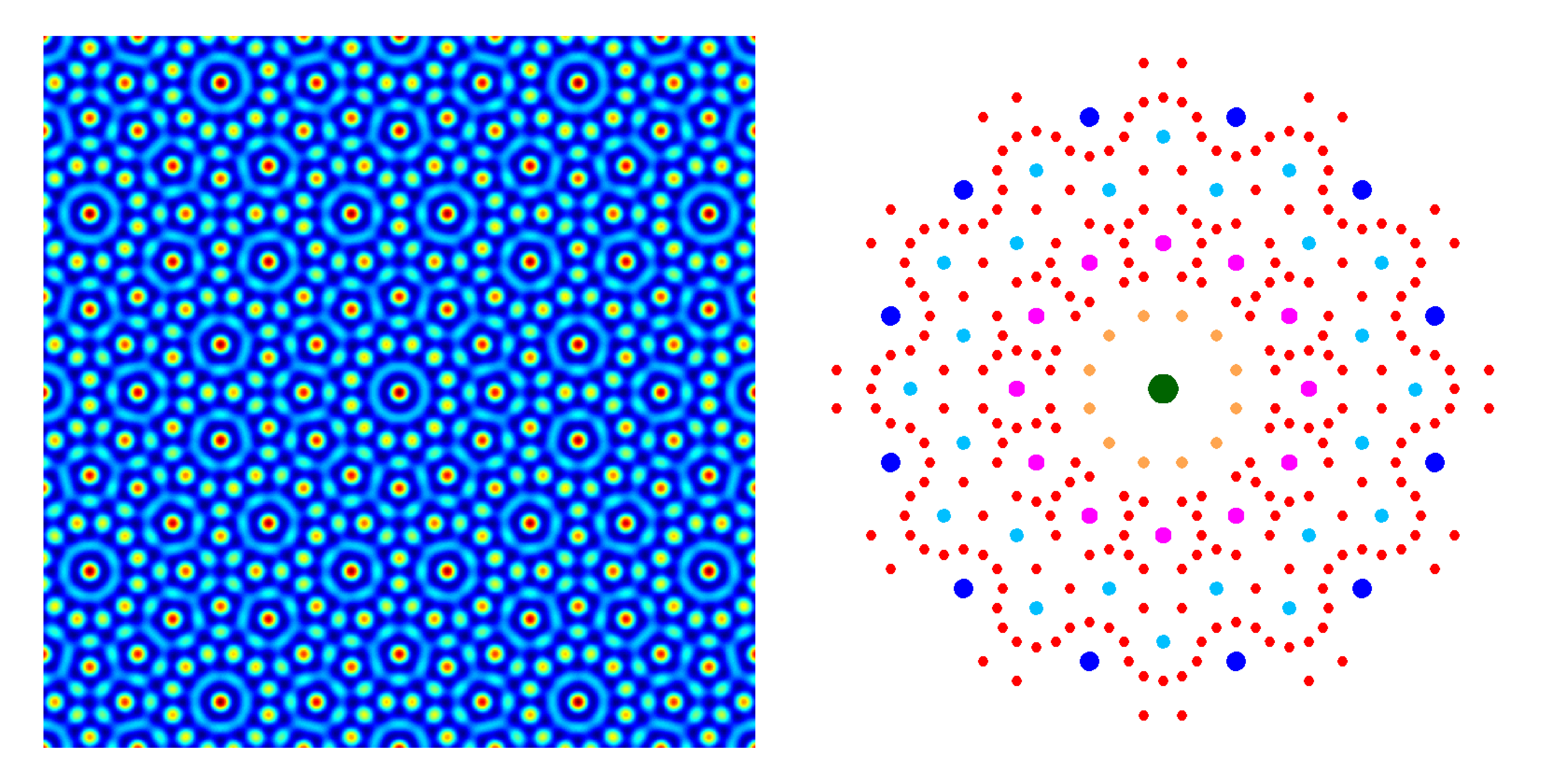}
	\caption{The stationary dodecagonal quasicrystal phase in LP model with 
	$ c = 24,\varepsilon = -6,\kappa = 6 $. \textbf{Left:} physical morphology; 
	\textbf{Right:} Fourier spectral points whose coefficient intensity is larger than 0.001}
	\label{fig:DDQCPhase}
\end{figure}

The parameters in LP models are set as $ c = 24,\varepsilon = -6,\kappa = 6 $, 
and $38^4$ wavefunctions are used to discretize LP energy functional. The convergent
stationary quasicrystal is given in \cref{fig:DDQCPhase},
including its order parameter distribution and Fourier spectrum. 
The numerical behavior of different approaches can be found in \cref{fig:DDQC:comparison}.
To better observe the change tendency, we use the convergent energy value $ E_{s} =
-15.97486323815640 $ as a baseline to show the relative energy changes against with iterations. 
We find again that our proposed approaches are more efficient than others.

\begin{figure}[htbp]
	\centering
	\includegraphics[width=5.0in]{./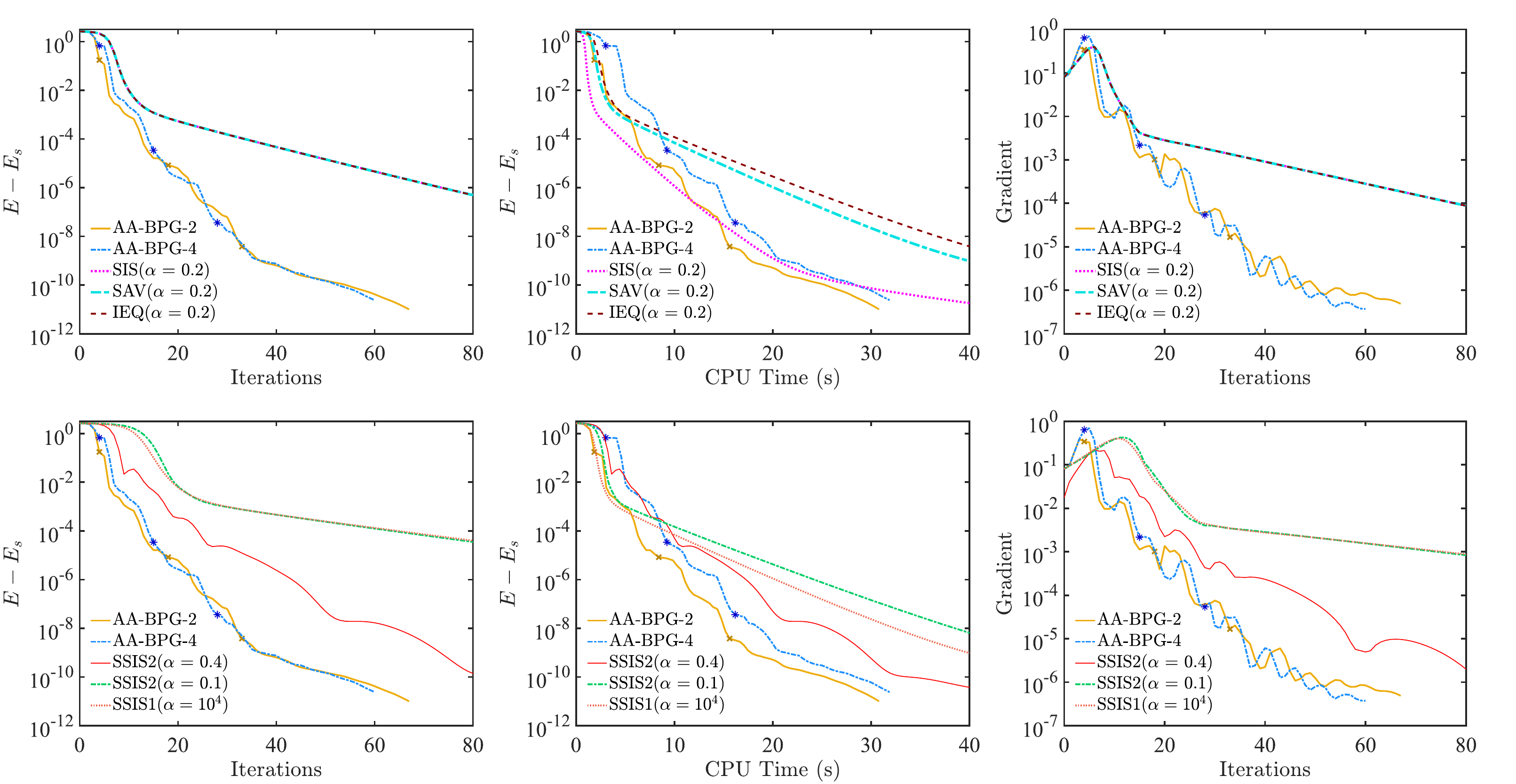}
	\caption{
	Dodecagonal quasicrystal: comparisons of numerical behaviors of the 
	AA-BPG-2/4 approaches with other numerical methods.
	The details of these images are the same with \cref{fig:DG:comparison}.
	}
	\label{fig:DDQC:comparison}
\end{figure}

\subsection{Local acceleration}
\label{subsec:acceleration}
{
The motivation of the hybrid method is providing a framework to
locally accelerate the existing methods.
Certainly, the Newton-PCG method is suitable for all alternative methods mentioned above. 
In the \cref{fig:rate}, we give a detailed comparison of our Newton-PCG method applied to alternative methods. 
For method M, the acceleration ratio is defined as 
\begin{equation}
    \text{Acceleration ratio} := \frac{\text{CPU times of original method M} }{\text{CPU times of hybrid method N-M}}.
\end{equation}
All numerical parameters, such as step size, of all alternative approaches are 
keep the same as former to guarantee the best performance. 
To launch the Newton-PCG method, we choose the gradient difference {$\|g_k - g_{k-1}\| <
10^{-3}$  } in computing crystal and energy difference $|E(\hPhi^k) - E(\hPhi^{k-1})| < 10^{-4}$ 
in computing the quasicrystal as the measurement. As shown in our numerical results, 
our Hessian based methods can accelerate all the existing methods with acceleration ratio 
ranging from 2-14. After using the proposed local acceleration, we observe that 
all the compared approaches have similar performance in terms of the CPU time. Moreover, 
it is noted that the acceleration ratio for the AA-BPG-2 method is the smallest one as 
is shows the best performance without coupling the Newton-PCG method.
}
\begin{figure}[htbp]
 	\centering
 	\includegraphics[scale=0.55]{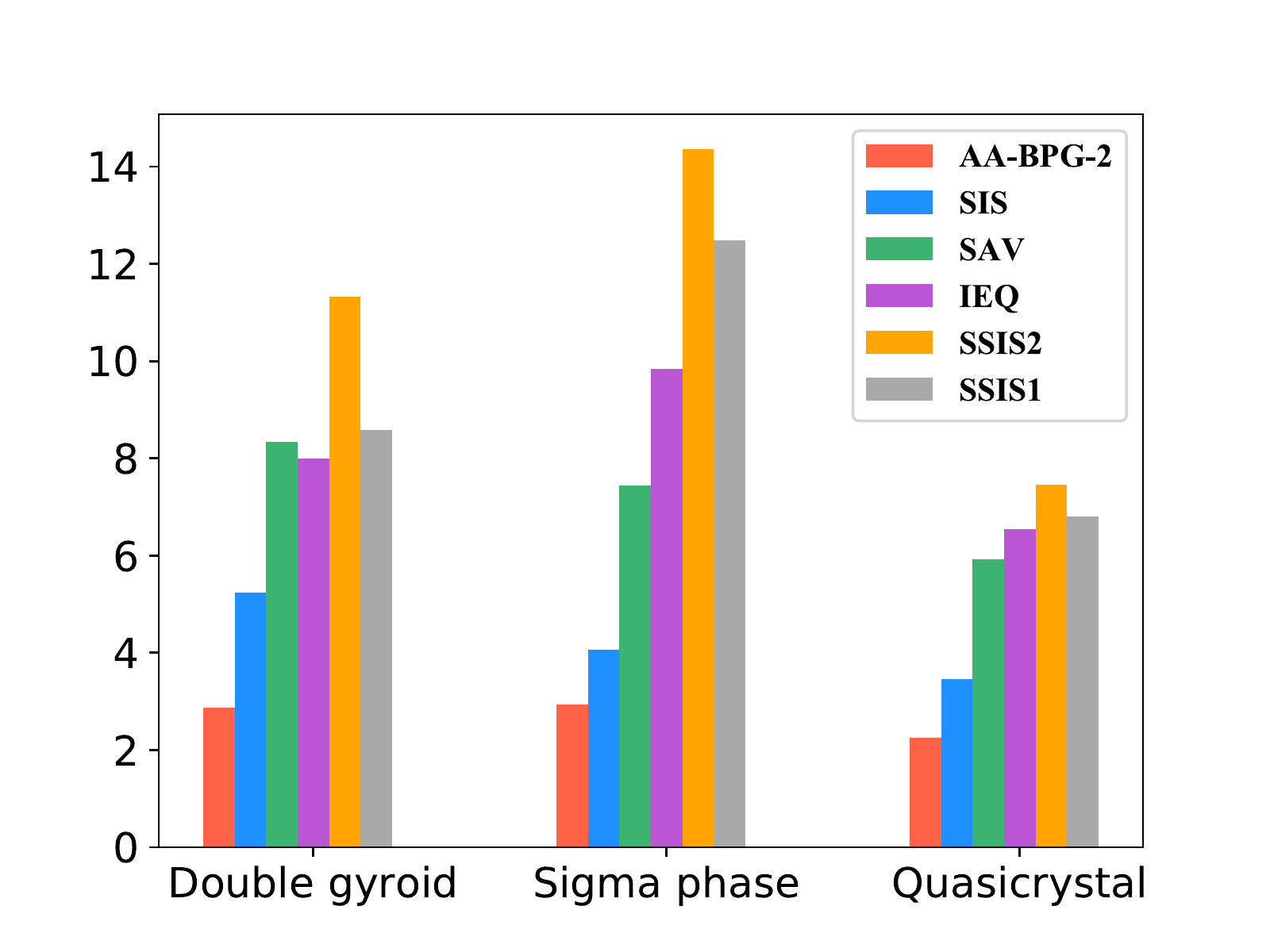}
 	\caption{The acceleration ratio of applying Newton-PCG algorithm to existing methods compared 
 	with original ones for computing periodic crystals and quasicrystals}
 	\label{fig:rate}
\end{figure}

\section{Conclusion}\label{sec:conclusion}

In this paper, efficient and robust computational approaches have been proposed to 
find the stationary states of PFC models. Instead of formulating the energy minimization 
as a gradient flow, we applied the modern optimization methods directly on the discretized 
energy with mass conservation and energy dissipation. Moreover, the AA-BPG methods 
with suitable choice of $ h $ overcome the global Lipschitz constant requirement 
in theoretical analysis and the step sizes are adaptively obtained by line search technique. 
We also propose a practical Newton-PCG method and introduce a hybrid framework to further 
accelerate the local convergence of gradient based methods. Extensive results in computing 
periodic crystals and quasicrystals show their advantages in terms of computation efficiency. 
{Thus, it motivates us to continue finding the deep relationship between the gradient flow and the 
optimization, applying our methods to many related problems, such as SPFC, MPFC models, 
and extending to more spatial discretization methods.}

%


\section*{Appendix A: Proof of \cref{Thm:BAPGconvergence}}
	Before prove the convergent property, we first present a useful lemma for our analysis.
	\begin{lemma}[Uniformized Kurdyka-Lojasiewicz property
		\cite{bolte2014proximal}] Let $\Omega$ be a compact set and
		$E$ is constant on $\Omega$. Then, there exist $\epsilon>0$, $\eta>0$, and $\psi\in\Psi_\eta$ 
		such that for all $\bar u\in\Omega$ and all $u\in\Gamma_\eta(\bar u,\epsilon)$, 
		one has,
		\begin{equation}\label{UKL}
		\psi^{'}(E(u)-E(\bar u))\dist(\vzero,\partial E(u))\geq 1,
		\end{equation}
		where $\Psi_\eta =\{\psi\in C[0,\eta)\cap C^1(0,\eta), \psi \text{ is concave}, \psi(0)=0, \psi^{'}>0 \text{ on } (0,\eta)\}$ and $\Gamma_\eta(x,\epsilon) = \{y|\|x-y\|\leq \epsilon, E(x)<E(y)<E(x)+\eta\}$.
		\label{lemma:ukl}
	\end{lemma}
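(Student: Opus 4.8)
The plan is to prove the uniformized property by a compactness argument that collapses the uncountable family of pointwise KL inequalities into a finite one, and then to fuse the finitely many desingularizing functions into a single $\psi$. First I would exploit the hypothesis that $E$ is constant on $\Omega$: writing $E\equiv\zeta$ on $\Omega$, every point $\bar x\in\Omega$ carries the KL inequality of \cref{assum3} anchored at the \emph{same} level $\zeta$. Applying that property at each $\bar x\in\Omega$ yields a radius $\epsilon_{\bar x}>0$, a height $\eta_{\bar x}>0$ and a function $\psi_{\bar x}\in\Psi_{\eta_{\bar x}}$ for which $\psi_{\bar x}'(E(u)-\zeta)\,\dist(\vzero,\partial E(u))\ge 1$ holds on the local set $\{u:\|u-\bar x\|<\epsilon_{\bar x},\ \zeta<E(u)<\zeta+\eta_{\bar x}\}$. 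I would note at the outset that this implicitly uses $\Omega\subseteq\dom\partial E$ so that the KL inequality is available at every $\bar x\in\Omega$.

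Next I would cover the compact set $\Omega$ by the open balls $\{B(\bar x,\epsilon_{\bar x}/2)\}_{\bar x\in\Omega}$ and extract a finite subcover indexed by points $x_1,\dots,x_p$ with associated data $\epsilon_i,\eta_i,\psi_i$. I would then fix the uniform constants $\epsilon:=\tfrac12\min_i\epsilon_i$ and $\eta:=\min_i\eta_i$, and — this is the crux of the construction — define the uniform desingularizing function $\psi:=\sum_{i=1}^p\psi_i$. Since each $\psi_i$ is concave, continuous on $[0,\eta_i)\supseteq[0,\eta)$, $C^1$ on $(0,\eta_i)\supseteq(0,\eta)$, vanishes at $0$, and has strictly positive derivative, the finite sum inherits all these properties and hence $\psi\in\Psi_\eta$.

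The verification ties the pieces together. Fixing $\bar u\in\Omega$ and $u\in\Gamma_\eta(\bar u,\epsilon)$, the finite cover supplies an index $j$ with $\bar u\in B(x_j,\epsilon_j/2)$; the triangle inequality $\|u-x_j\|\le\|u-\bar u\|+\|\bar u-x_j\|<\epsilon+\epsilon_j/2\le\epsilon_j$, combined with $\eta\le\eta_j$ and $E(x_j)=\zeta$, places $u$ inside the local KL set of $x_j$, so $\psi_j'(E(u)-\zeta)\,\dist(\vzero,\partial E(u))\ge 1$. Because every summand has positive derivative, $\psi'(s)=\sum_i\psi_i'(s)\ge\psi_j'(s)$ on $(0,\eta)$, and since $\dist(\vzero,\partial E(u))\ge 0$, multiplying this bound through gives $\psi'(E(u)-\zeta)\,\dist(\vzero,\partial E(u))\ge\psi_j'(E(u)-\zeta)\,\dist(\vzero,\partial E(u))\ge 1$, which is the desired uniform inequality.

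I expect the main obstacle to be the fusion of the $\psi_i$ rather than the covering itself: one must confirm that summation preserves membership in $\Psi_\eta$ (in particular the domain compatibility $\eta\le\eta_i$ so each $\psi_i$ is defined on $[0,\eta)$) and, crucially, that $\psi'\ge\psi_j'$ pointwise, so that replacing $\psi_j$ by $\psi$ only strengthens the estimate. The half-radius device ($\epsilon=\tfrac12\min_i\epsilon_i$ against a cover by balls of radius $\epsilon_i/2$) is precisely what makes the triangle-inequality step close, and I would single it out as the one place where the constants must be chosen with care.
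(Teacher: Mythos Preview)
Your argument is correct and is precisely the standard compactness-and-sum proof of the uniformized KL property from Bolte--Sabach--Teboulle~\cite{bolte2014proximal}. Note, however, that the paper does not supply its own proof of this lemma: it is stated in Appendix~A as a cited auxiliary result and used directly in the proof of \cref{Thm:BAPGSeqconver}, so there is nothing in the paper to compare against beyond the reference itself.
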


	Now, we show the proof of \cref{Thm:BAPGconvergence}, which is similar to the framework in\,\cite{Heinz}.
	\begin{proof}
	Let $S(x^0)$ be the set
	of limiting points of the sequence $\{x^k\}_{k=0}^{\infty}$ starting from
	$x^0$. By the boundedness of $\{x^k\}_{k=0}^{\infty}$ and the fact
	$S(x^0)=\cap_{q\in\mathbb{N}}\overline{\cup_{k\geq
			q}\{x^k\}}$, it follows that $S(x^0)$ is a
	non-empty and compact set. Moreover, from \cref{ABPGsuffDec}, we
	know $E(x)$ is constant on $S(x^0)$, denoted by
	$E^*$. If there exists some $k_0$ such that
	$E(x^{k_0})=E^*$, then we have $E(x^k)=E^*$ for all
	$k\geq k_0$ which is from \cref{ABPGsuffDec}. In the following
	proof, we assume that $E(x^k)>E^*$ for all $k$. Therefore,
	$\forall \epsilon,\eta>0$, there exists some $\ell>0$ such
	that for all $k>\ell$, we have
	$\mathrm{dist}(S(x^0),x^k)\leq \epsilon \text{ and
	} E^*<E(x^k)<E^*+\eta$, i.e. 
	\begin{equation}\label{APP:EQ}
	x\in\Gamma_{\eta}(x^*,\epsilon) \quad \text{for all }\quad x^*\in S(x^0).
	\end{equation}
	Applying \cref{lemma:ukl} for all $k>\ell$ we have
	\begin{equation*}
	\psi^{'}(E(x^k)-E^*)\dist(\vzero, E(x^k))\geq 1.
	\end{equation*}
	Form \cref{gradbound}, it implies
	\begin{equation}\label{KL1}
	\psi^{'}(E(x^k)-E^*)\geq
	\frac{1}{c_1(\|x^k-x^{k-1}\|+\bar{w}\|x^{k-1}-x^{k-2}\|)}.
	\end{equation}
	By the convexity of $\psi$, we have
	\begin{equation}\label{concave}
	\psi(E(x^k)-E^*) - \psi(E(x^{k+1}) -E^*)
	\geq \psi^{'}(E(x^k)-E^*)(E(x^k)-E(x^{k+1})).
	\end{equation}
	Define $\Delta_{p,q}=\psi(E(x^p)-E^*) - \psi(E(x^q) -E^*)$ and $C=(1+\bar{w})c_1/c_0>0$.
	Together with \cref{KL1}, \cref{concave} and \cref{ABPGsuffDec}, we have for all $k>\ell$
	\begin{equation}
	\Delta_{k,k+1}\geq \frac{c_0\|x^{k+1}-x^k\|^2}{c_1(\|x^k-x^{k-1}\|+\bar{w}\|x^{k-1}-x^{k-2}\|)}\geq \frac{\|x^{k+1}-x^k\|^2}{C(\|x^k-x^{k-1}\|+\|x^{k-1}-x^{k-2}\|)}.
	\end{equation}
	Therefore,
	\begin{equation}\label{GeoIneq}
	2\|x^{k+1}-x^{k}\|\leq \dfrac{1}{2}(\|x^k-x^{k-1}\|+\|x^{k-1}-x^{k-2}\|) + 2C\Delta_{k,k+1},
	\end{equation}
	which is from the geometric inequality. For any $k>\ell$, summing up
	\cref{GeoIneq} for $i=\ell+1,\ldots,k$, it implies
	\begin{equation*}
	\begin{split}
	& 2\sum_{i=\ell+1}^k \|x^{i+1}-x^{i}\|\leq {\dfrac{1}{2}}\sum_{i=\ell+1}^k (\|x^i-x^{i-1}\|+\|x^{i-1}-x^{i-2}\|)+{2C}\sum_{i=\ell+1}^k\Delta_{i,i+1}\\
	\leq & \sum_{i=\ell+1}^k \|x^{i+1}-x^i\| + \|x^{\ell+1}-x^\ell\|+ \|x^\ell-x^{\ell-1}\|+2C\Delta_{\ell+1,k+1},
	\end{split}
	\end{equation*}
	where the last inequality is from the fact that $\Delta_{p,q}+\Delta_{q,r}=\Delta_{p,r}$ for all $p,q,r\in\mathbb{N}$. Since $\psi\geq 0$, for any $k>\ell$ and
	we have
	\begin{equation}\label{boundGlobalConv}
	\begin{split}
	\sum_{i=\ell+1}^k\| x^{i+1} -x^i\| \leq
	\|x^{\ell+1}-x^\ell\|+ \|x^\ell-x^{\ell-1}\|+2C\psi(E(x^{\ell+1})-E^*).
	\end{split}
	\end{equation}
	This easily implies that
	$\sum_{k=1}^{\infty}\|x^{k+1}-x^k\|<\infty$. Together with \cref{Thm:BAPGconvergence}, we obtain
	\begin{equation*}
	\lim\limits_{k\rightarrow+\infty}x^k=x^*,\quad \vzero \in\partial E(x^{*})= 0.
	\end{equation*}
\end{proof}

\section*{Appendix B: Proof of \cref{Them:innerprod}}
	The proof is similar to the framework in\,\cite{zhao2010newton}. Let $ x^* $ be the exact solution 
	and $ e_i = x^*- x^i $ for all $ i $. We first prove some important properties of \cref{alg:PCG}.
	
	\textbf{Property I: $ r_i = Ax^i - b $.}
	From the step 4 of  \cref{alg:PCG}, we have $ \alpha_iAp_{i-1} = Ax^{i} - Ax^{i-1} . $ Then,
	\begin{align*}
	r_{i} &= r_{i-1} + \alpha_i Ap_{i-1}  = r_0 + \sum_{j=1}^i \alpha_jAp_{j-1} = -b + \sum_{j=1}^i \alpha_jAp_{j-1} \\
	&= -b + \sum_{j=1}^i(Ax^j - Ax^{j-1})  = -b + Ax^{i} - Ax^0 = Ax^i  - b.			
	\end{align*}
	
	\textbf{Property II: $ \langle p_i, b\rangle  = \|r_i\|_{M^{-1}}^2~(i = 0,1,2,\cdots)$.}
	By the formula (5.40) in \,\cite{NumericalOpt}, we know that
	$ 	\la r_i,r_j\ra_{M^{-1}} = 0 ~(i\neq j)$. Together with the definition of $ \beta_i $ and $ p_i $ in \cref{alg:PCG}, we get 
	\begin{align}
	\begin{split}
	\la p_0, b\ra &= \la p_0,-r_0 \ra =\la M^{-1}r_0, r_0\ra =  \|r_0\|_{M^{-1}}^2,\\
	\la p_i, b\ra &= \la p_i, -r_0\ra = \la M^{-1}r_i, r_0\ra + \beta_i\la p_{i-1}, -r_0\ra = \beta_i\la p_{i-1}, -r_0\ra = \left(\prod_{j=1}^i\beta_i\right)\la p_{0}, -r_0\ra \\&=\left(\prod_{j=1}^i\beta_i\right) \| r_0\|_{M^{-1}}^2
	= \left(\prod_{j=2}^i\beta_i\right)\|r_1\|_{M^{-1}}^2 =  \|r_i\|_{M^{-1}}^2,\quad \forall i=1,2,\cdots,
	\end{split}	
	\label{eq:4.9}
	\end{align}
	
	\textbf{Property III: $ \|e_i\|_A \geq \|e_{i+1}\|_A$.} According to the iteration of $ p_i $, on has
	\begin{align}
	\begin{split}
	\la p_i, -r_{i+1} \ra&= \la -M^{-1}r_i + \beta_ip_{i-1}, -r_{i+1}\ra =0+ \beta_i\la p_{i-1}, -r_{i+1} \ra 
	\\&= \left(\prod_{j=1}^i\beta_j\right)\la p_0, -r_{i+1} \ra = \left(\prod_{j=1}^i\beta_j\right)\la M^{-1}r_0, r_{i+1} \ra = 0.
	\end{split}
	\label{eq:zeroprod}
	\end{align}
	By the property I, we have $  Ae_{i+1} = A(x^* -x^{i+1})= b - Ax^{i+1}  = -r_{i+1} $, which implies $ \la p_i, Ae_{i+1} \ra = 0 $.
	Using the fact that $ e_i = e_{i+1} + x^{i+1} - x^{i} = e_{i+1} +\alpha_{i+1}p_i $, the following equation holds for all $ i\geq 0 $:
	\begin{align}
	\begin{split}
	\|e_i\|_{A}^2 &= \|e_{i+1} +\alpha_{i+1}p_i\|_A^2 =
	\|e_{i+1}\|_{A}^2 + 2\alpha_{i+1}\la p_i, Ae_{i+1} \ra + \|\alpha_{i+1}p_i\|_{A}^2\\
	&=\|e_{i+1}\|_{A}^2+ \alpha_{i+1}^2\|p_i\|_{A}^2 \geq \|e_{i+1}\|_{A}^2.
	\end{split}	
	\label{eq:4.10}	
	\end{align}
	
	\textbf{Property IV: $ \la x^i, b\ra \geq \la x^{i-1}, b\ra.$}
	The definition of $ \alpha_{j} $ gives	$ \|r_{j-1}\|_{M^{-1}}^2 = \alpha_j\|p_{j-1}\|_{A}^2 $. Together with \cref{eq:4.9} and  \cref{eq:4.10}, we have
	\begin{align}
	\begin{split}
	\la x^i, b\ra& = \la x^{i-1}, b\ra + \la \alpha_ip_{i-1}, b\ra = \la x^0, b\ra + \sum_{j=1}^i\la \alpha_jp_{j-1}, b\ra = \sum_{j=1}^i \alpha_j\|r_{j-1}\|_{M^{-1}}^2\\ &= \sum_{j=1}^i \alpha_j^2\|p_{j-1}\|_{A}^2 = \sum_{j=1}^i(\|e_{j-1}\|_{A}^2 - \|e_{j}\|_{A}^2)= \|e_0\|_{A}^2 - \|e_i\|_{A}^2,
	\end{split}
	\label{eq:4.11}
	\end{align}
	which implies  $ \la x^i, b\ra \geq \la x^{i-1}, b\ra$ by the monotonicity of $ \|e_i\|_A^2 $.
	
	Now, we can prove the main result. By using the definition of $ p_0 $ and $ \alpha_1 $, we obtain
	\begin{align}
	\begin{split}
	\dfrac{\la x^i,b\ra}{\|b\|^2} &\geq \dfrac{\la x^1,b\ra}{\|b\|^2} = \dfrac{\la x^0 + \alpha_1 p_0,b\ra}{\|b\|^2} =  \alpha_1\dfrac{\la  p_0,b\ra}{\|b\|^2} = \dfrac{\la r_0, p_0\ra}{\la p_0, A p_0 \ra}\dfrac{\la  M^{-1}b,b\ra}{\|b\|^2} \\
	&=  \dfrac{\la Mp_0, p_0\ra}{\la p_0, A p_0 \ra}\dfrac{\la  M^{-1}b,b\ra}{\|b\|^2} \geq \dfrac{\la Mp_0, p_0\ra}{\la p_0, A p_0 \ra}\dfrac{1}{\lambda_{\max}(M)}.
	\end{split}
	\label{4.18}
	\end{align}
	Since $ M $ is positive,  we know $ M = M^{1/2}M^{1/2} $, where $ M^{1/2} $ is still positive. As a result, 
	\begin{align}
	\|M\| = \lambda_{max}(M)  =\lambda_{\max}(M^{1/2}M^{1/2}) = \lambda^2_{\max}(M^{1/2}) = \|M^{1/2}\|^2.
	\label{eq:6.8}
	\end{align}
	Let $ y = M^{1/2}p_0 $, we get
	\begin{align}
	\begin{split}
	\dfrac{\la Mp_0, p_0\ra}{\la p_0, A p_0 \ra} &= \dfrac{\la y,y\ra}{\la y, M^{-1/2}AM^{-1/2}y\ra} \geq \dfrac{1}{\lambda_{\max}(M^{-1/2}AM^{-1/2})} = \dfrac{1}{\|M^{-1/2}AM^{-1/2}\|}\\
	&\geq \dfrac{1}{\|M^{-1/2}\|\cdot \|A\|\cdot \|M^{-1/2}\|} = \dfrac{\|M\|}{ \|A\|} = \dfrac{\lambda_{\max}(M)}{\lambda_{\max}(A)}.
	\end{split}
	\label{4.19}
	\end{align}
	where the second inequality takes the fact that $ \|AB\| \leq \|A\| \cdot \|B\| $. Together with  \cref{4.18}, we get
	\begin{align}
	\dfrac{\la x^i,b\ra}{\|b\|^2}   \geq \dfrac{\la Mp_0, p_0\ra}{\la p_0, A p_0 \ra}\dfrac{1}{\lambda_{\max}(M)} \geq \dfrac{\lambda_{\max}(M)}{\lambda_{\max}(A)}\dfrac{1}{\lambda_{\max}(M)} = \dfrac{1}{\lambda_{\max}(A)}.
	\end{align}
	To verify another inequality, we use \cref{eq:4.11} and the fact that $ e_0 = x^* - x^0 = -A^{-1}b $, 
	\begin{align*}
	\dfrac{\la x^i,b\ra}{\|b\|^2}\ = \dfrac{\|e_0\|_A^2 - \|e_i\|_A^2}{\|b\|^2} \leq \dfrac{\|e_0\|_{A}^2}{\|b\|^2} = \dfrac{\|A^{-1}b \|_{A}^2}{\|b\|^2} = \dfrac{\la b, A^{-1}b\ra}{\|b\|^2} \leq \dfrac{1}{\lambda_{\min}(A)}.
	\end{align*}

\end{document}